\newtheorem{theorem}{Theorem}[section]
\newtheorem{lemma}[theorem]{Lemma}
\newtheorem{corollary}[theorem]{Corollary}
\newtheorem{proposition}[theorem]{Proposition}
\newtheorem{theoremletter}{Theorem}
 \theoremstyle{definition}
 \newtheorem{definition}[theorem]{Definition}
 \newtheorem{example}[theorem]{Example}
  \newtheorem*{example*}{Example}
\numberwithin{equation}{section}
\newcommand {\N}{\mathbb{N}} 
\newcommand {\Z}{\mathbb{Z}}
\newcommand{\LL}{\mathcal{L}}
\DeclareMathOperator{\Ker}{Ker}
\DeclareMathOperator{\End}{End}
\DeclareMathOperator{\im}{Im}
\DeclareMathOperator{\Id}{Id}
\begin{document}
\title[Linear NUCA: duality and dynamics]{On linear non-uniform cellular automata: duality and dynamics}    
\author[Xuan Kien Phung]{Xuan Kien Phung}
\address{Département d'informatique et de recherche opérationnelle,  Université de Montréal, Montréal, Québec, H3T 1J4, Canada.}
\email{phungxuankien1@gmail.com}   
\subjclass[2020]{05C25, 20F69, 37B10, 37B15, 37B51, 68Q80}
\keywords{duality, non-uniform cellular automata}

\begin{abstract}
For linear non-uniform cellular automata (NUCA) over an arbitrary universe, we introduce and investigate their dual linear NUCA. Generalizing results for linear CA, we show that dynamical properties namely pre-injectivity, resp. injectivity, resp. stably injectivity, resp. invertibility of a linear NUCA is equivalent to  surjectivity, resp. post-surjectivity, resp. stably post-surjectivity, resp. invertibility of the dual linear NUCA. However,  while bijectivity is a dual property for linear CA, it  is no longer the case for linear NUCA.  
We prove that for linear NUCA, 
 stable injectivity and stable post-surjectivity are precisely characterized respectively by left invertibility and right invertibility and that a linear NUCA is invertible if and only if it is pre-injective and stably post-surjective. Moreover, we show  that linear NUCA satisfy the important shadowing property. 
 Applications on the dual surjunctivity are also obtained. 
\end{abstract}
\date{\today}
\maketitle
  
\setcounter{tocdepth}{1}

\section{Introduction} 
Let us recall some basic notions of symbolic dynamics. 
For a discrete set $A$ and a group $G$, a \emph{configuration} $c \in A^G$ is simply a map $c \colon G \to A$.  Two configurations $x,y  \in A^G$ are \emph{asymptotic} if  $x\vert_{G \setminus E}=y\vert_{G \setminus E}$ for some finite subset $E \subset G$.  
The \emph{Bernoulli shift} action $G \times A^G \to A^G$ is defined by $(g,x) \mapsto g x$, 
where $(gx)(h) =  x(g^{-1}h)$ for  $g,h \in G$ and $x \in A^G$. 
We equip the \emph{full shift} $A^G$ with the \emph{prodiscrete topology}. 
For $x \in A^G$, let $\Sigma(x) = \overline{\{gx \colon g \in G\}} \subset A^G$, i.e., the smallest subshift which contains $x$. 
Following the idea of von Neumann and Ulam  \cite{neumann}, a CA over a group $G$ (the \emph{universe}) and a set $A$ (the \emph{alphabet})  is  a self-map $A^G \righttoleftarrow$ which is $G$-equivariant and uniformly continuous (cf.~\cite{hedlund-csc}, \cite{hedlund}). 
When  different cells are allowed to admit different local transition maps, we have the more general notion of \emph{non-uniform CA} (NUCA) (cf. \cite[Definition~1.1]{phung-tcs}, \cite{Den-12a}, \cite{Den-12b}).   

\begin{definition}
\label{d:most-general-def-asyn-ca}
Let $G$ be a group and let  $A$ be a set. Let $M \subset G$ be a subset and let $S = A^{A^M}$ be the set of all maps $A^M \to A$. Given $s \in S^G$, the NUCA $\sigma_s \colon A^G \to A^G$ is defined for all $x \in A^G$ and $g \in G$ by the formula 
\begin{equation*}
\sigma_s(x)(g)=  
    s(g)((g^{-1}x)  
	\vert_M). 
 \end{equation*}
 \end{definition} 
\par 
The set $M$ is called a \emph{memory} and $s \in S^G$  the \textit{configuration of local defining maps} of $\sigma_s$. It follows that  every CA is a NUCA with finite memory and constant configuration of local defining maps. 
\par 
Following \cite{phung-tcs}, we say that $\sigma_s$ is  \emph{stably injective} if  $\sigma_p$ is injective  for every $p \in \Sigma(s)$ and that $\sigma_s$ is \emph{invertible} if it is bijective and the inverse map  $\sigma_s^{-1}$ is a NUCA with \emph{finite} memory. 
Moreover, $\sigma_s$ is \emph{left-invertible}, resp. \emph{right invertible}, if there exists a NUCA with \textit{finite} memory $\tau \colon A^G \to A^G$ such that $\tau \circ \sigma_s= \Id$, resp. $\sigma_s\circ \tau= \Id$.  The NUCA $\sigma_s$ is \emph{pre-injective} if $\sigma_s(x) = \sigma_s(y)$ implies $x= y$ whenever $x, y \in A^G$ are asymptotic, and $\sigma_s$ is \emph{post-surjective} if for all $x, y \in A^G$ with $y$ asymptotic to $\sigma_s(x)$, there exists $z \in A^G$ asymptotic to $x$ such that  $ \sigma_s(z)=y$. 
\par
In \cite{blanchard-maass-97}, the authors studied the  notion of post-surjectivity for CA and showed in particular that every pre-injective and post-surjective CA over a finite alphabet is invertible \cite[Theorem~1.1]{kari-post-surjective}. Consequently, they argued that post-surjectivity is a sort of “dual” to pre-injectivity: it is a strengthening of surjectivity, in a similar way that pre-injectivity is a weakening of injectivity. For linear CA, the duality is confirmed by the results obtained in \cite{bartholdi-duality-2017} where the author showed that every linear CA $\tau$ on a full shift admits a natural dual linear CA $\tau^*$. Moreover,  \cite[Theorem~1.4]{bartholdi-duality-2017} says that over a finite dimensional vector space alphabet,  a linear CA is injective, resp. pre-injective, if and only if its dual is post-surjective, resp. surjective.
\par 
The first goal of the paper is to investigate the notion of dual linear NUCA (see Section~\ref{s:dual-linear-nuca}) which extends the case of CA. By introducing the   dual configuration of local defining maps $s^*$ of every configuration $s$ of local defining maps of a linear NUCA $\sigma_s \colon V^G \to V^G$ with finite memory where $V$ is a vector space and $G$ is a group, we can define  the dual linear NUCA $\sigma_s^*$ as $\sigma_{s^*} \colon V^{*G} \to V^{*G}$   which is also a linear NUCA with finite memory. 
\par 
We then establish several pairs of dual properties for such NUCA. It turns out that stable injectivity and  invertibility are respectively dual properties of stable post-surjectivity and invertibility for linear  NUCA. Here, as for stable injectivity, we say that a  NUCA $\sigma_s$ is \emph{stably post-surjective} if $\sigma_p$ is post-surjective for every $p \in \Sigma(s)$ (see also Section~\ref{s:stable-post-surjectivity}). More specifically, we obtain the following generalization (Theorem~\ref{t:dual-properties-linear-dynamic}) to the case of linear NUCA of  the result \cite[Theorem~1.4]{bartholdi-duality-2017} for linear CA (see also Lemma~\ref{l:dual-property-1}):  
\par 
\begin{theoremletter}
\label{t:intro-dual-properties-linear-dynamic}
Let $M$ be a finite subset of a countable group $G$. Let $V$ be a finite dimensional vector space over a field and let $S = \LL(V^M, V)$.  Then for every  $s \in S^G$, the following hold: 
\begin{enumerate}[\rm (i)]
    \item $\sigma_s$ is pre-injective $\Longleftrightarrow$ $\sigma_{s^*}$ is surjective;  
    \item 
    $\sigma_s$ is injective  $\Longleftrightarrow$   $\sigma_{s^*}$ is post-surjective. 
    \item 
     $\sigma_s$ is stably injective  $\Longleftrightarrow$   $\sigma_{s^*}$ is stably post-surjective. 
\item 
$\sigma_s$ is invertible   $\Longleftrightarrow$   $\sigma_{s^*}$ is invertible. 
\end{enumerate}
\end{theoremletter}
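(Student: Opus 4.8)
The plan is to set up the duality machinery explicitly at the level of local defining maps and then reduce each equivalence to a statement about a single linear NUCA and its transpose, exploiting the finite-dimensionality of $V$ to pass between $V$ and $V^*$ symmetrically. First I would recall from Section~\ref{s:dual-linear-nuca} that for $s \in S^G$ with $S = \LL(V^M,V)$, the dual configuration $s^* \in (S^*)^G$ is obtained cellwise by transposing the local maps and "reversing" the memory, so that $M^* = M^{-1}$ (up to the usual inversion coming from the shift convention), and that there is a natural perfect pairing between finitely supported configurations in $V^{*G}$ and arbitrary configurations in $V^G$ (and vice versa) under which $\sigma_{s^*}$ is the transpose of $\sigma_s$ in the sense that $\langle \sigma_{s^*}(\xi), x\rangle = \langle \xi, \sigma_s(x)\rangle$ for $\xi$ finitely supported. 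The crucial observation is that $(s^*)^* = s$ under the canonical identification $V^{**} \cong V$, which makes the whole setup involutive and lets me prove only one direction of each biconditional.

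For part (i), I would argue that $\sigma_s$ is pre-injective if and only if its restriction to finitely supported configurations is injective, which by linear algebra is equivalent to the transpose map on the "dual side" being surjective onto each finite-dimensional truncation; a compactness/direct-limit argument over the exhausting sequence of finite subsets of the countable group $G$ then upgrades this to genuine surjectivity of $\sigma_{s^*}$ on all of $V^{*G}$. Here is where countability of $G$ matters: it guarantees a cofinal sequence of finite subsets so that surjectivity can be checked "at finite level" and assembled by a König-type / inverse-limit argument. Part (ii) is the dual statement obtained by applying (i) to $s^*$ in place of $s$ and using $(s^*)^* = s$, together with the elementary duality between injectivity and post-surjectivity: injectivity of $\sigma_s$ is equivalent to pre-injectivity of $\sigma_s$ plus a "no collapse at infinity" condition that translates precisely into post-surjectivity of the transpose. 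I expect I can invoke Lemma~\ref{l:dual-property-1} here rather than redoing the CA-level argument.

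For parts (iii) and (iv) I would use that the assignment $s \mapsto s^*$ is $G$-equivariant and continuous, hence carries $\Sigma(s)$ bijectively onto $\Sigma(s^*)$; concretely, $p \in \Sigma(s)$ if and only if $p^* \in \Sigma(s^*)$. Then (iii) follows by applying (ii) uniformly: $\sigma_s$ is stably injective means $\sigma_p$ is injective for all $p \in \Sigma(s)$, which by (ii) applied to each $p$ is equivalent to $\sigma_{p^*}$ being post-surjective for all $p \in \Sigma(s)$, i.e.\ $\sigma_q$ post-surjective for all $q \in \Sigma(s^*)$, which is exactly stable post-surjectivity of $\sigma_{s^*}$. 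For (iv), invertibility of $\sigma_s$ means $\sigma_s$ is bijective with inverse a finite-memory NUCA $\sigma_t$; transposing the identities $\sigma_t \circ \sigma_s = \Id = \sigma_s \circ \sigma_t$ and checking that the transpose of a finite-memory linear NUCA is again finite-memory linear NUCA (with memory the inverse of the original) shows $\sigma_{t^*}$ is a finite-memory inverse of $\sigma_{s^*}$, hence $\sigma_{s^*}$ is invertible; the converse is the same by involutivity.

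The main obstacle I anticipate is part (i)'s passage from "injective on finitely supported configurations" to "surjective everywhere": the transpose of a not-necessarily-continuous or not-necessarily-surjective linear map between infinite products requires care, and the argument genuinely needs the finite-dimensionality of $V$ (so each truncation $V^E$ is finite-dimensional and rank–nullity applies cleanly) together with countability of $G$ (so the inverse limit of nonempty finite-dimensional affine fibers is nonempty). Making the finite-level duality statement precise — identifying exactly which finite "window" of $s$ controls $\sigma_s$ restricted to configurations supported in a given finite set $E$, and matching it with the corresponding window of $s^*$ — is the technical heart, and I would isolate it as a lemma before deducing (i)–(iv).
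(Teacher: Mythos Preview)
Your plan is essentially the paper's own proof: parts (i) and (ii) are read off from the orthogonality relations in Lemma~\ref{l:dual-property-1}, part (iii) follows from (ii) via the $G$-equivariant bijection $\Sigma(s)\to\Sigma(s^*)$, $p\mapsto p^*$ (Lemma~\ref{l:duality-family}), and part (iv) follows by transposing the identities $\sigma_t\circ\sigma_s=\sigma_s\circ\sigma_t=\Id$ using the functoriality $(\sigma_s\circ\sigma_t)^*=\sigma_{t^*}\circ\sigma_{s^*}$ (Lemma~\ref{l:duality-functoriality}). Your diagnosis of where countability and finite-dimensionality enter (the inverse-limit argument behind the closed image property) is also correct.

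One point in your sketch of (ii) needs correcting: it is \emph{not} obtained by applying (i) to $s^*$ and invoking the involution. Applying (i) with $s$ replaced by $s^*$ gives ``$\sigma_{s^*}$ pre-injective $\Longleftrightarrow$ $\sigma_s$ surjective,'' which is a different (also true) statement, not (ii). The actual mechanism is that pre-injectivity means $\Ker(\sigma_s\vert VG)=0$ (kernel trivial on \emph{finitely supported} configurations) while injectivity means $\Ker(\sigma_s\vert V^G)=0$ (kernel trivial on \emph{all} configurations); under the pairing these orthogonals are $\im\!\left(\sigma_{s^*}\vert (V^*)^G\right)$ and $\im(\sigma_{s^*}\vert V^*G)$ respectively, which give surjectivity and post-surjectivity of $\sigma_{s^*}$. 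So (i) and (ii) come from \emph{different} clauses of Lemma~\ref{l:dual-property-1}, not from one another via $s\leftrightarrow s^*$, and your decomposition ``injectivity $=$ pre-injectivity $+$ (something dual to post-surjectivity)'' is not how the argument runs. Your stated fallback of invoking Lemma~\ref{l:dual-property-1} directly is exactly what the paper does and is the correct route.
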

\par 
By \cite[Theorem~1.4]{bartholdi-duality-2017} and \cite[Theorem~1.2]{kari-post-surjective}, a linear CA over a finite vector space alphabet is bijective if and only so is its dual linear CA. However, the above equivalence breaks down for linear NUCA (see Proposition~\ref{p:bijective-non-dual}). In other words, bijectivity is not a dual property for linear NUCA. 
\par 
In the case of linear NUCA, we obtain the following useful and compact characterization of stable injectivity and stable post-surjectivity respectively  in terms of left and right invertibility (see Theorem~\ref{t:characterization-left-right-invertibility}). 
\begin{theoremletter}
    \label{t:intro-characterization-left-right-invertibility}
Let $V$ be a finite  vector space and let $G$ be a countable group. Then a linear NUCA with finite memory over $V^G$ is stably injective, resp. stably post-surjective, if and only if it is left-invertible, resp. right invertible. 
\end{theoremletter}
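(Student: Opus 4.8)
The plan is to prove the two halves separately, establishing the easy implications directly and reducing the hard implications to Theorem~\ref{t:intro-dual-properties-linear-dynamic} together with the duality machinery of Section~\ref{s:dual-linear-nuca}. First I record the trivial directions. If a linear NUCA $\sigma_s \colon V^G \to V^G$ is left-invertible by a finite-memory NUCA $\tau$, then for any $p \in \Sigma(s)$ one argues that $\tau$ transports to a left inverse of $\sigma_p$: since $\Sigma(s)$ is contained in the orbit closure of $s$ and $\tau \circ \sigma_s = \Id$ is a ``closed'' relation under the shift and under prodiscrete limits, one gets a finite-memory $\tau_p$ with $\tau_p \circ \sigma_p = \Id$, hence $\sigma_p$ is injective; so $\sigma_s$ is stably injective. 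The dual argument (using the dual NUCA and the characterization of post-surjectivity via the dual in Theorem~\ref{t:intro-dual-properties-linear-dynamic}(i)) gives: right-invertible $\Rightarrow$ stably post-surjective.

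For the converse of the first half, suppose $\sigma_s$ is stably injective. The strategy is to produce the left inverse by a compactness/finiteness argument over the finite-dimensional alphabet. Because $V$ is finite (hence $V^M$ is finite and $S = \LL(V^M,V)$ is finite), $S^G$ is a compact space and $\Sigma(s)$ is compact. Stable injectivity means $\sigma_p$ is injective for every $p \in \Sigma(s)$; I would first upgrade this to a \emph{uniform} injectivity statement: there is a finite $F \subset G$ such that for every $p \in \Sigma(s)$ and every pair $x \neq y$ in $V^G$ with $x\vert_F = y\vert_F$ (say, differing only at $1_G$), one has $\sigma_p(x)\vert_F \neq \sigma_p(y)\vert_F$ — this follows by a standard compactness argument on $\Sigma(s) \times (\text{ball in } V^G)$ using that all the maps involved are continuous and the alphabet is finite. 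From uniform injectivity one extracts, again by finiteness of $V$, that the central cell value $x(1_G)$ is a function of $\sigma_s(x)\vert_F$ and of $s\vert_{F'}$ for a slightly larger finite $F'$; by $G$-``covariance'' of the construction this defines a finite-memory NUCA $\tau$ with $\tau \circ \sigma_s = \Id$. This is where I expect the main technical friction: making the passage from ``injective for each $p$'' to a single uniform finite window $F$ fully rigorous, and then checking that the resulting local rule is genuinely a NUCA in the sense of Definition~\ref{d:most-general-def-asyn-ca} (i.e., that its dependence on $s$ is itself local and shift-compatible).

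For the converse of the second half, suppose $\sigma_s$ is stably post-surjective. Here I would pass to the dual: by Theorem~\ref{t:intro-dual-properties-linear-dynamic}(iii), stable post-surjectivity of $\sigma_s$ is equivalent to stable injectivity of the dual linear NUCA $\sigma_{s^*} = \sigma_s^*$ on $V^{*G}$ (note $V^*$ is again a finite vector space when $V$ is, so the hypotheses apply to the dual). By the first half, $\sigma_{s^*}$ is then left-invertible by some finite-memory NUCA $\rho$. Dualizing back — using that the dual of a finite-memory linear NUCA is a finite-memory linear NUCA and that $(\,\cdot\,)^*$ reverses composition and sends $\Id$ to $\Id$ (this is exactly the content of Section~\ref{s:dual-linear-nuca}) — we get $\sigma_s \circ \rho^* = \Id$, so $\sigma_s$ is right-invertible. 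Symmetrically, one could also deduce the first converse from the second via duality, so only one of the two converses really needs the compactness argument; I would present the left-invertibility case in detail and obtain right-invertibility formally from it. The one point to verify carefully in the dualization step is that $V$ finite-dimensional is not enough for Theorem~\ref{t:intro-dual-properties-linear-dynamic} as literally stated for the \emph{dynamical} equivalences we invoke — but since $V$ is assumed \emph{finite}, so is $V^*$, and both $\sigma_s$ and $\sigma_{s^*}$ fall under the hypotheses, so the round trip is legitimate.
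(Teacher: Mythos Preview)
Your plan matches the paper's proof in structure: part~(ii) is deduced from part~(i) by passing to the dual via Theorem~\ref{t:intro-dual-properties-linear-dynamic}(iii) and the functoriality $(\sigma\circ\tau)^*=\tau^*\circ\sigma^*$ (Lemma~\ref{l:duality-functoriality}), exactly as you propose. The only structural difference is that for part~(i) the paper does not carry out the compactness argument at all --- it simply invokes \cite[Theorem~A]{phung-tcs}, which already states that a NUCA with finite memory over a finite alphabet is stably injective if and only if it is left-invertible, and then observes that linearity of $\sigma$ forces linearity of the left inverse. Your compactness sketch is essentially the content of that cited theorem, so you are reproving what the paper imports.

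Two small points to clean up. First, your initial phrasing of ``uniform injectivity'' is garbled: the condition you need is that $\sigma_p(x)\vert_F$ determines $x(1_G)$ uniformly over $p\in\Sigma(s)$ (which you state correctly a few lines later), not that $x\vert_F=y\vert_F$ with $x\neq y$ forces $\sigma_p(x)\vert_F\neq\sigma_p(y)\vert_F$. Second, in your easy direction ``right-invertible $\Rightarrow$ stably post-surjective'' you cite Theorem~\ref{t:intro-dual-properties-linear-dynamic}(i), but you mean (iii); alternatively, the paper proves this implication directly as Lemma~\ref{l:direct-stably-post-surjective-nuca-linear-finite-a} without going through the dual.
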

\par 
Combining Theorem~\ref{t:intro-dual-properties-linear-dynamic}  and Theorem~\ref{t:intro-characterization-left-right-invertibility} with Theorem~\ref{t:invertible-general-linear}, we obtain another characterization of invertibility in terms of pre-injectivity and stable  post-surjectivity combined (see Theorem~\ref{t:characterization-invertible-linear-NUCA}). 
\par 
\begin{theoremletter}
    \label{t:intro-characterization-invertible-linear-NUCA}
Let $G$ be a countable group and let $V$ be a finite vector space. Suppose that $\tau \colon V^G \to V^G$ is a linear NUCA with finite memory. Then $\tau$ is invertible if and only if it is pre-injective and stably post-surjective if and only if it is surjective and stably injective. 
\end{theoremletter}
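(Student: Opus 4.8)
The plan is to bootstrap Theorem~\ref{t:intro-characterization-invertible-linear-NUCA} from the three facts already announced — the duality dictionary of Theorem~\ref{t:intro-dual-properties-linear-dynamic}, the identification of stable injectivity (resp.\ stable post-surjectivity) with left (resp.\ right) invertibility in Theorem~\ref{t:intro-characterization-left-right-invertibility}, and the invertibility criterion of Theorem~\ref{t:invertible-general-linear} — together with the involutivity of the dual construction, $s^{**}=s$ under the canonical identification $V\cong V^{**}$ for finite-dimensional $V$ (established in Section~\ref{s:dual-linear-nuca}). Write $\tau=\sigma_s$ with $s\in S^G$, $S=\LL(V^M,V)$, $M$ finite. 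Since a finite vector space is finite-dimensional, Theorem~\ref{t:intro-dual-properties-linear-dynamic} applies, and since $V^*$ is again a finite vector space, Theorem~\ref{t:intro-characterization-left-right-invertibility} applies equally to $\sigma_{s^*}$. The strategy is to prove the equivalence ``$\tau$ invertible $\iff$ $\tau$ surjective and stably injective'' directly, and then deduce ``$\tau$ invertible $\iff$ $\tau$ pre-injective and stably post-surjective'' by dualizing it.

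First I would dispose of the implications starting from invertibility. If $\tau$ is invertible, then $\tau^{-1}$ is itself a NUCA with finite memory and $\tau^{-1}\circ\tau=\tau\circ\tau^{-1}=\Id$, so $\tau$ is simultaneously left-invertible and right-invertible; by Theorem~\ref{t:intro-characterization-left-right-invertibility} this means $\tau$ is stably injective and stably post-surjective. Stable injectivity forces injectivity (take $p=s\in\Sigma(s)$), hence pre-injectivity, and bijectivity forces surjectivity. Thus an invertible $\tau$ satisfies both ``pre-injective and stably post-surjective'' and ``surjective and stably injective''.

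Next I would show that ``surjective and stably injective'' forces invertibility — this is precisely where Theorem~\ref{t:invertible-general-linear} enters. Concretely: stable injectivity yields, via Theorem~\ref{t:intro-characterization-left-right-invertibility}, a NUCA with finite memory $\lambda$ with $\lambda\circ\tau=\Id$; then $\tau$ is injective, and being also surjective it is bijective, whence $\lambda=\tau^{-1}$ is a NUCA with finite memory and $\tau$ is invertible. Combined with the previous paragraph, this already establishes ``$\tau$ invertible $\iff$ $\tau$ surjective and stably injective''.

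Finally I would bring in duality. By Theorem~\ref{t:intro-dual-properties-linear-dynamic}(i), $\sigma_s$ is pre-injective iff $\sigma_{s^*}$ is surjective; and by Theorem~\ref{t:intro-dual-properties-linear-dynamic}(iii) applied to the configuration $s^*$, together with $s^{**}=s$, the NUCA $\sigma_s$ is stably post-surjective iff $\sigma_{s^*}$ is stably injective. Hence ``$\sigma_s$ pre-injective and stably post-surjective'' is equivalent to ``$\sigma_{s^*}$ surjective and stably injective'', which by the equivalence just proved (applied to $\sigma_{s^*}$) is equivalent to ``$\sigma_{s^*}$ invertible'', which by Theorem~\ref{t:intro-dual-properties-linear-dynamic}(iv) is equivalent to ``$\sigma_{s^{**}}=\sigma_s$ invertible''. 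Chaining the two equivalences shows that invertibility, ``pre-injective and stably post-surjective'', and ``surjective and stably injective'' all coincide. The only delicate point is the bookkeeping with the dual: one must invoke Theorem~\ref{t:intro-dual-properties-linear-dynamic}(iii) in the form pertaining to $s^*$ rather than $s$ and use the involutivity $s^{**}=s$, so that the two stable hypotheses are genuinely interchanged under the dual correspondence; everything substantive is already packaged in the cited theorems.
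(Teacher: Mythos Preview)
Your argument is correct but organized differently from the paper's, and in fact it never invokes Theorem~\ref{t:invertible-general-linear} despite your announcement that ``this is precisely where [it] enters.'' The paper (Theorem~\ref{t:characterization-invertible-linear-NUCA}) treats the equivalence ``invertible $\Leftrightarrow$ pre-injective and stably post-surjective'' directly: the forward direction uses Lemma~\ref{l:direct-stably-post-surjective-nuca-linear-finite-a}, and the backward direction is exactly Theorem~\ref{t:invertible-general-linear}, a hands-on construction of the inverse from uniform post-surjectivity; the remaining equivalence with ``surjective and stably injective'' is then left to be read off by dualizing via Theorem~\ref{t:intro-dual-properties-linear-dynamic}. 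You proceed in the opposite order: you establish ``surjective and stably injective $\Rightarrow$ invertible'' by the elementary remark that a left inverse (supplied by Theorem~\ref{t:intro-characterization-left-right-invertibility}(i)) of a bijection is automatically a two-sided inverse, and only then dualize via Theorem~\ref{t:intro-dual-properties-linear-dynamic}(i),(iii),(iv) to reach the pre-injective/stably post-surjective characterization. Your route is more economical and in particular shows that, once Theorems~\ref{t:intro-dual-properties-linear-dynamic} and~\ref{t:intro-characterization-left-right-invertibility} are available, Theorem~\ref{t:invertible-general-linear} is not actually needed to prove Theorem~\ref{t:intro-characterization-invertible-linear-NUCA}; the paper's route, by contrast, yields an independent, duality-free proof of the implication ``pre-injective and stably post-surjective $\Rightarrow$ invertible.'' You should simply delete the stray sentence about Theorem~\ref{t:invertible-general-linear} entering, since your argument as written makes no use of it.
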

\par 
Since there exist injective non-stably injective linear NUCA with finite memory by \cite[Example~14.1]{phung-tcs}, we obtain post-surjective linear NUCA with finite memory which are not stably post-surjective as a consequence of  Theorem~\ref{t:intro-dual-properties-linear-dynamic}. Hence, we deduce from Theorem~\ref{t:intro-characterization-invertible-linear-NUCA} that 
\cite[Theorem~1.1]{kari-post-surjective} fails for the general class of linear NUCA with finite memory: to ensure the invertibility of a linear NUCA, surjectivity should be strengthened by stably post-surjecitivity, a strictly stronger property than post-surjectivity, to correctly compensate the weakening of stable injectivity by pre-injectivity.   
\par 
As another immediate  application of Theorem~\ref{t:intro-dual-properties-linear-dynamic} and the surjunctivity result \cite[Theorem~B]{phung-tcs} (see also \cite{gottschalk},  \cite{gromov-esav},  \cite{csc-sofic-linear}, \cite{cscp-alg-goe}, \cite{ceccherini}, \cite{phung-geometric}),  we obtain the following result concerning the dual surjunctivity (cf. \cite[Conjecture~2]{kari-post-surjective}, \cite{phung-post-surjective}) where under suitable assumptions, post-surjectivity or stable post-surjectivity alone implies the invertibility of linear NUCA with finite memory:   

\begin{theoremletter}
\label{t:intro-dual-surjunctivity-linear-nuca}
Let $M$ be a finite subset of a countable group $G$. Let $V$ be a finite  vector space and let $S = \LL(V^M, V)$. Let $s \in S^G$ be asymptotic to a constant configuration.  
Then $\sigma_s$ is invertible in each of the following cases: 
\begin{enumerate} [\rm (i)]
\item $G$ is amenable and $\sigma_s$ is post-surjective;
\item $G$ is residually finite and $\sigma_s$ is stably post-surjective. 
\end{enumerate}
\end{theoremletter}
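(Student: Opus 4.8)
The plan is to reduce both assertions, via the duality of Theorem~\ref{t:intro-dual-properties-linear-dynamic}, to a surjunctivity statement about the dual linear NUCA $\sigma_{s^*}$, and then to invoke \cite[Theorem~B]{phung-tcs}. Two preliminary facts about the dual construction of Section~\ref{s:dual-linear-nuca} are needed. First, the passage $s \mapsto s^*$ is involutive: because $V$ is finite dimensional, the canonical isomorphism $V \cong V^{**}$ identifies $\sigma_{s^{**}}$ with $\sigma_s$, and $V^*$ is again a finite vector space, so $\sigma_{s^*}$ is again a linear NUCA with finite memory. Second, and crucially for applying \cite[Theorem~B]{phung-tcs}, if $s$ is asymptotic to the constant configuration with value $\mu \in \LL(V^M, V)$ then $s^*$ is asymptotic to the constant configuration with value $\mu^*$; indeed $s^*(g)$ depends only on the restriction of $s$ to a fixed finite neighbourhood of $g$ (of a size controlled by $M$ and $M^{-1}M$), so $s^*$ and that constant configuration agree on every cell outside a fixed finite enlargement of the set on which $s$ differs from its limit. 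Hence $\sigma_{s^*}$ satisfies exactly the same standing hypotheses as $\sigma_s$: a linear NUCA with finite memory over the full shift on a finite vector space over the countable group $G$, with defining configuration asymptotic to a constant one.

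For part (i), assume $G$ amenable and $\sigma_s$ post-surjective. Applying Theorem~\ref{t:intro-dual-properties-linear-dynamic}(ii) to the NUCA $\sigma_{s^*}$ — whose dual is $\sigma_s$, by involutivity — post-surjectivity of $\sigma_s$ is equivalent to injectivity of $\sigma_{s^*}$. Thus $\sigma_{s^*}$ is an injective linear NUCA with finite memory, asymptotic to a constant, over the amenable group $G$, and the surjunctivity result \cite[Theorem~B]{phung-tcs} yields that $\sigma_{s^*}$ is invertible. (If one only extracts surjectivity of $\sigma_{s^*}$ from \cite{phung-tcs}, that is, the Garden-of-Eden statement over amenable groups, one upgrades this to invertibility using Theorem~\ref{t:intro-characterization-left-right-invertibility} together with the fact that over an amenable group an injective such NUCA is stably injective, hence left-invertible, hence, being also bijective, invertible.) Finally Theorem~\ref{t:intro-dual-properties-linear-dynamic}(iv) transports invertibility back: $\sigma_s = (\sigma_{s^*})^*$ is invertible.

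For part (ii), the argument is parallel. Assume $G$ residually finite and $\sigma_s$ stably post-surjective. By Theorem~\ref{t:intro-dual-properties-linear-dynamic}(iii) applied to $\sigma_{s^*}$, this is equivalent to stable injectivity of $\sigma_{s^*}$, so $\sigma_{s^*}$ is a stably injective linear NUCA with finite memory, asymptotic to a constant, over the residually finite group $G$. Then \cite[Theorem~B]{phung-tcs} makes $\sigma_{s^*}$ surjective, and, being stably injective, it is invertible by Theorem~\ref{t:intro-characterization-invertible-linear-NUCA} (surjective together with stably injective implies invertible) — alternatively \cite{phung-tcs} gives invertibility of $\sigma_{s^*}$ directly. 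Once more Theorem~\ref{t:intro-dual-properties-linear-dynamic}(iv) carries invertibility back to $\sigma_s$.

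I expect the substantive content to lie entirely in the two imported ingredients, namely the duality of Theorem~\ref{t:intro-dual-properties-linear-dynamic} and the surjunctivity of \cite[Theorem~B]{phung-tcs}; the only genuine labour here is the bookkeeping of the first paragraph, verifying that the dual of an asymptotically constant linear NUCA is again asymptotically constant so that \cite{phung-tcs} applies, and that involutivity lets one move freely between $\sigma_s$ and $\sigma_{s^*}$. The step demanding the most care is in case (i): one must ensure that over an amenable group the cited result delivers invertibility of $\sigma_{s^*}$ and not merely its bijectivity. This is exactly where the asymmetry between the amenable and the residually finite hypotheses originates, and where the hypothesis that $s$ be asymptotic to a constant is indispensable, as it must be in view of the injective but non-stably-injective linear NUCA of \cite[Example~14.1]{phung-tcs}.
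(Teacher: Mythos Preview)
Your proposal is correct and follows essentially the same approach as the paper: dualize the hypothesis via Theorem~\ref{t:intro-dual-properties-linear-dynamic}(ii)--(iii), invoke \cite[Theorem~B]{phung-tcs} for the dual $\sigma_{s^*}$, and transport invertibility back via Theorem~\ref{t:intro-dual-properties-linear-dynamic}(iv). You are in fact more careful than the paper in making explicit that $s^*$ is again asymptotic to a constant configuration (so that \cite[Theorem~B]{phung-tcs} applies to it), a point the paper leaves implicit.
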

\par 
The paper is organized as follows. In Section~\ref{s:induced-local-map}, we recall the useful construction of various induced local maps of NUCA. Then we establish in Section~\ref{s:CIP-linear-NUCA} the fundamental closed image property Theorem~\ref{t:closed-image-linear} for linear NUCA with finite memory and arbitrary finite dimensional vector space alphabets (not necessarily finite). 
In Section~\ref{s:stable-post-surjectivity}, we prove that for linear NUCA with finite memory, surjectivity and pre-injectivity are stable properties and that right-invertibility implies stable post-surjectivity (cf. Lemma~\ref{l:direct-stably-post-surjective-nuca-linear-finite-a}). 
Then in Section~\ref{s:dual-linear-nuca}, we define the notion of dual linear NUCA and describe the natural pairing which extends the pairing construction in \cite{bartholdi-duality-2017}. In particular, we prove in Lemma~\ref{l:duality-functoriality} the functorial property under composition of  dual NUCA, i.e., $(\sigma \circ \tau)^*=\tau^* \circ  \sigma^*$ for all linear NUCA $\sigma, \tau \colon V^G \to V^G$. The main technical Lemma \ref{l:dual-property-1} given in 
Section~\ref{s:main-lemma}, together with Lemma~\ref{l:duality-functoriality}, will be essential for the proof of Theorem~\ref{t:intro-dual-properties-linear-dynamic} in Section~\ref{s:dual-preoperties-main-result}. 
In Section~\ref{s:weak-uniform-post-surj}, we give a weak uniform post-surjectivity property for post-surjective linear NUCA with finite memory over an arbitrary finite dimensional vector space alphabet.  
The proofs  of Theorem~\ref{t:intro-characterization-invertible-linear-NUCA},  Theorem~\ref{t:intro-characterization-left-right-invertibility}, and Theorem~\ref{t:intro-dual-surjunctivity-linear-nuca} are presented in 
Section~\ref{s:stably-post0surjective},  
Section~\ref{s:characterization-left-inevertible}, and  
Section~\ref{s:dual-surjunctivity} respectively. 
We give in 
Section~\ref{s:bijectivity-not-dual} a concrete example of a one-dimensional linear NUCA with finite memory and finite alphabet which is bijective but its dual is not injective. 
Finally, Section~\ref{s:kurka-construction} and 
Section~\ref{s:main-result} are dedicated to show that, as for linear CA, the pseudo-orbit tracing property or also known as the shadowing property is also verified by linear NUCA with finite memory over finite dimensional vector space (see Theorem~\ref{t:main-shadow}).

\section{Induced local maps of NUCA} 
\label{s:induced-local-map}

Let $G$ be a group and let $A$ be a set. For every subset $E\subset G$, $g \in G$,  and $x \in A^E$ we define $gx \in A^{gE}$ by setting $gx(gh)=x(h)$ for all $h \in E$. In particular,  $gA^E= \{gx \colon x \in A^E\}=A^{gE}$. 
\par 
Let $M$  be a subset of a group $G$. Let $A$ be a set and let $S=A^{A^M}$. For every finite subset $E \subset G$  and $w \in S^{E}$,  
we define a map  $f_{E,w}^+ \colon A^{E M} \to A^{E}$ for every $x \in A^{EM}$ and $g \in E$ by (in the case of CA, see e.g. \cite[Lemma~3.2]{cscp-alg-goe}, \cite[Proposition~3.5]{phung-2020}, \cite[Secion~2.2]{phung-embedding}):  
\begin{align}
\label{e:induced-local-maps} 
    f_{E,w}^+(x)(g) & = w(g)((g^{-1}x)\vert_M). 
\end{align}
\par 
In the above formula, note that  $g^{-1}x \in A^{g^{-1}EM}$ and $M \subset g^{-1}EM$ since $1_G \in g^{-1}E$ for $g \in E$. Therefore, the map   $f_{E,w}^+ \colon A^{E M} \to A^{E}$ is well defined. 
\par
Consequently, for every $s \in S^G$, we have a well-defined induced local map $f_{E, s\vert_E}^+ \colon A^{E M} \to A^{E}$ for every finite subset $E \subset G$ which satisfies: 
\begin{equation}
\label{e:induced-local-maps-general} 
    \sigma_s(x)(g) =  f_{E, s\vert_E}^+(x\vert_{EM})(g)
\end{equation}
for every $x \in A^G$ and $g \in E$. Equivalently, we have for all $x \in A^G$ that: 
\begin{equation}
\label{e:induced-local-maps-proof} 
    \sigma_s(x)\vert_E =  f_{E, s\vert_E}^+(x\vert_{EM}). 
\end{equation}
\par 
It follows that for $\Gamma = \im \sigma_s$, we have the relation: 
\begin{equation} 
\label{e:local-image}
    \Gamma_E = \im f_{E, s\vert_E}^+. 
\end{equation}

\section{The closed image property of linear NUCA} 
\label{s:CIP-linear-NUCA}

\begin{theorem}
\label{t:closed-image-linear} 
Let $M$ be a finite subset of a countable group $G$. Let $V$ be a finite dimensional vector space and let $S=\LL(V^M, V)$. Then for every $s \in S^G$, the image $\sigma_s(V^G)$ is closed in $V^G$ with respect to the prodiscrete topology. 
\end{theorem}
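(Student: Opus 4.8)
The plan is to prove closedness by a compactness/diagonal argument on the finite-dimensional "coordinate" images $\Gamma_E \subset V^E$, using the relation $\Gamma_E = \im f_{E,s\vert_E}^+$ from \eqref{e:local-image}. Recall that $V^G$ with the prodiscrete topology is the inverse limit of the finite-dimensional spaces $V^E$ over finite $E \subset G$, and a subset $\Gamma \subset V^G$ is closed if and only if it equals the inverse limit of its projections, i.e.\ $y \in \Gamma$ whenever $y\vert_E \in \Gamma_E$ for every finite $E \subset G$. So the task reduces to: given $y \in V^G$ with $y\vert_E \in \Gamma_E$ for all finite $E$, produce $x \in V^G$ with $\sigma_s(x) = y$.

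First I would fix an exhaustion $E_1 \subset E_2 \subset \cdots$ of $G$ by finite subsets with $\bigcup_n E_n = G$ (possible since $G$ is countable), and set $F_n = E_n M$, which is again finite since $M$ is finite. For each $n$, the hypothesis gives that $y\vert_{E_n}$ lies in $\im f_{E_n, s\vert_{E_n}}^+$, so the affine (in fact linear) preimage
\[
P_n = (f_{E_n, s\vert_{E_n}}^+)^{-1}\bigl(y\vert_{E_n}\bigr) \subset V^{F_n}
\]
is a nonempty affine subspace of a finite-dimensional vector space. The key point is a \emph{consistency/nestedness} observation: because the local maps $f_{E,w}^+$ are compatible with restriction (a larger window restricts to a smaller one — this is the content of \eqref{e:induced-local-maps-general}), if $z \in P_{n+1}$ then $z\vert_{F_n}$ restricted appropriately agrees with an element of $P_n$; more precisely, the restriction maps $V^{F_{n+1}} \to V^{F_n}$ carry $P_{n+1}$ into $P_n$. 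Thus we have a descending-type system of nonempty finite-dimensional affine subspaces, and I want to extract a coherent sequence $(x_n)$ with $x_n \in P_n$ and $x_{n+1}\vert_{F_n} = x_n$, which then glues to the desired $x \in V^G$ with $\sigma_s(x)\vert_{E_n} = y\vert_{E_n}$ for all $n$, hence $\sigma_s(x) = y$.

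The main obstacle — and the reason finite-dimensionality of $V$ is essential — is that the restriction maps between the $P_n$ need not be surjective, so a naive greedy choice can get stuck, and an arbitrary inverse limit of nonempty sets can be empty. The standard fix is a \emph{stabilization} argument: for fixed $n$, consider the decreasing chain of subspaces (or affine subspaces) $\pi_n(P_m) \subset V^{F_n}$ for $m \geq n$, where $\pi_n \colon V^{F_m} \to V^{F_n}$ is restriction; since these are affine subspaces of the finite-dimensional space $V^{F_n}$, the chain stabilizes, say to $Q_n$, and one checks $\pi_n(Q_{n+1}) = Q_n$ (surjectivity onto the stabilized pieces) and each $Q_n \neq \emptyset$. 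Now one can build the coherent sequence: pick $x_1 \in Q_1$, and inductively, given $x_n \in Q_n$, use surjectivity $\pi_n(Q_{n+1}) = Q_n$ to lift to $x_{n+1} \in Q_{n+1}$ with $x_{n+1}\vert_{F_n} = x_n$. Define $x \in V^G$ by $x\vert_{F_n} = x_n$ (well-defined by coherence; covers all of $G$ since $F_n \supset E_n$ exhausts $G$); then for every $n$, $x_n \in Q_n \subset P_n$ gives $f_{E_n,s\vert_{E_n}}^+(x\vert_{F_n}) = y\vert_{E_n}$, i.e.\ $\sigma_s(x)\vert_{E_n} = y\vert_{E_n}$ by \eqref{e:induced-local-maps-proof}. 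Letting $n \to \infty$ yields $\sigma_s(x) = y$, so $y \in \Gamma$ and $\Gamma$ is closed. I would note that linearity of the $f_{E,w}^+$ (so that the $P_n$ and $Q_n$ are genuine affine subspaces with well-behaved intersections and images) is exactly what powers the stabilization step; over a general alphabet $A$ with arbitrary maps this can fail.
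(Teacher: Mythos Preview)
Your proof is correct and follows essentially the same approach as the paper: both set up the identical inverse system of nonempty affine subspaces $P_n = (f_{E_n,s\vert_{E_n}}^+)^{-1}(y\vert_{E_n}) \subset V^{E_nM}$ and extract a point of the inverse limit. The only difference is that the paper dispatches the nonemptiness of $\varprojlim P_n$ by citing an external lemma (\cite[Lemma~3.1]{cscp-jpaa}), whereas you spell out its content via the standard Mittag--Leffler stabilization argument; these are the same proof at different levels of unpacking.
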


\begin{proof}
Since $G$ is countable, we can find an exhaustion of $G$ by finite subsets $(E_n)_{n \in \N}$ so that $G = \cup_{n \in \N} E_n$. Up to enlarging $M$, we can clearly suppose without loss of generality that $1_G \in M$. 
Let $\Gamma = \sigma_s(V^G)$ and let $d \in V^G$ belong to the closure of $\Gamma$. Then for every $n \in \N$, the affine subspace 
\[ 
W_n \coloneqq (f^+_{E_n, s\vert_{E_n}})^{-1}(d\vert_{E_n}) \subset V^{E_nM}
\] 
is nonempty. Since $E_nM \subset E_mM$ for $m \geq n \geq 0$, we have canonical projection linear maps $\pi_{n,m} \colon V^{E_mM} \to V^{E_nM}$. It is clear that $\pi_n(W_m) \subset W_n$ for every $m \geq n \geq 0$ thus we obtain a well-defined  inverse system 
$(W_n)_{n \in \N}$ 
with affine transition maps $\pi_{n,m}\vert_{W_m} \colon W_m \to W_n$ where $m \geq n \geq 0$. 
Since the affine spaces $W_n$ are nonempty, we infer from \cite[Lemma~3.1]{cscp-jpaa} that $\varprojlim_{n  } W_n \neq \varnothing$. Thus, we can choose $x \in \varprojlim_{n  } W_n \subset \varprojlim_{n  } V^{E_nM} = V^G$. In particular, $x \vert_{E_nM} \in W_n$ for every $n \in \N$ and it follows from the definition of $W_n$ that we have:
\[
\sigma_s(x)\vert_{E_n} = f^+_{E_n, s\vert_{E_n}}(x\vert_{E_nM})= d\vert_{E_n}. 
\]
\par 
As $\cup_{n \in \N}E_n = G$, we deduce that $\sigma_s(x) = d$ and thus $ d \in \Gamma$.  We conclude that $\Gamma$ is closed in $V^G$ with respect to the discrete topology. 
\end{proof}

\section{On some  stable properties of NUCA} 
\label{s:stable-post-surjectivity}
By \cite[Theorem~A]{phung-tcs}, we know that reversibility is a stable property for NUCA over finite alphabets while injectivity is not. In other words, reversibility is equivalent to stable reversibility but stable injectivity is strictly stronger than injectivity. We show below that surjectivity is in fact equivalent to stable  surjectivity. 
\par 
As for stable injectivity, we define  stable surjectivity and stable pre-inejctivity for NUCA. 

\begin{definition}
Let $M$  be a subset of a group $G$ and let $S = A^{A^M}$ where $A$ is a set. 
Given $s \in S^G$, the NUCA $\sigma_s$ is said to be  \emph{stably surjective}, resp. stably pre-injective, if for every $p \in \Sigma(s)$, the NUCA $\sigma_p$ is surjective, resp. pre-injective. 
\end{definition}

\par 
\begin{lemma}
\label{l:stable-surjective-linear}
Let $M$ be a finite subset of a countable group $G$. Let $V$ be a finite dimensional vector space over a field and let  $s \in S^G$ where $S = \LL(V^M, V)$. Then $\sigma_s$ is surjective, resp. pre-injective,  if and only if it is stably surjective, resp. stably pre-injective.  
\end{lemma}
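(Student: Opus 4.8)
The plan is to prove the two equivalences by exploiting that every $p \in \Sigma(s)$ is a limit of translates $g_n s$, together with the closed image property of Theorem~\ref{t:closed-image-linear} and the finite‑dimensionality of $V$. One direction of each equivalence is trivial, since $s \in \Sigma(s)$, so stable surjectivity (resp.\ stable pre‑injectivity) implies surjectivity (resp.\ pre‑injectivity) of $\sigma_s$. For the converse, the key observation is that surjectivity and pre‑injectivity are each witnessed by the induced local maps $f^+_{E, w}$ on finite patterns, and that the relevant finite‑pattern conditions are invariant under translation and stable under taking prodiscrete limits of the configuration $w \in S^E$.

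First I would treat surjectivity. Fix an exhaustion $(E_n)$ of $G$ by finite subsets with $1_G \in M$. By \eqref{e:local-image}, $\sigma_s$ is surjective if and only if $\im f^+_{E_n, s\vert_{E_n}} = V^{E_n}$ for every $n$ — here I use Theorem~\ref{t:closed-image-linear}: since $\im \sigma_s$ is closed, surjectivity of $\sigma_s$ is equivalent to surjectivity onto every finite window, i.e.\ $\Gamma_{E_n} = V^{E_n}$ for all $n$. Now the rank of the linear map $f^+_{E, w}$ depends only on the finite tuple $w \in S^E = \LL(V^M,V)^E$, and it is insensitive to translation: $f^+_{gE, gw}$ has the same rank as $f^+_{E,w}$, where $(gw)(gh) = w(h)$. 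Given $p \in \Sigma(s)$ and a finite $E \subset G$, the restriction $p\vert_E$ is a limit of patterns $(g_n s)\vert_E$ for suitable $g_n \in G$ (translating a large enough window of $s$), hence $p\vert_E$ equals some $(g_n s)\vert_E$ for $n$ large, because $S$ may be infinite but the value of $f^+_{E,\,\cdot}$ — more precisely, whether it is surjective — is a condition on only finitely many "coordinates" of the local maps once we note $\LL(V^M,V)$ is a finite‑dimensional vector space, so the assignment $w \mapsto \operatorname{rank} f^+_{E,w}$ is lower semicontinuous and the locus where it is maximal is open. Actually the cleanest route: surjectivity of $f^+_{E,w}$ is an open condition on $w \in S^E$ (vanishing of no maximal minor), it is translation‑invariant, and it holds for $w = (g s)\vert_E$ for all $g$ and all $E$; since $p\vert_E$ lies in the closure of $\{(gs)\vert_{E} : g \in G\}$ inside the finite‑dimensional space $S^E$ and the surjective locus is open and dense there (it contains that whole orbit), we must check it actually contains the limit. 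The point is that the orbit consists of translates of a \emph{single} $s$, so $\{(gs)\vert_E : g\}$ is finite‑dimensional-bounded but not obviously closed — instead I argue directly: if $\sigma_p$ were not surjective, then by Theorem~\ref{t:closed-image-linear} applied to $p$ some $f^+_{E_n, p\vert_{E_n}}$ is not surjective, so a certain maximal minor vanishes at $p\vert_{E_n}$; but $p\vert_{E_n} = \lim_k (g_k s)\vert_{E_n}$ and each $f^+_{g_k^{-1}E_n,\, s\vert_{g_k^{-1}E_n}}$ is surjective since $\sigma_s$ is, hence the corresponding minor is nonzero, and since there are only finitely many finite subsets of a given cardinality meeting any fixed finite region — no: use instead that $f^+_{E_n, (g_k s)\vert_{E_n}} = g_k \circ f^+_{g_k^{-1}E_n, s\vert_{g_k^{-1}E_n}} \circ g_k^{-1}$ is surjective for every $k$, so the minor is nonzero along the sequence, contradicting vanishing at the limit. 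This gives stable surjectivity.

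For pre‑injectivity, I would argue analogously using the characterization via finite patterns with boundary conditions: $\sigma_s$ is pre‑injective if and only if for every finite $E$ there is a finite $E' \supset EM$ such that $f^+_{E', s\vert_{E'}}$ restricted to configurations supported in $E$ (i.e.\ equal to a fixed pattern off $E$) is injective — equivalently, the "Garden of Eden"–type counting condition, or most directly: $\sigma_s$ is pre‑injective iff $\Ker \sigma_s$ contains no nonzero finitely supported configuration, and by linearity this reduces to: for each finite $E$, the linear map $V^E \to V^{EM}$, $u \mapsto \sigma_s(\tilde u)\vert_{EM}$ (with $\tilde u$ the extension of $u$ by zero) is injective, which again is the nonvanishing of a suitable minor of a matrix built from $s\vert_{EM}$. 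Translation‑invariance and the prodiscrete‑limit argument then carry over verbatim: nonvanishing of that minor holds for all translates of $s$, hence at every $p\vert_{EM}$ with $p \in \Sigma(s)$, so $\sigma_p$ is pre‑injective.

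The main obstacle I expect is the pre‑injectivity direction: unlike surjectivity, which is cleanly handled by the closed‑image theorem, pre‑injectivity is an "existential over finite windows" statement and one must be careful that the window $E'$ needed to certify non‑pre‑injectivity of $\sigma_p$ can be pulled back along the translations $g_k$ to a uniformly bounded family — this is where I would lean on linearity to replace pre‑injectivity by the statement "no nonzero finitely supported element in the kernel," making it a genuinely local (single finite minor) condition for each support set $E$, after which translation invariance plus continuity of the determinant on the finite‑dimensional space $S^{EM}$ finishes the argument.
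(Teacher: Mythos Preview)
Your core approach is the paper's: reduce both properties to conditions on the induced local maps $f^+_{E,w}$, use translation invariance, and transfer from $s$ to any $p\in\Sigma(s)$ via agreement on finite windows. But you are repeatedly detouring through minors, rank semicontinuity, and ``continuity of the determinant on $S^{EM}$'' because you do not commit to the one fact you yourself state and then abandon: in the \emph{prodiscrete} topology on $S^G$ (each factor $S$ discrete), $p\in\Sigma(s)$ means exactly that for every finite $E\subset G$ there is some $g\in G$ with $p\vert_E=(gs)\vert_E$ \emph{on the nose}. There is no genuine limit on finite windows, and over an arbitrary field there is no topology on $S^E$ in which your determinant-continuity argument even makes sense. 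The paper simply uses this exact equality: given $p$ and $E_n$, pick $g_n$ with $p\vert_{E_n}=(g_ns)\vert_{E_n}$; then $f^+_{E_n,\,p\vert_{E_n}}=f^+_{E_n,\,(g_ns)\vert_{E_n}}$ literally, and the latter is surjective since $\sigma_{g_ns}$ is a translate of the surjective $\sigma_s$. Thus $(\im\sigma_p)_{E_n}=V^{E_n}$ for all $n$, so $\im\sigma_p$ is dense and, by Theorem~\ref{t:closed-image-linear}, equals $V^G$.

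Your stated worry about pre-injectivity (whether the certifying window can be ``pulled back to a uniformly bounded family'') dissolves for the same reason. The paper argues by contrapositive: if $\sigma_p$ has a nonzero kernel element $x$ supported in $E_n$, choose $m\geq n$ with $E_nM^2\subset E_m$ and $p\vert_{E_m}=(g_ms)\vert_{E_m}$. Then $\sigma_{g_ms}(x)$ agrees with $\sigma_p(x)=0$ on $E_nM$ (same local rules there) and vanishes off $E_nM$ (since $x$ is supported in $E_n$), so $\sigma_{g_ms}(x)=0$, contradicting pre-injectivity of the translate $\sigma_{g_ms}$. No minors, no continuity---just exact equality of finite patterns.
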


\begin{proof}
Up to enlarging $M$, we can suppose without loss of generality that $M=M^{-1}$ and $1_G \in M$.  
Since $G$ is countable, we can find an exhaustion of $G$ by finite subsets $(E_n)_{n \in \N}$ so that $G= \cup_{n \in \N} E_n$. 
Let $p \in \Sigma(s)$. Then there exists a sequence $(g_n)_{n \in \N}$ of elements in $G$  such that $(g_ns)\vert_{E_n} = p\vert_{E_n}$. 
Suppose first that $\sigma_s$ is surjective, then so is $\sigma_{g_ns}$ for every $n \in \N$ by \cite[Lemma~5.1]{phung-tcs}. Let $\Gamma = \im \sigma_p$ then we deduce from \eqref{e:local-image} 
that 
\begin{align*}
    \Gamma_{E_n} = \im  f_{E, p\vert_{E_n}}^+ =  \im  f_{E, (g_ns)\vert_{E_n}}^+ = (\im \sigma_{g_ns})\vert_{E_n}=V^{E_n}.
\end{align*}
 \par 
 Consequently, we find that $\Gamma$ is dense in $V^G$ with respect to the prodiscrete topology. As $\Gamma$ is also closed in $V^G$ by Lemma~\ref{t:closed-image-linear}, we deduce that $\Gamma= V^G$. Therefore, $\sigma_p$ is surjective for every $p \in \Sigma(s)$ and we conclude that $\sigma_s$ is stably surjective. 
 Since the converse is trivial, we conclude that $\sigma_s$ is surjective if and only if it is stably surjective. 
 \par 
Suppose on the contrary that $\sigma_s$ is pre-injective but $\sigma_p$ is not pre-injective. Then there exists $x \in V^G\setminus \{0^G\}$ asymptotic to $0^G$ such that $\sigma_p(x)=0^G$. Let us choose integers $m \geq n \geq 0$ such that $x \vert_{G\setminus E_n} =0^{G\setminus E_n}$ and $E_n M^2 \subset E_m$. Then since 
$p \vert_{E_m} = (g_ms)\vert_{E_m}$ and since $M$ is a memory set of both $\sigma_{g_ms}, \sigma_{p}$, we deduce from \eqref{e:induced-local-maps-general} that 
$\sigma_{g_ms}(x)(g)= \sigma_{p}(x)(g)=0$ for all $g \in E_nM$. On the other hand, for all $g \in G \setminus E_nM$, we have $\sigma_{g_ms}(x)(g)=0$ since $x\vert_{G \setminus E_n}=0^{G \setminus E_n}$.  It follows that $\sigma_{g_m s}(x)=0^G$ and thus  $\sigma_{g_ms}$ is not pre-injective. Consequently, $\sigma_s$ is not pre-injective by \cite[Lemma~5.1]{phung-tcs}, which is a contradiction. Hence, if $\sigma_s$ is pre-injective then it is stably pre-injective. The converse is obvious since $s \in \Sigma(s)$. Therefore, the proof of the lemma is complete.     
\end{proof}

\par 
By a similar proof, observe that surjectivity and pre-injectivity are also stable properties for NUCA with finite memory over an alphabet. 
\begin{lemma}
    \label{l:stable-pre-injective-surjective-nuca-}
Let $M$ be a finite subset of a countable group $G$. Let $A$ be a finite alphabet and let  $s \in S^G$ where $S = A^{A^M}$. Then $\sigma_s$ is surjective, resp. pre-injective, if and only if it is stably surjective, resp. stably pre-injective.  \qed 
    
\end{lemma}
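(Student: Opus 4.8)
The plan is to copy the proof of Lemma~\ref{l:stable-surjective-linear} almost verbatim, with two substitutions forced by the passage from a vector space alphabet to a finite set: in the surjectivity half, the closed image property Theorem~\ref{t:closed-image-linear} is replaced by the remark that $A^G$ is compact (as $A$ is finite) while $\sigma_p$ is continuous (it has finite memory $M$), so that $\im\sigma_p$ is compact, hence closed; and in the pre-injectivity half, the linear witness — a nonzero configuration asymptotic to $0^G$ killed by $\sigma_p$ — is replaced by a pair $x\neq y$ of asymptotic configurations with $\sigma_p(x)=\sigma_p(y)$, so that the cell-by-cell comparison is carried out between these two configurations. As in that proof I first make the harmless reduction to $M=M^{-1}$ with $1_G\in M$, fix an exhaustion $G=\bigcup_{n\in\N}E_n$ by finite subsets, and note that both converses are immediate since $s\in\Sigma(s)$; given $p\in\Sigma(s)$, I choose $g_n\in G$ with $(g_ns)\vert_{E_n}=p\vert_{E_n}$ for every $n$, using $\Sigma(s)=\overline{\{gs:g\in G\}}$.

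For the surjective case, assume $\sigma_s$ is surjective; then each $\sigma_{g_ns}$ is surjective by \cite[Lemma~5.1]{phung-tcs}. Since $(g_ns)\vert_{E_n}=p\vert_{E_n}$ gives $f^+_{E_n,\,p\vert_{E_n}}=f^+_{E_n,\,(g_ns)\vert_{E_n}}$, relation \eqref{e:local-image} yields, with $\Gamma=\im\sigma_p$,
\[
\Gamma_{E_n}=\im f^+_{E_n,\,p\vert_{E_n}}=\im f^+_{E_n,\,(g_ns)\vert_{E_n}}=(\im\sigma_{g_ns})\vert_{E_n}=A^{E_n}
\]
for every $n$; hence $\Gamma$ is dense in $A^G$, and being compact (a continuous image of the compact space $A^G$) it is closed, so $\Gamma=A^G$. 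Thus $\sigma_p$ is surjective for every $p\in\Sigma(s)$, i.e.\ $\sigma_s$ is stably surjective.

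For the pre-injective case, assume $\sigma_s$ is pre-injective and suppose, for contradiction, that $\sigma_p$ is not pre-injective for some $p\in\Sigma(s)$: pick distinct asymptotic $x,y\in A^G$ with $\sigma_p(x)=\sigma_p(y)$, then $n$ with $x\vert_{G\setminus E_n}=y\vert_{G\setminus E_n}$, and $m\geq n$ with $E_nM^2\subset E_m$. For $g\in E_nM\subset E_m$ one has $(g_ms)(g)=p(g)$, hence $\sigma_{g_ms}(x)(g)=\sigma_p(x)(g)=\sigma_p(y)(g)=\sigma_{g_ms}(y)(g)$ by \eqref{e:induced-local-maps-general}; for $g\notin E_nM$ one has $gM\cap E_n=\varnothing$ (here $M=M^{-1}$ is used), so $x\vert_{gM}=y\vert_{gM}$ and again $\sigma_{g_ms}(x)(g)=\sigma_{g_ms}(y)(g)$. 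Therefore $\sigma_{g_ms}(x)=\sigma_{g_ms}(y)$ with $x\neq y$ asymptotic, so $\sigma_{g_ms}$ is not pre-injective, whence $\sigma_s$ is not pre-injective by \cite[Lemma~5.1]{phung-tcs} — a contradiction.

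There is no genuine obstacle; the only points needing care are the translate bookkeeping — precisely what the normalizations $M=M^{-1}$ and $E_nM^2\subset E_m$ are for — and the replacement of the two linear ingredients by their set-theoretic analogues, namely compactness of $A^G$ in place of Theorem~\ref{t:closed-image-linear} and a pair of configurations in place of a single kernel element.
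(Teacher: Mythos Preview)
Your proof is correct and follows exactly the approach the paper intends: the statement is marked \qed with the remark ``by a similar proof'' to Lemma~\ref{l:stable-surjective-linear}, and your two substitutions (compactness of $A^G$ for the closed image, and a pair $x\neq y$ of asymptotic configurations for the kernel element) are precisely the adaptations required. The only cosmetic difference is that the paper, in the subsequent surjectivity-only lemma, cites the closed-image result \cite[Theorem~4.4]{phung-tcs} rather than invoking compactness directly, but these amount to the same thing.
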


Similarly, we obtain the equivalence between surjectivity and stable surjectivity for NUCA over finite alphabets.  

\begin{lemma}
Let $M$ be a finite subset of a countable group $G$. Let $A$ be a finite alphabet and $s \in S^G$ where $S = A^{A^M}$. Then $\sigma_s$ is surjective if and only if it is stably surjective. 
\end{lemma}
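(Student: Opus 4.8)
The plan is to follow the proof of Lemma~\ref{l:stable-surjective-linear} line by line, replacing the closed image property (Theorem~\ref{t:closed-image-linear}), which needed genuine work over possibly infinite-dimensional alphabets, by the compactness of the full shift over a finite alphabet. First, up to enlarging the memory, I would assume $1_G \in M$ and $M = M^{-1}$, and fix an exhaustion $G = \bigcup_{n \in \N} E_n$ by finite subsets with $E_n \subset E_{n+1}$. The converse implication is immediate since $s \in \Sigma(s)$, so the whole content is to show that surjectivity of $\sigma_s$ forces $\sigma_p$ to be surjective for every $p \in \Sigma(s)$.

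Fix $p \in \Sigma(s)$. Since $p$ lies in the closure of the $G$-orbit of $s$, there is a sequence $(g_n)_{n \in \N}$ in $G$ with $(g_n s)\vert_{E_n} = p\vert_{E_n}$ for all $n$. Assuming $\sigma_s$ surjective, each translate $\sigma_{g_n s}$ is surjective as well by \cite[Lemma~5.1]{phung-tcs}. The induced local map $f^+_{E_n, w}$ of \eqref{e:induced-local-maps} depends only on $w \in S^{E_n}$, so, writing $\Gamma = \im \sigma_p$ and using \eqref{e:local-image}, I would compute
\[
\Gamma_{E_n} = \im f^+_{E_n,\, p\vert_{E_n}} = \im f^+_{E_n,\, (g_n s)\vert_{E_n}} = (\im \sigma_{g_n s})\vert_{E_n} = A^{E_n}.
\]
Since this holds for every $n$ and $(E_n)$ exhausts $G$, the image $\Gamma$ is dense in $A^G$ for the prodiscrete topology.

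It then remains to see that $\Gamma$ is closed. Instead of invoking Theorem~\ref{t:closed-image-linear}, I would use that $A$ is finite: then $A^G$ is compact, $\sigma_p$ is continuous, hence $\Gamma = \sigma_p(A^G)$ is a compact — and therefore closed — subset of the Hausdorff space $A^G$. (Equivalently, one may rerun the inverse-limit argument of Theorem~\ref{t:closed-image-linear} directly, the nonemptiness of $\varprojlim_n W_n$ for the finite preimage sets $W_n$ being automatic by compactness rather than requiring \cite[Lemma~3.1]{cscp-jpaa}.) Combining density with closedness yields $\Gamma = A^G$, i.e.\ $\sigma_p$ is surjective, so $\sigma_s$ is stably surjective.

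I do not expect a real obstacle: the only step that was delicate in the linear case — establishing that the image is closed when the alphabet may be non-compact — trivializes for finite $A$. The only care needed is bookkeeping: enlarging $M$ so that $f^+$ is genuinely local, extracting the approximating sequence $(g_n)$ from $p \in \Sigma(s)$, and recording that $f^+_{E_n,\,\cdot}$ ignores the part of the configuration of local defining maps outside $E_n$.
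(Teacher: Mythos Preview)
Your proposal is correct and follows essentially the same approach as the paper: both rerun the proof of Lemma~\ref{l:stable-surjective-linear} verbatim, with the single change being the justification of the closed-image property. The paper cites \cite[Theorem~4.4]{phung-tcs} for this, whereas you argue directly from compactness of $A^G$ and continuity of $\sigma_p$; these are equivalent justifications and the rest of the argument is identical.
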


\begin{proof}
    The proof is the same, \emph{mutatis mutandis}, as the proof of Lemma~\ref{l:stable-surjective-linear} . The only needed modification is that we apply the closed image property for NUCA with finite memory over finite alphabets \cite[Theorem~4.4]{phung-tcs} instead of Lemma~\ref{t:closed-image-linear}. 
\end{proof}
\par 
The next result is a generalization of  \cite[Lemma~7.6]{phung-weakly} to the class of linear NUCA which says that a right-invertible linear NUCA is post-surjective. 

\begin{lemma}
\label{l:direct-stably-post-sur-nuca-linear}
 Let $M$ be a finite subset of a group $G$. Let $V$ be a   vector space   and let  $s,t \in S^G$ where $S = \LL(V^M, V)$. Suppose that  $\sigma_s \circ \sigma_t= \Id$. Then $\sigma_s$ is 
post-surjective.  
\end{lemma}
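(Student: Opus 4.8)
The plan is to exploit the hypothesis $\sigma_s \circ \sigma_t = \Id$ directly, using the fact that $\sigma_t$ provides an explicit "section" and that $\sigma_t$, having finite memory $M$, moves asymptotic configurations to asymptotic configurations. Let $x, y \in V^G$ with $y$ asymptotic to $\sigma_s(x)$; we must produce $z \in V^G$ asymptotic to $x$ with $\sigma_s(z) = y$. The obvious candidate is $z := x + \sigma_t(y - \sigma_s(x))$, using linearity of all maps involved. First I would check $\sigma_s(z) = y$: by linearity, $\sigma_s(z) = \sigma_s(x) + \sigma_s(\sigma_t(y - \sigma_s(x))) = \sigma_s(x) + (y - \sigma_s(x)) = y$, where the middle equality is exactly $\sigma_s \circ \sigma_t = \Id$. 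So the content is entirely in showing $z$ is asymptotic to $x$, i.e.\ that $\sigma_t(y - \sigma_s(x))$ is asymptotic to $0^G$.

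For this step, set $w := y - \sigma_s(x)$, which is asymptotic to $0^G$ by hypothesis, so $w\vert_{G \setminus E} = 0$ for some finite $E \subset G$. The key point is that $\sigma_t$ has finite memory $M$: for any $g \in G$, $\sigma_t(w)(g) = t(g)((g^{-1}w)\vert_M)$ depends only on $w\vert_{gM}$. Hence if $gM \cap E = \varnothing$, i.e.\ $g \notin EM^{-1}$, then $\sigma_t(w)(g) = t(g)(0^M) = 0$ because $t(g) \in \LL(V^M, V)$ is linear and therefore sends $0$ to $0$. Thus $\sigma_t(w)\vert_{G \setminus EM^{-1}} = 0^{G \setminus EM^{-1}}$, and $EM^{-1}$ is finite since both $E$ and $M$ are finite. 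Therefore $\sigma_t(w)$ is asymptotic to $0^G$, so $z = x + \sigma_t(w)$ is asymptotic to $x$, completing the argument.

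I do not expect any real obstacle here: the statement is essentially formal once one writes down the right candidate $z$ and uses that finite-memory linear maps preserve the property of being asymptotic to the zero configuration. The only mild subtlety is keeping the translation/memory bookkeeping ($gM \cap E = \varnothing \iff g \notin EM^{-1}$) correct, but this is routine and does not even require $G$ or $M$ to satisfy the finiteness/countability hypotheses beyond $M$ being finite, which is exactly what the statement assumes.
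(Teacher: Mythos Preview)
Your proof is correct and follows essentially the same approach as the paper: set $w = y - \sigma_s(x)$, take the candidate $x + \sigma_t(w)$, verify via linearity and $\sigma_s \circ \sigma_t = \Id$ that it maps to $y$, and observe that $\sigma_t(w)$ is asymptotic to $0^G$ because $\sigma_t$ has finite memory and $t(g)$ is linear. Your write-up actually supplies more detail than the paper on the last point (the paper simply asserts ``and thus so is $\sigma_t(w)$''), but the argument is the same.
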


\begin{proof}
Let $x, y \in V^G$ be two configurations such that $y$ and $\sigma_s(x)$ are asymptotic. Then $w=y-\sigma_s(x)$ is asymptotic to $0^G$ and thus 
so is $z=\sigma_t(w)$. Since $\sigma_s \circ \sigma_t= \Id$, we deduce that: 
\begin{align*}
\sigma_s(x+z) & = \sigma_s(x) + \sigma_s(z) 
\\& = 
\sigma_s(x) + \sigma_s(\sigma_t(y-\sigma_s(x))) 
 \\
 & = \sigma_s(x) + \sigma_s(\sigma_t(y)) - \sigma_s(\sigma_t(\sigma_s(x))) \\
 &= \sigma_s(x) + y -  \sigma_s(x) \\
 & = y. 
\end{align*}
\par 
As $x+z$ is asymptotic to $x$, we can thus conclude that $\sigma_s$ is stably post-surjective. 
\end{proof}

\par 
When the alphabet is a finite vector space, the above result can be strengthened as follows. 
\begin{lemma}
\label{l:direct-stably-post-surjective-nuca-linear-finite-a}
 Let $M$ be a finite subset of a countable group $G$. Let $V$ be a finite vector space   and let $s,t \in S^G$ where $S = \LL(V^M, V)$. Suppose that $\sigma_s \circ \sigma_t= \Id$. Then $\sigma_s$ is stably 
post-surjective.  
\end{lemma}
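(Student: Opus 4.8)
We need to show that if $\sigma_s \circ \sigma_t = \mathrm{Id}$ for linear NUCA with finite memory over a *finite* vector space $V$, then $\sigma_s$ is stably post-surjective, i.e. $\sigma_p$ is post-surjective for every $p \in \Sigma(s)$. By Lemma~\ref{l:direct-stably-post-sur-nuca-linear} we already know $\sigma_s$ itself is post-surjective; the issue is to propagate this to all limits $p$ of translates of $s$.

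The plan is to pass the identity $\sigma_s \circ \sigma_t = \mathrm{Id}$ to the limit. Let me think about what happens under translation. If $g \in G$, then conjugating by the shift gives $\sigma_{gs} \circ \sigma_{gt} = \mathrm{Id}$ as well (translations commute with the shift-equivariant structure in the expected way — this is the NUCA analogue of \cite[Lemma~5.1]{phung-tcs} which was used in the previous lemmas). So for every $g \in G$ the pair $(gs, gt)$ again satisfies the composition identity. Now take $p \in \Sigma(s)$ and a sequence $(g_n)$ with $(g_n s)|_{E_n} = p|_{E_n}$ on an exhaustion $(E_n)$ of $G$. The configurations $g_n t$ live in a *compact* space $S^G$ — here is where finiteness of $V$ is essential, since $S = \LL(V^M, V)$ is a finite set and $S^G$ is compact in the prodiscrete topology. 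So after passing to a subsequence we may assume $g_n t \to q$ for some $q \in S^G$, and in fact $q \in \Sigma(t)$.

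**Passing the identity to the limit.**

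The key step is then: $\sigma_s \circ \sigma_t = \mathrm{Id}$ is a "local" identity in the sense that it can be checked on finite windows. Concretely, because $M$ is a memory set for both factors, $(\sigma_s \circ \sigma_t)(x)|_E$ depends only on $s|_{EM}$, $t|_{EM^2}$ (roughly), and $x|_{EM^2}$; equivalently this composition identity is equivalent to a family of identities among the induced local maps $f^+_{E, \cdot}$. Since $(g_n s, g_n t) \to (p, q)$ in the prodiscrete topology and each pair satisfies the identity, and since for each fixed finite $E$ the relevant local maps of $(g_n s, g_n t)$ stabilize to those of $(p, q)$ for $n$ large, we conclude $\sigma_p \circ \sigma_q = \mathrm{Id}$. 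Then Lemma~\ref{l:direct-stably-post-sur-nuca-linear} applied to the pair $(p, q)$ gives that $\sigma_p$ is post-surjective. Since $p \in \Sigma(s)$ was arbitrary, $\sigma_s$ is stably post-surjective.

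**The main obstacle.**

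The real work is the limiting argument for the composition identity: verifying that "$\sigma_a \circ \sigma_b = \mathrm{Id}$" is a closed condition on $(a,b) \in S^G \times S^G$ in the prodiscrete topology, which amounts to writing the composition explicitly via induced local maps and checking that on a fixed finite window $E$ the value $(\sigma_a \circ \sigma_b)(x)|_E$ depends only on $a|_{EM}$, $b|_{(EM)M} = b|_{EM^2}$ and $x|_{EM^2}$. One must be a little careful that $\sigma_b$ has memory $M$, so $\sigma_b(x)|_{EM}$ is determined by $x|_{EM^2}$ and $b|_{EM}$, and then $\sigma_a$ applied to that, restricted to $E$, needs $b$ on $EM \cdot M = EM^2$; the inclusion $EM^2$ finite is what makes the condition depend on finitely many coordinates. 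Compactness of $S^G$ (finiteness of $V$) supplies the convergent subsequence $g_n t \to q$, and closedness of the condition then does the rest. The alternative and perhaps cleaner route — which I would also mention — is to avoid limits entirely: show directly that for each $p \in \Sigma(s)$ one can *construct* a $q \in S^G$ with $\sigma_p \circ \sigma_q = \mathrm{Id}$ by using that $(g_n s)|_{E_n M^2}$ eventually agrees with $p$ and choosing $q$ to be a prodiscrete limit of $g_n t$; this is essentially the same argument packaged as an existence statement. Either way, the crux is the finiteness of $V$ feeding compactness of the space of local rules, exactly as in the proof of \cite[Theorem~A]{phung-tcs}.
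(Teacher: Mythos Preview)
Your proof is correct and follows the same strategy as the paper: for each $p \in \Sigma(s)$, produce $q \in \Sigma(t)$ with $\sigma_p \circ \sigma_q = \Id$ (via compactness of $S^G$ when $V$ is finite and the fact that the composition identity is a prodiscretely closed condition on $(a,b)$), then apply Lemma~\ref{l:direct-stably-post-sur-nuca-linear}. The paper's proof is shorter only because it cites \cite[Theorem~11.1]{phung-tcs} as a black box for the existence of such a $q$, whereas you unpack that compactness-and-limit argument inline; the content is the same.
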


\begin{proof}
Let $p \in \Sigma(s)$. Then since 
 $\sigma_s \circ \sigma_t= \Id$ and since $V$ is finite, 
 there exists by \cite[Theorem~11.1]{phung-tcs} a configuration $q \in \Sigma(t)$ such that $\sigma_p \circ \sigma_q= \Id$. Consequently, 
we infer from Lemma~\ref{l:direct-stably-post-sur-nuca-linear} that $\sigma_{p}$ is post-surjective. As the configuration $p \in \Sigma(s)$ can be arbitrary, we conclude that $\sigma_s$ is stably post-surjective and the proof is complete.  
\end{proof}

\section{Dual of linear NUCA}
\label{s:dual-linear-nuca}
\subsection{Dual configurations of local defining maps}

Fix a group $G$ and a vector space $V$ over a field. 
Let $M \subset G$ be a  finite subset and let $s \in S^G$ where $S =  \LL(V^M, V)$. Then by linearity, we define $s(g,m) \in \End(V)$ for every $m \in M$ and $g \in G$ by the formula: 
\begin{equation} 
s(g)(v) = \sum_{m \in M} s(g,m)v(m), \quad \text{ for all } v \in V^M.     
\end{equation} 
\par 
By extending $s$ by the zero map outside of $M$, i.e., by setting $s(g,m)=0$ for $m \in G \setminus M$, we obtain for every $v \in V^G$ that: 
\begin{equation}
    s(g)(v) = \sum_{h \in G} s(g,h)v(h). 
\end{equation}

\par 
\begin{definition}
\label{d:dual-config-local-map}
Let $M$ be a  finite subset of a group  $G$. Let  $V$ be a vector space over a field and let $S = \LL(V^M, V)$. Let $T = \LL(V^{*M^{-1}}, V^*)$ where $V^*$ is the dual space of $V$. We use the right superscript $^\mathsf{T}$ to denote the transpose of linear maps. Let $s \in S^G$.  The \emph{dual configuration of local defining maps} $s^* \in T^G$  is defined by setting for all $g,m \in G$: 
\begin{equation} 
\label{e:def-dual-s}
s^*(g,m)\coloneqq  
    s(gm,m^{-1})^\mathsf{T}. 
\end{equation} 
\par 
Moreover, we say that $\sigma_s^*\coloneqq  \sigma_{s^*}$ is the \emph{dual} linear NUCA of $\sigma_s$. 
\end{definition}
\par 

Consequently, with the above notations, we find for every $v \in V^{*G}$ that: 
\begin{equation}
\label{e:def-dual-s-remark}
    s^*(g)(v) = \sum_{m \in G} s^*(g,m)v(gm) = \sum_{m \in G} s(gm,m^{-1})^\mathsf{T}v(gm). 
\end{equation}
\par
Note also that $s^* \in T^G$ since \eqref{e:def-dual-s} implies that $s^*(g,m) \neq 0$ only if $m^{-1} \in M$ only if $m \in M^{-1}$. 
\par 

\begin{lemma}
\label{l:dual-dual}
Let $M$ be a  finite subset of a group  $G$. Let  $V$ be a vector space over a field and let $S = \LL(V^M, V)$. Then for every $s \in S^G$, we have $s^{**}=s$ and in particular, 
$\sigma_{s}^{**}=\sigma_s$. 
\end{lemma}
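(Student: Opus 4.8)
The plan is to unwind Definition~\ref{d:dual-config-local-map} twice and reduce the whole statement to the elementary fact that the transpose of the transpose of a linear map is the map itself, under the canonical identification $V \cong V^{**}$.

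First I would record the shapes of the objects. Extending by zero, $s \in S^G$ gives maps $s(g,m) \in \End(V)$ for all $g,m \in G$ (with $s(g,m)=0$ for $m \notin M$); the dual $s^* \in T^G$, with $T = \LL(V^{*M^{-1}}, V^*)$, is given by $s^*(g,m) = s(gm,m^{-1})^{\mathsf{T}}$; applying the construction once more, $s^{**} = (s^*)^*$ lives in $\LL\big(V^{**(M^{-1})^{-1}}, V^{**}\big)^G = \LL(V^{**M}, V^{**})^G$, which under $V^{**} \cong V$ is again $S^G$, so the asserted equality $s^{**}=s$ makes sense.

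The core is a one-line index chase valid for arbitrary $g,m \in G$. Applying \eqref{e:def-dual-s} with $s^*$ in place of $s$ and then \eqref{e:def-dual-s} again to evaluate the inner term,
\[
s^{**}(g,m) = s^*(gm,m^{-1})^{\mathsf{T}} = \Big( s\big((gm)(m^{-1}),\,(m^{-1})^{-1}\big)^{\mathsf{T}} \Big)^{\mathsf{T}} = \big( s(g,m)^{\mathsf{T}} \big)^{\mathsf{T}} = s(g,m),
\]
where the last step uses $(f^{\mathsf{T}})^{\mathsf{T}} = f$; note the zero entries are automatically preserved, so no case analysis is needed. Since a NUCA depends on its configuration of local defining maps only through the maps $s(g,\cdot)$ (Definition~\ref{d:most-general-def-asyn-ca}), we get $\sigma_{s^{**}} = \sigma_s$, and then $\sigma_s^{**} = (\sigma_{s^*})^* = \sigma_{(s^*)^*} = \sigma_{s^{**}} = \sigma_s$ by applying $\sigma_s^* = \sigma_{s^*}$ twice.

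The only delicate point is the identification $V \cong V^{**}$ and the compatibility $(f^{\mathsf{T}})^{\mathsf{T}} = f$: precisely, $(f^{\mathsf{T}})^{\mathsf{T}} \circ \iota_V = \iota_W \circ f$ for the canonical evaluation maps $\iota_V \colon V \to V^{**}$, $\iota_W \colon W \to W^{**}$, which is an isomorphism exactly in the finite-dimensional case relevant here; so in that setting the computation is literal, and otherwise $s^{**}=s$ is read through these canonical maps. I expect no other obstacle.
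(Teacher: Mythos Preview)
Your proposal is correct and follows essentially the same route as the paper: apply the defining relation \eqref{e:def-dual-s} twice to get $s^{**}(g,m)=s^*(gm,m^{-1})^{\mathsf T}=s(g,m)$, then deduce $\sigma_s^{**}=\sigma_{s^{**}}=\sigma_s$. You are more explicit than the paper about the intermediate double-transpose step and about the canonical identification $V\cong V^{**}$ (which the paper silently assumes despite not requiring $V$ finite-dimensional in the hypothesis), but the argument is the same.
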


\begin{proof}
Let $g,m \in G$ then we infer directly from the relation  \eqref{e:def-dual-s} that: 
\begin{align*}
    s^{**}(g,m) = s^*(gm, m^{-1})^\mathsf{T}= s(g, m). 
\end{align*}
\par 
Consequently, we deduce that 
$s^{**}=s$ and 
it follows from the definition that we have:
\begin{equation*}
    \sigma_{s}^{**}=\sigma_{s^*}^*= \sigma_{s^{**}}=\sigma_s.
\end{equation*}
\par 
The proof is thus complete. 
\end{proof}

\par 

\begin{lemma}
\label{l:duality-family} 
    Let $M$ be a finite  subset of a countable group $G$. Let  $V$ be a vector space over a field and let $S = \LL(V^M, V)$. Then for every $s \in S^G$, we have mutually inverse canonical $G$-equivariant bijections $\varphi \colon \Sigma(s) \to \Sigma(s^*)$, $ p \mapsto p^*$ and 
    $\phi \colon \Sigma(s^*) \to \Sigma(s)$, $q \mapsto q^*$. 
\end{lemma}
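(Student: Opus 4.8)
The plan is to build the map $\varphi$ directly from the definition of $\Sigma(s)$ as a closure in the prodiscrete topology, using the fact that the operation $s \mapsto s^*$ is continuous and $G$-equivariant in a suitable sense. First I would make precise what ``$p \mapsto p^*$'' means for $p \in \Sigma(s)$: since $\Sigma(s) \subset S^G$ and the dual operation was defined on all of $S^G$ by the pointwise formula \eqref{e:def-dual-s}, namely $p^*(g,m) = p(gm,m^{-1})^{\mathsf T}$, the assignment $p \mapsto p^*$ is a well-defined map $S^G \to T^G$. The key observation is that this map is continuous for the prodiscrete topologies: the value $p^*(g)$ depends only on the finitely many coordinates $\{p(gm) : m \in M^{-1}\}$ of $p$, so $p \mapsto p^*$ factors through finite projections and is therefore continuous. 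Moreover it carries $G$-translates to $G$-translates: one checks from \eqref{e:def-dual-s} that $(hp)^* = h(p^*)$ for all $h \in G$, by the computation $(hp)^*(g,m) = (hp)(gm,m^{-1})^{\mathsf T} = p(h^{-1}gm, m^{-1})^{\mathsf T} = p^*(h^{-1}g, m) = (h p^*)(g,m)$.

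With these two facts in hand, I would argue as follows. The map $\psi \colon S^G \to T^G$, $p \mapsto p^*$, is continuous and $G$-equivariant, hence it sends the orbit closure of $s$ into the orbit closure of $\psi(s) = s^*$: indeed $\psi(\{gs : g \in G\}) = \{g s^* : g \in G\}$ by equivariance, and taking closures together with continuity of $\psi$ gives $\psi(\overline{\{gs\}}) \subset \overline{\psi(\{gs\})} = \overline{\{g s^*\}}$, i.e.\ $\psi(\Sigma(s)) \subset \Sigma(s^*)$. This defines $\varphi \colon \Sigma(s) \to \Sigma(s^*)$, $p \mapsto p^*$, and it is continuous and $G$-equivariant as a restriction of $\psi$. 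Symmetrically, applying the same reasoning to $s^* \in T^G$ (whose dual is taken with respect to the memory $M^{-1}$ and the alphabet $V^*$, and whose bidual recovers the original data), we obtain $\phi \colon \Sigma(s^*) \to \Sigma(s)$, $q \mapsto q^*$. Finally, Lemma~\ref{l:dual-dual} gives $p^{**} = p$ for every $p \in S^G$, so $\phi \circ \varphi = \Id_{\Sigma(s)}$ and $\varphi \circ \phi = \Id_{\Sigma(s^*)}$; hence $\varphi$ and $\phi$ are mutually inverse $G$-equivariant bijections.

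The only genuinely delicate point is the inclusion $\psi(\Sigma(s)) \subset \Sigma(s^*)$, and more precisely that $\psi$ restricted to $\Sigma(s)$ is a \emph{bijection} onto $\Sigma(s^*)$ rather than merely a map into it; this is exactly what the bidual identity $\psi \circ \psi = \Id$ (Lemma~\ref{l:dual-dual}) supplies, since it forces $\psi|_{\Sigma(s)}$ and $\psi|_{\Sigma(s^*)}$ to be two-sided inverses, in particular surjective. Everything else — continuity from the finiteness of $M$, and $G$-equivariance from the explicit formula \eqref{e:def-dual-s} — is a routine unwinding of definitions, so I would state those verifications briefly and spend the written proof on the equivariance computation $(hp)^* = h(p^*)$ and the closure argument.
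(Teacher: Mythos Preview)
Your proof is correct and follows essentially the same approach as the paper's: both rely on the $G$-equivariance computation $(hp)^* = h(p^*)$ (identical to the paper's), the fact that $p^*(g)$ depends only on the finitely many coordinates $p(gm)$ for $m \in M^{-1}$, and Lemma~\ref{l:dual-dual} for the bidual identity. The only difference is packaging: the paper verifies $p^* \in \Sigma(s^*)$ by choosing an explicit sequence $(g_n)$ with $p\vert_{E_nM} = (g_n s)\vert_{E_nM}$ and checking coordinates, whereas you phrase the same step as ``$\psi$ is continuous and $G$-equivariant, hence maps orbit closures into orbit closures,'' which is a cleaner formulation of the identical argument.
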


\begin{proof}
Let $(E_n)_{n \in \N}$ be an exhaustion of $G$ by finite groups. Then $(E_nM)_{n \in \N}$ is also an exhaustion of $G$ by finite groups. 
Let $p \in \Sigma(s)$ and $q = p^*$. Then there exists a sequence $(g_n)_{n \in \N}$ of elements in $G$ such that we have $p\vert_{E_nM} = (g_ns)\vert_{E_nM}$ for all $n \in \N$. 
\par 
We claim that $q \in \Sigma(s^*)$. 
Indeed, for $m \in M$ and $g \in E_n$ with $n \in \N$, we find that $s^*(g_n^{-1}g,m) = s(g_n^{-1}gm, m^{-1})^\mathsf{T}$ and:  
\begin{align*} 
q(g,m) & = p(gm,m^{-1})^\mathsf{T}= (g_ns)(gm, m^{-1})^\mathsf{T}
\\ &=  s(g_n^{-1}gm, m^{-1})^\mathsf{T} = s^*(g_n^{-1}g,m)\\
& = (g_ns^*)(g,m).
\end{align*} 
\par 
Consequently, we deduce that $q\vert_{E_n M} = (g_ns^*)\vert_{E_nM}$ for every $n \in \N$ and thus 
$q \in \Sigma(s^*)$ as claimed. Similarly, since $s^{**}=s$ by Lemma~\ref{l:dual-dual}, we find that $q^* \in \Sigma(s)$ for all $q \in \Sigma(s^*)$. Again by Lemma~\ref{l:dual-dual}, 
we have $p^{**}=p$ and $q^{**}=q$ for all $p \in \Sigma(s)$ and $q \in \Sigma(s^*)$. In other words, $\phi \circ \varphi= \Id$ and $\varphi \circ \phi = \Id$. 
\par 
Finally, we can check that $\varphi$ and $\phi$ are $G$-equivariant as follows. For all $p \in \Sigma(s)$, $q=p^*$, $g, h\in G$, and $m \in M$, we find that: 
\begin{align*} 
     (hp)^*(g, m) = (hp)(gm, m^{-1})^\mathsf{T} = p(h^{-1}gm, m^{-1})^\mathsf{T}=(hp^*)(g,m).  
\end{align*} 
and the conclusion of the lemma follows. 
\end{proof}

\begin{lemma}
    \label{l:duality-functoriality} 
    Let $M$ be a finite  subset of a countable group $G$. Let  $V$ be a vector space over a field and let $S = \LL(V^M, V)$. Then for every $s, t \in S^G$, we have $(\sigma_s \circ \sigma_t)^*= \sigma_{t^*}\circ \sigma_{s^*}$. 
\end{lemma}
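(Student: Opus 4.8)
The plan is to compute both sides explicitly through their configurations of local defining maps and to match the two by a single change of summation variable.

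First I would fix the extend-by-zero convention already used in \eqref{e:def-dual-s-remark}: writing $s(g,h)\in\End(V)$ for $h\in M$ and $s(g,h)=0$ for $h\in G\setminus M$, a linear NUCA $\sigma_s$ with finite memory $M$ satisfies $\sigma_s(x)(g)=\sum_{h\in G}s(g,h)\,x(gh)$ for every $x\in V^G$, a finite sum. Composing, for all $x\in V^G$ and $g\in G$,
\[
(\sigma_s\circ\sigma_t)(x)(g)=\sum_{h\in G}s(g,h)\,\sigma_t(x)(gh)=\sum_{h,k\in G}s(g,h)\,t(gh,k)\,x(ghk),
\]
and reindexing by $n=hk$ exhibits $\sigma_s\circ\sigma_t$ as the linear NUCA $\sigma_r$ with finite memory $MM$ (after enlarging $M$ so that $1_G\in M$, hence $M\subset MM$) whose local defining maps are
\[
r(g,n)=\sum_{h\in G}s(g,h)\,t(gh,h^{-1}n),\qquad g,n\in G .
\]

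Next I would apply \eqref{e:def-dual-s} to $r$ and use that transposition reverses products:
\[
r^*(g,m)=r(gm,m^{-1})^{\mathsf{T}}=\Bigl(\sum_{h\in G}s(gm,h)\,t(gmh,h^{-1}m^{-1})\Bigr)^{\mathsf{T}}=\sum_{h\in G}t(gmh,h^{-1}m^{-1})^{\mathsf{T}}\,s(gm,h)^{\mathsf{T}}.
\]
Independently, the same composition formula applied to $\sigma_{t^*}$ and $\sigma_{s^*}$ (which have memory $M^{-1}$) shows that $\sigma_{t^*}\circ\sigma_{s^*}$ has memory $M^{-1}M^{-1}=(MM)^{-1}$ and local defining maps $(g,m)\mapsto\sum_{h\in G}t^*(g,h)\,s^*(gh,h^{-1}m)$. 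Substituting $t^*(g,h)=t(gh,h^{-1})^{\mathsf{T}}$ and, from \eqref{e:def-dual-s}, $s^*(gh,h^{-1}m)=s\bigl(gh\cdot h^{-1}m,\,(h^{-1}m)^{-1}\bigr)^{\mathsf{T}}=s(gm,m^{-1}h)^{\mathsf{T}}$, this becomes $\sum_{h\in G}t(gh,h^{-1})^{\mathsf{T}}\,s(gm,m^{-1}h)^{\mathsf{T}}$.

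Finally, reindexing this last sum by $h\mapsto mh$ turns it into $\sum_{h\in G}t(gmh,h^{-1}m^{-1})^{\mathsf{T}}\,s(gm,h)^{\mathsf{T}}$, which is exactly the expression for $r^*(g,m)$ found above. Since $(\sigma_s\circ\sigma_t)^*=\sigma_{r^*}$ and $\sigma_{t^*}\circ\sigma_{s^*}$ are then linear NUCA over $V^{*G}$ with the same configuration of local defining maps, they coincide. I do not anticipate any genuine obstacle: the argument is pure bookkeeping, and the only points needing care are fixing the extend-by-zero convention so that the formula for $r$ is literally valid with finite sums, and carrying out the two group-element manipulations — the rewrite of $s^*(gh,h^{-1}m)$ in terms of $s$, and the reindexing $h\mapsto mh$ — without inversion errors. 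Alternatively, once the natural pairing with its adjunction $\langle\sigma_s(x),y\rangle=\langle x,\sigma_{s^*}(y)\rangle$ is available, functoriality is immediate from non-degeneracy, but the direct computation above is self-contained.
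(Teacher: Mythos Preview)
Your proof is correct and follows essentially the same route as the paper: both compute the configuration of local defining maps of $\sigma_s\circ\sigma_t$ explicitly, take its dual via \eqref{e:def-dual-s}, independently compute the configuration of local defining maps of $\sigma_{t^*}\circ\sigma_{s^*}$, and then match the two expressions by a single change of summation variable (the paper sets $h=mk$ in the first expression, you substitute $h\mapsto mh$ in the second, which amounts to the same reindexing). The only cosmetic difference is the normalization of $M$ (you enlarge so that $1_G\in M$, the paper so that $M=M^{-1}$), which is immaterial to the argument.
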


\begin{proof}
Up to enlarging $M$, we can suppose without loss of generality that $M=M^{-1}$. 
By the proof of \cite[Lemma~6.2]{phung-tcs}, we know that $M^2$ is a memory set of $\sigma_s \circ \sigma_t$. 
It follows that $(M^2)^{-1}=(M^{-1})^2=M^2$ is a memory set of $(\sigma_s \circ \sigma_t)^*$ and of $ \sigma_{t^*}\circ \sigma_{s^*}$. 
Moreover, for $p \in Q^G$ where  $p(g) \in A^{A^{M^2}}$  is  the map given for every $g \in G$  by: 
\[ 
p(g) =   s(g) \circ f_{M, g^{-1}t\vert_{M}}^+ , 
\]  
then we have $\sigma_s \circ \sigma_t= \sigma_p$ and thus $(\sigma_s \circ \sigma_t)^*= \sigma_p^*= \sigma_{p^*}$.  
Similarly, 
for $q \in Q^G$ where  $q(g) \in A^{A^{M^2}}$ is defined for every $g \in G$ by  
\[ 
q(g) = t^*(g) \circ f_{M, g^{-1}s^*\vert_{M}}^+, 
\] 
then we have $\sigma_{t^*} \circ \sigma_{s^*}= \sigma_q$. We claim that $p^*=q$. 
Indeed, for all $g, m \in G$,  
we have:  
 \begin{align*}
     p^*(g,m) & = p(gm, m^{-1})^\mathsf{T}\\ & = \sum_{k \in G} \left(s(gm, k) ((gm)^{-1}t)(k, k^{-1}m^{-1}) \right)^\mathsf{T}
     \\ & = \sum_{k \in G} \left(s(gm, k) t(gmk, k^{-1}m^{-1}) \right)^\mathsf{T}
     \\ & = \sum_{h \in G}  t(gh, h^{-1})^\mathsf{T} s(gm, m^{-1}h)^\mathsf{T} & (h=mk).   
 \end{align*}
 \par 
 On the other hand, we find that: 
 \begin{align*}
     q(g,m) & = \sum_{h \in G} t^*(g,h) (g^{-1}s^*) (h, h^{-1}m) \\
      & = \sum_{h \in G} t(gh, h^{-1})^\mathsf{T}s^* (gm, m^{-1}h)\\
      & = \sum_{h \in G} t(gh, h^{-1})^\mathsf{T} s(gm, m^{-1}h)^\mathsf{T}
      \\ & = p^*(g,m).
 \end{align*}
 \par 
 Therefore, since $g,m \in G$ are arbitrary, we deduce that $p=q^*$ as claimed. Consequently, $(\sigma_s \circ \sigma_t)^*= \sigma_{t^*}\circ \sigma_{s^*}$ and the proof is complete. 
\end{proof}

\subsection{The natural pairing} 

Let $V$ be a finite dimensional vector space over a field $k$ and let $G$ be a group.  Let $V^*G \coloneqq \bigoplus_{g \in G} V^*$ be the subset of configurations of $V^{*G}$ asymptotic to $0^G$. 
Then we have a natural pairing (cf.~\cite{bartholdi-duality-2017}) 
defined for all $(\omega, c ) \in V^*G \times V^G$ by the formula: 
\begin{align*}
\langle \cdot  \vert \cdot \rangle \colon V^*G \times V^G & \longrightarrow k \\ 
(\omega  , c) & \longmapsto  \langle \omega \vert c \rangle \coloneqq \sum_{g \in G} \langle \omega(g) \vert c(g) \rangle
\end{align*}
where we write $\langle \omega(g) \vert c(g) \rangle = \omega(g)(c(g))$. It is immediate that the pairing is nondegenerate since it is pointwise and $V$ is finite dimensional.  

\begin{lemma}
\label{l:duality-pairing}
Let $M$ be a finite  subset of a group  $G$. Let  $V$ be a vector space over a field and let $S = \LL(V^M, V)$. Then we have: 
\begin{equation} 
\langle \sigma_s^* (\omega) \vert c \rangle  = \langle   \omega \vert  \sigma_s(c) \rangle \quad \text{ for all }  s \in S^G, \omega \in V^*G, \text{ and } c \in V^G. 
\end{equation} 
\end{lemma}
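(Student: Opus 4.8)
The plan is to prove the pairing identity $\langle \sigma_s^*(\omega) \mid c \rangle = \langle \omega \mid \sigma_s(c) \rangle$ by directly expanding both sides using the explicit formulas \eqref{e:def-dual-s-remark} and Definition~\ref{d:most-general-def-asyn-ca}, then reindexing one of the resulting double sums to match the other. First I would note that everything is well defined: since $\omega \in V^*G$ has finite support and $M$ is finite, $\sigma_s^*(\omega)$ also has finite support (its value at $g$ depends only on $\omega$ on the finite set $gM^{-1}$), so $\langle \sigma_s^*(\omega)\mid c\rangle$ is a finite sum; and $\langle \omega \mid \sigma_s(c)\rangle$ is a finite sum because $\omega$ has finite support. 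So both sides are honest finite sums of scalars and I may freely rearrange.

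Next I would expand the right-hand side: $\langle \omega \mid \sigma_s(c)\rangle = \sum_{g \in G} \omega(g)\big(\sigma_s(c)(g)\big) = \sum_{g \in G} \omega(g)\Big(\sum_{h \in G} s(g,h)c(h)\Big) = \sum_{g,h \in G} \omega(g)\big(s(g,h)c(h)\big)$, using the extension of $s$ by zero outside $M$. Then I would expand the left-hand side: $\langle \sigma_s^*(\omega)\mid c\rangle = \sum_{k \in G} \big(\sigma_s^*(\omega)(k)\big)(c(k)) = \sum_{k \in G}\Big(\sum_{m \in G} s(km, m^{-1})^{\mathsf T}\omega(km)\Big)(c(k)) = \sum_{k,m \in G}\big(s(km,m^{-1})^{\mathsf T}\omega(km)\big)(c(k))$. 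By the defining property of the transpose, $\big(s(km,m^{-1})^{\mathsf T}\omega(km)\big)(c(k)) = \omega(km)\big(s(km,m^{-1})c(k)\big)$. Now substitute $g = km$ and $h = k$ (equivalently $m = k^{-1}g = h^{-1}g$); as $(k,m)$ ranges over $G \times G$ so does $(g,h) = (km, k)$, bijectively. Under this substitution the left-hand summand becomes $\omega(g)\big(s(g, h^{-1}g \cdot \text{(...)})c(h)\big)$ — more precisely $s(km, m^{-1}) = s(g, (h^{-1}g)^{-1}) = s(g, g^{-1}h)$, and since $s$ was extended by zero, $s(g,g^{-1}h)$ is nonzero only when $g^{-1}h \in M$, i.e. $h \in gM$, which matches the support condition in the right-hand expansion (there $s(g,h)\neq 0$ only when $h \in gM$ — wait, one must be careful: the right side has $s(g,h)$ while the reindexed left side has $s(g, g^{-1}h)$, so the matching is not literal).

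Here is the one genuinely delicate point, and I expect it to be the main obstacle: reconciling the two memory conventions. In the right-hand expansion $\sigma_s(c)(g) = \sum_m s(g,m)c(gm)$ if we use the "relative" indexing $v(m) = c(gm)$, or $\sum_h s(g,h)c(h)$ in "absolute" indexing — these differ, and equation~\eqref{e:def-dual-s} uses the relative convention on one side. So I would be scrupulous about which convention equations \eqref{e:def-dual-s} and \eqref{e:def-dual-s-remark} employ: \eqref{e:def-dual-s-remark} reads $s^*(g)(v) = \sum_{m\in G} s(gm,m^{-1})^{\mathsf T}v(gm)$, which is the \emph{absolute} convention for $s^*$ applied to $v \in V^{*G}$, so $\sigma_{s^*}(\omega)(k) = s^*(k)((k^{-1}\omega)|_{M^{-1}})$ must be unwound via Definition~\ref{d:most-general-def-asyn-ca} to give $\sum_{m} s^*(k,m)\omega(km) = \sum_m s(km,m^{-1})^{\mathsf T}\omega(km)$. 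Granting that, the substitution $g = km$, $h = k$ turns the left sum into $\sum_{g,h} \omega(g)\big(s(g, g^{-1}h)\,c(h)\big)$ wait — we have $c(k) = c(h)$, good, and $s(km,m^{-1}) = s(g, m^{-1})$ where $m = k^{-1}g = h^{-1}g$, so $m^{-1} = g^{-1}h$; thus the summand is $\omega(g)\big(s(g,g^{-1}h)c(h)\big)$. Meanwhile the right side, expanded in the \emph{relative} convention, is $\sum_{g,m'} \omega(g)\big(s(g,m')c(gm')\big)$; substituting $h = gm'$, i.e. $m' = g^{-1}h$, gives $\sum_{g,h}\omega(g)\big(s(g,g^{-1}h)c(h)\big)$ — identical. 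So the two sides agree term by term after the appropriate reindexings, and the proof concludes. I would write this up carefully, fixing one convention (relative) throughout and showing each side reduces to $\sum_{g,h\in G}\omega(g)\big(s(g,g^{-1}h)c(h)\big)$, noting that finiteness of the supports of $\omega$ and $M$ justifies all the rearrangements.
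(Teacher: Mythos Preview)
Your proposal is correct and follows essentially the same approach as the paper's proof: expand both sides using the defining formulas, invoke the transpose identity $\langle A^{\mathsf T}\varphi \vert v\rangle = \langle \varphi \vert Av\rangle$, and reindex the double sum (the paper does this as a single chain from $\langle \sigma_s^*(\omega)\vert c\rangle$ to $\langle \omega\vert\sigma_s(c)\rangle$ via the substitutions $h=gm$ and then $k=h^{-1}g$, whereas you reduce each side separately to the common expression $\sum_{g,h}\omega(g)\big(s(g,g^{-1}h)c(h)\big)$). Your explicit remark that $\sigma_{s^*}(\omega)$ has finite support, so that all sums are finite and rearrangement is justified, is a useful clarification that the paper leaves implicit.
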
 

\begin{proof}
For every $s \in S^G$, $\omega \in V^*G$, and $c \in V^G$, we infer from the linearity and the definition of $\sigma_{s}^*$  the following equalities of \emph{finite} sums:
\begin{align*}
\langle \sigma_s^* (\omega) \vert c \rangle  & = \sum_{g \in G} 
\langle \sigma_{s^*}  (\omega)(g)  \vert c(g) \rangle \\
& = \sum_{g \in G} 
\langle {s^*}(g)(g^{-1}\omega)  \vert c(g) \rangle \\
& = 
\sum_{g \in G} \sum_{m \in G} \langle s(gm,m^{-1})^\mathsf{T} \omega(gm)
\vert c(g) \rangle 
\\
& = \sum_{g \in G} \sum_{h \in G} \langle s(h, h^{-1}g)^\mathsf{T} \omega(h)
\vert c(g) \rangle & (h\coloneqq gm)
\\
& =  \sum_{g \in G} \sum_{h \in G} \langle \omega(h)
\vert s(h, h^{-1}g) c(g) \rangle 
\\
& = 
 \sum_{h \in G} \sum_{k \in G} \langle \omega(h)
\vert s(h, k) c(hk)  \rangle & (k\coloneqq h^{-1}g) 
\\
& =  \sum_{h \in G}   \langle \omega(h) 
\vert \sigma_s(c)(h)  \rangle 
\\ 
& = \langle \omega \vert \sigma_s(c) \rangle. 
\end{align*} 
 \par
We conclude that $\langle \sigma_s^* (\omega) \vert c \rangle = \langle \omega \vert \sigma_s(c) \rangle$ and the proof is complete. 
\end{proof} 

\section{Main lemma} 
\label{s:main-lemma}
The following elementary lemma will be useful in the sequel. 

\begin{lemma}
\label{l:closedness-finite-total-linear}
Let $V$ be a finite dimensional vector space and let $G$ be a countable group. Suppose that $\Gamma \subsetneq  A^G$ is a closed linear subspace with respect to the prodiscrete topology. Then there exists a finite subset $\Omega \subset G$ such that $\Gamma_\Omega \subsetneq A^\Omega$.  
\end{lemma}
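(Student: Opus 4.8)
The plan is to contrapose in a slightly disguised form: I want to find a finite window $\Omega$ on which $\Gamma$ fails to fill $A^\Omega$, and I will locate it by exploiting the fact that $\Gamma$ is \emph{closed} (hence equal to the inverse limit of its finite projections) together with the fact that $\Gamma$ is a \emph{proper} subspace. First I would fix an exhaustion $G = \bigcup_{n \in \N} E_n$ of the countable group $G$ by an increasing sequence of finite subsets, and consider the projection maps $\pi_n \colon A^G \to A^{E_n}$ together with the images $\Gamma_{E_n} = \pi_n(\Gamma) \subseteq A^{E_n}$. These form an inverse system of finite-dimensional vector spaces with surjective (linear) transition maps $\Gamma_{E_m} \to \Gamma_{E_n}$ for $m \ge n$, and because $\Gamma$ is closed in the prodiscrete topology it is exactly the inverse limit: $\Gamma = \varprojlim_n \Gamma_{E_n}$, i.e. $c \in \Gamma$ if and only if $c\vert_{E_n} \in \Gamma_{E_n}$ for every $n$.

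Now suppose, for contradiction, that $\Gamma_\Omega = A^\Omega$ for every finite $\Omega \subseteq G$; in particular $\Gamma_{E_n} = A^{E_n}$ for all $n$. Then given any $c \in A^G$, we have $c\vert_{E_n} \in A^{E_n} = \Gamma_{E_n}$ for every $n$, so by the inverse-limit description of $\Gamma$ we conclude $c \in \Gamma$. Hence $\Gamma = A^G$, contradicting $\Gamma \subsetneq A^G$. Therefore there is some finite $\Omega \subseteq G$ (one may take $\Omega = E_n$ for a suitable $n$) with $\Gamma_\Omega \subsetneq A^\Omega$, which is what we wanted. Note that finite-dimensionality of $V$ is used to guarantee the prodiscrete topology on $A^G = V^G$ is the product of discrete topologies, so that ``closed'' really does mean ``equal to the inverse limit of finite projections''; it is what makes the tautological inclusion $\varprojlim_n \Gamma_{E_n} \subseteq \Gamma$ an equality.

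I expect the only genuine point requiring care — the ``main obstacle'', though it is mild — is justifying the identification $\Gamma = \varprojlim_n \Gamma_{E_n}$ for a closed subspace, i.e. that a configuration all of whose finite restrictions lie in the corresponding projections of $\Gamma$ must itself lie in $\Gamma$. This is where closedness enters: if $c\vert_{E_n} \in \Gamma_{E_n}$ for every $n$, then for each $n$ there is $\gamma_n \in \Gamma$ agreeing with $c$ on $E_n$, so $\gamma_n \to c$ in the prodiscrete topology, whence $c \in \overline{\Gamma} = \Gamma$. (Alternatively one can invoke the same inverse-limit bookkeeping already used in the proof of Theorem~\ref{t:closed-image-linear}.) Everything else is the elementary observation that a proper subspace cannot have all finite projections surjective, which follows immediately. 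The statement does not even require the linearity of $\Gamma$ beyond what makes the projections linear maps between finite-dimensional spaces; the argument works verbatim for any closed proper subset, but linearity is the natural setting and costs nothing.
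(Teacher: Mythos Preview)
Your argument is correct and is the standard one: if every finite projection of $\Gamma$ were full, then $\Gamma$ would be dense in $A^G$, and being closed it would equal $A^G$, contradicting properness. The paper does not give its own proof but defers to \cite[Lemma~8.2]{phung-tcs}, noting that the argument there works for an arbitrary alphabet; your proof is presumably the same as (or equivalent to) what is found there.

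One small correction: your remark that finite-dimensionality of $V$ is needed ``to guarantee the prodiscrete topology on $A^G = V^G$ is the product of discrete topologies'' is not quite right --- the prodiscrete topology is \emph{by definition} the product of discrete topologies, regardless of the alphabet. Your core argument (pick $\gamma_n \in \Gamma$ with $\gamma_n\vert_{E_n} = c\vert_{E_n}$, so $\gamma_n \to c$, hence $c \in \overline{\Gamma} = \Gamma$) uses neither linearity nor finite-dimensionality, which is consistent with the paper's own parenthetical remark that the proof ``is in fact valid for an arbitrary alphabet.''
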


\begin{proof}
See the proof of \cite[Lemma~8.2]{phung-tcs} which is in fact valid for an arbitrary alphabet.  
\end{proof}
\par 
We shall now state the following key lemma of the duality of linear NUCA. The  proof is a direct generalization of \cite[Theorem~1.4]{bartholdi-duality-2017} and we include it here for the sake of completeness.  

\begin{lemma}
\label{l:dual-property-1}
Let $M$ be a finite  subset of a countable group $G$. Let  $V$ be a finite-dimensional vector space over a field and let $S = \LL(V^M, V)$. Then for every $s \in S^G$, we have: 
\begin{enumerate} [\rm (i)]
    \item $\Ker (\sigma_s\vert VG)^\perp  = \im \left(\sigma_{s^*}\vert (V^*)^G\right)$, 
    \item $\Ker \left(\sigma_s\vert V^G\right)^\perp = \im (\sigma_{s^*} \vert V^*G)$,
    \item $\im (\sigma_s\vert VG)^\perp = \Ker \left(\sigma_{s^*}\vert (V^*)^G\right)$, 
    \item $\im \left(\sigma_s\vert V^G\right)^\perp = \Ker (\sigma_{s^*}\vert V^*G)$. 
\end{enumerate}
\end{lemma}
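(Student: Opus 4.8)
The plan is to deduce all four statements from the pairing identity $\langle \sigma_s^*(\omega)\vert c\rangle = \langle\omega\vert\sigma_s(c)\rangle$ of Lemma~\ref{l:duality-pairing}, together with the involutivity $s^{**}=s$ of Lemma~\ref{l:dual-dual} and the closed image property Theorem~\ref{t:closed-image-linear}. The key preliminary point is to set up the orthogonality bookkeeping carefully: for a subset $X\subset V^G$ we let $X^\perp = \{\omega\in V^*G : \langle\omega\vert x\rangle=0 \text{ for all } x\in X\}$, and for a subset $Y\subset V^*G$ we let $Y^\perp=\{c\in V^G:\langle\omega\vert c\rangle=0\text{ for all }\omega\in Y\}$; similarly with the roles of $V^G$ and $V^{*G}$ (the full product) and $VG=\bigoplus_G V$ (the direct sum) in the other two clauses. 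Because the pairing is pointwise and $V$ is finite dimensional, it is nondegenerate in the strong sense that $(V^*G, V^G)$ and $(VG, V^{*G})$ are dual pairs of topological vector spaces where the annihilator of a \emph{closed} subspace on one side is recovered by taking $\perp$ twice — this is exactly where Lemma~\ref{l:closedness-finite-total-linear} and the closed image property enter, since $\im\sigma_s$ and $\Ker\sigma_s$ are closed subspaces.

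The core computation is the same for all four parts. Fix $s\in S^G$. First I would establish the ``easy'' inclusions: if $\omega\in\im(\sigma_{s^*})$, say $\omega=\sigma_{s^*}(\eta)$, then for any $c\in\Ker\sigma_s$ we get $\langle\omega\vert c\rangle=\langle\sigma_{s^*}(\eta)\vert c\rangle=\langle\eta\vert\sigma_s(c)\rangle=0$, so $\im\sigma_{s^*}\subseteq(\Ker\sigma_s)^\perp$; and dually $\langle\eta\vert\sigma_s(c)\rangle$ vanishing for all $c$ whenever $\sigma_{s^*}(\eta)=0$ gives $\Ker\sigma_{s^*}\subseteq(\im\sigma_s)^\perp$. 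For the reverse inclusions, $(\im\sigma_s)^\perp\subseteq\Ker\sigma_{s^*}$ follows because if $\langle\eta\vert\sigma_s(c)\rangle=0$ for all $c$ then $\langle\sigma_{s^*}(\eta)\vert c\rangle=0$ for all $c\in V^G$ (resp.\ $c\in VG$), and nondegeneracy of the pairing forces $\sigma_{s^*}(\eta)=0$; this immediately gives (iii) and (iv). For (i) and (ii), the reverse inclusion $(\Ker\sigma_s)^\perp\subseteq\im\sigma_{s^*}$ is the substantive direction: one takes $\omega$ orthogonal to $\Ker\sigma_s$, observes that $\omega$ then factors as a linear functional on $V^G/\Ker\sigma_s\cong\im\sigma_s$, and wants to extend/lift this through $\sigma_s$ to produce a preimage under $\sigma_{s^*}$. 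Here I would invoke the double-annihilator theorem for the dual pair: applying $\perp$ to the already-proven identity (iii)/(iv) for $s$ — namely $(\im\sigma_s)^\perp=\Ker\sigma_{s^*}$ — and using that $\im\sigma_s$ is closed (Theorem~\ref{t:closed-image-linear}) together with $s^{**}=s$, one gets $\im\sigma_{s^*}$ is closed and $(\Ker\sigma_{s^*})^\perp=\im\sigma_s$, hence taking $\perp$ once more and using involutivity yields $(\Ker\sigma_s)^\perp=\im\sigma_{s^*}$. In other words, (i) and (ii) are obtained from (iii) and (iv) applied to $s^*$ in place of $s$, after checking that $\im\sigma_{s^*}$ is closed.

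Concretely, the order of steps I would carry out is: (1) record the annihilator conventions and the nondegeneracy of the pairing on each of the two dual pairs $(V^*G,V^G)$ and $(VG,V^{*G})$; (2) prove (iii) and (iv) directly from Lemma~\ref{l:duality-pairing} plus nondegeneracy, as sketched above; (3) note that $\im\sigma_{s^*}$ is closed by Theorem~\ref{t:closed-image-linear} applied to $s^*$, and likewise $\Ker\sigma_s$, $\Ker\sigma_{s^*}$ are closed (kernels of continuous maps); (4) invoke the bipolar/double-annihilator principle for dual pairs of topological vector spaces — for a closed linear subspace $W$ one has $(W^\perp)^\perp=W$ — which requires knowing that the pairing separates points on both sides and that closed subspaces are intersections of kernels of the pairing functionals, the latter supplied by Lemma~\ref{l:closedness-finite-total-linear}; (5) apply (iii) (resp.\ (iv)) to $s^*$ to get $\im\sigma_s=(\Ker\sigma_{s^*})^\perp$ via $s^{**}=s$, then take $\perp$ of both sides and use step (4) on the closed subspace $\Ker\sigma_s$ to conclude (i) (resp.\ (ii)).

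The main obstacle I anticipate is step (4): making the double-annihilator equality $(W^\perp)^\perp=W$ rigorous for these particular dual pairs, where one side ($V^G$ or $V^{*G}$) carries the prodiscrete topology and the other ($V^*G$ or $VG$) is the ``finitely supported'' side with no topology. One cannot simply quote a Hahn–Banach statement; instead the argument has to be the hands-on one: if $c\in V^G$ is \emph{not} in the closed subspace $W$, then by Lemma~\ref{l:closedness-finite-total-linear} (applied after a suitable reduction, or in the form already used in \cite[Lemma~8.2]{phung-tcs}) there is a finite window $\Omega$ on which $c\vert_\Omega$ escapes the finite-dimensional subspace $W_\Omega\subset V^\Omega$, whence a finitely supported functional $\omega$ supported on $\Omega$ that kills $W$ but not $c$ — so $c\notin(W^\perp)^\perp$. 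This is exactly the kind of separation argument underlying Lemma~\ref{l:closedness-finite-total-linear}, so the obstacle is really one of packaging rather than of new ideas, but it is the step that must be written with care, and the asymmetry between the two dual pairs (one needs it both for ``closed subspace of the product separated by finitely supported functionals'' and, in the other two clauses, for ``closed subspace of the product of duals separated by finitely supported vectors'') means the verification has to be done on both sides.
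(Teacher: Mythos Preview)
Your proposal is correct and uses the same ingredients as the paper's proof: the adjunction identity of Lemma~\ref{l:duality-pairing}, nondegeneracy of the pairing, the closed image property (Theorem~\ref{t:closed-image-linear}), and a finite-window separation argument. The packaging differs slightly: the paper proves all four parts by direct arguments --- for (iii) and (iv) exactly as you do, and for the hard inclusions of (i) and (ii) by carrying out the separation argument \emph{in-line} (for (i), take $c\notin\im\sigma_{s^*}$, use closedness to find a finite window $E$ with $c\vert_E\notin(\im\sigma_{s^*})_E$, produce a finitely supported $\omega$ killing the image but not $c$, and check $\omega\in\Ker(\sigma_s\vert VG)$; for (ii), use that the full algebraic dual of $V^*G$ is $V^G$ to find the separating $z$ directly) --- whereas you abstract this into a standalone bipolar statement $(W^\perp)^\perp=W$ and then deduce (i),(ii) from (iii),(iv) applied to $s^*$ via $s^{**}=s$.

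Two small remarks. First, your step (5) as written has the roles of $s$ and $s^*$ slightly tangled: what you actually need is to apply (iii),(iv) with $s$ replaced by $s^*$, obtaining $(\im\sigma_{s^*})^\perp=\Ker\sigma_s$, and then take $\perp$ and invoke bipolar on $\im\sigma_{s^*}$ (not on $\Ker\sigma_s$) to get $(\Ker\sigma_s)^\perp=\im\sigma_{s^*}$. Second, note that the bipolar verification splits cleanly: on the direct-sum side ($Y\subset V^*G$ or $Y\subset VG$) the identity $(Y^\perp)^\perp=Y$ holds for \emph{every} linear subspace by pure linear algebra, since the algebraic dual of $\bigoplus_G V$ is exactly $\prod_G V^*$; it is only on the full-product side that closedness and the finite-window argument (your Lemma~\ref{l:closedness-finite-total-linear}) are genuinely needed.
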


\par 

\begin{proof}[Proof of Lemma~\ref{l:dual-property-1}] 
Up to enlarging $M$, we can suppose without loss of generality  that $1_G \in M$ and that $M$ is symmetric, i.e. $M=M^{-1}$. In particular, both $\sigma_s$ and $\sigma_{s^*}$ admit $M$ as a memory set. 
\par 
For (i), let  $y= \sigma_{s^*}(x)$ for some $x \in (V^*)^G$. Let $z \in \Ker (\sigma_s\vert VG)$ so that $z \in VG$ and $\sigma_s(z)=0^G$. We deduce from Lemma~\ref{l:dual-dual} and  Lemma~\ref{l:duality-pairing} that: 
\begin{align*}
    \langle z \vert y \rangle 
    = \langle z \vert \sigma_{s^*}(x) \rangle 
     = \langle \sigma_s(z)  \vert x \rangle =  \langle 0^G \vert x \rangle =0.
\end{align*}
\par 
Therefore, $\im \left(\sigma_{s^*}\vert (V^*)^G\right) \perp \Ker (\sigma_s\vert VG)$ and it follows that 
\[ 
\im \left(\sigma_{s^*}\vert (V^*)^G\right) \subset \Ker (\sigma_s\vert VG)^\perp.
\]
\par 
It is straightforward that the same argument gives the inclusion $"\supset"$ in the relations (ii), (iii), and (iv). 
\par 
For the other inclusion of assertion (i), let us  suppose on the contrary that 
$\Ker (\sigma_s\vert VG)^\perp \not \subset \Gamma$ where 
$\Gamma = \im \left(\sigma_{s^*}\vert (V^*)^G\right)$. In particular, $\Gamma \subsetneq (V^*)^G$ and   there exists $c \in \Ker (\sigma_s\vert VG)^\perp \setminus \Gamma$. 
Since $\Gamma$ is closed in $(V^*)^G$ by Theorem~\ref{t:closed-image-linear}, we deduce from Lemma~\ref{l:closedness-finite-total-linear} the existence of a finite subset $E \subset G$  such that $\Gamma_E\subsetneq (V^*)^E$ is a proper vector subspace and $c \vert_E \notin \Gamma_E$. Since  $(V^*)^E$ is finite dimensional, we can easily find $\omega \in \left((V^*)^E\right)^*= V^E$ which vanishes (under the pairing $\langle \cdot \vert \cdot \rangle$) on $\Gamma_{E}$ but not on $c\vert_E$. 
\par 
We regard $\omega$ as an element of $VG$ by simply extending $\omega$ by zeros outside of $E$. 
Since the natural pairing $\langle \cdot \vert \cdot \rangle$ is pointwise, it follows that $\omega \perp \Gamma$. 
Consequently, we infer from Lemma~\ref{l:duality-pairing} that $\sigma_s(\omega) \perp (V^*)^G$ since for every $x \in (V^*)^G$, we have: 
\begin{equation*}
\langle \sigma_s(\omega) \vert x \rangle = \langle \omega \vert \sigma_{s^*}(x) \rangle = 0. 
\end{equation*}
\par 
Therefore, $\sigma_s(\omega)=0^G$ as the pairing $\langle \cdot \vert \cdot \rangle$ is nondegenerate. We deduce that $ \omega \in \Ker (\sigma_s\vert VG)$. However,   since $\langle \omega \vert c\rangle \neq 0$, we obtain a contradiction to the assumption $c \perp \Ker (\sigma_s\vert VG)$. We conclude that $\Ker(\sigma_s\vert VG)^\perp  \subset \Gamma$ and therefore $\Ker(\sigma_s\vert VG)^\perp  = \Gamma$. Hence, point (i) is proved. 
\par 
For the inclusion "$\subset$" in the relations (iii) and (iv), let $c\in (V^*)^G$ such that $c \perp \im(\sigma_s\vert VG)$. Then a direct application of Lemma~\ref{l:dual-dual} and  Lemma~\ref{l:duality-pairing} shows  that $\sigma_{s^*}(c) \perp VG$. As the pairing is nondegenerate, we obtain $\sigma_{s^*}(c)=0^G$ and thus $c \in \Ker \left(\sigma_{s^*}\vert (V^*)^G\right)$. 
\par 
Therefore, 
$\im (\sigma_s\vert VG)^\perp \subset \Ker \left(\sigma_{s^*}\vert (V^*)^G\right)$. Similarly, we obtain by the same argument that 
$\im \left(\sigma_s\vert V^G\right)^\perp \subset \Ker (\sigma_{s^*}\vert V^*G)$.  Thus, assertions (iii) and (iv) are proved. 
\par 
For the remaining inclusion $\Ker \left(\sigma_s\vert V^G\right)^\perp \subset \im (\sigma_{s^*} \vert V^*G)$,  let $\omega \in V^*G$ be such that  $\omega \notin   \im (\sigma_{s^*} \vert V^*G)$. Note that $\omega$ has finite support. Since $ \im (\sigma_{s^*} \vert V^*G)$ is a vector subspace of $   V^*G$ which does not contain  $\omega$, we can clearly choose a linear form $z \in (V^*G)^*=V^G$  which vanishes on $ \im (\sigma_{s^*} \vert V^*G)$ but does not vanish on $\omega$ (with respect to the pairing $\langle \cdot \vert \cdot \rangle$).  
\par 
Hence, Lemma~\ref{l:dual-dual} and  Lemma~\ref{l:duality-pairing} imply that $\sigma_s(z) \perp V^* G$. Consequently, $\sigma_s (z) = 0^G$ because the pairing $\langle \cdot \vert \cdot \rangle$ is nondegenerate. It follows that $z \in\Ker \left(\sigma_s\vert V^G\right) $ and thus by the choice of $z$, we have $\omega \notin \Ker \left(\sigma_s\vert V^G\right)^\perp$. 
\par 
We conclude that  $\Ker \left(\sigma_s\vert V^G\right)^\perp \subset \im (\sigma_{s^*} \vert V^*G)$ and 
the proof of the lemma is  complete.
\end{proof}

\section{Weak uniform post-surjectivity}
\label{s:weak-uniform-post-surj}

We have the following weak uniform post-surjectivity result for the class of linear NUCA with finite memory whose alphabet is a finite dimensional vector space over a field (not necessarily finite). Our proof follows closely the strategy of the  proof of \cite[Lemma~5.3]{phung-post-surjective}. 

\begin{lemma}[Weak uniform post-surjectivity] 
\label{l:uniform-post-surjectivity-linear}
Let $M$ be a finite subset of a countable group $G$. Let $V$ be a finite dimensional vector space over a field. Let $S= \LL(V^M, V)$ and $s \in S^G$. Suppose that $\sigma_s$ is post-surjective. Then there exists a finite subset $E\subset G$ with the following property. For all $x, y\in V^G$ such that $y\vert_{G \setminus \{1_G\}} =\sigma_s(x)\vert_{G \setminus \{1_G\}}$, there exists $x' \in V^G$ such that $\sigma_s(x')=y$ and 
$x'\vert_{G \setminus E}= x\vert_{G \setminus E}$. 
\end{lemma}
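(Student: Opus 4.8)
The plan is to reduce the statement, using the linearity of $\sigma_s$, to finding preimages of uniformly bounded support for configurations supported at the single cell $1_G$, and then to exploit that $\dim V < \infty$ in order to reduce this to finitely many applications of the post-surjectivity hypothesis.

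First I would note that if $x, y \in V^G$ satisfy $y\vert_{G \setminus \{1_G\}} = \sigma_s(x)\vert_{G \setminus \{1_G\}}$, then $w := y - \sigma_s(x) \in V^G$ has $\supp(w) \subset \{1_G\}$, and writing $x' = x + z$ turns the two requirements $\sigma_s(x') = y$ and $x'\vert_{G \setminus E} = x\vert_{G \setminus E}$ into $\sigma_s(z) = w$ and $\supp(z) \subset E$. Hence it suffices to exhibit one finite subset $E \subset G$ such that every $w \in V^G$ with $\supp(w) \subset \{1_G\}$ admits a preimage under $\sigma_s$ with support contained in $E$.

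Next I would fix a basis $v_1, \dots, v_d$ of $V$ and, for each $i$, let $w_i \in V^G$ be the configuration with $w_i(1_G) = v_i$ and $w_i(g) = 0$ for all $g \neq 1_G$. Since $\sigma_s$ is linear we have $\sigma_s(0^G) = 0^G$, so each $w_i$ is asymptotic to $\sigma_s(0^G)$; applying post-surjectivity to the pair $(0^G, w_i)$ produces $z_i \in V^G$ asymptotic to $0^G$, hence of finite support $\supp(z_i) \subset E_i$ for some finite $E_i \subset G$, with $\sigma_s(z_i) = w_i$. Set $E := E_1 \cup \dots \cup E_d$, a finite subset of $G$.

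Finally, given any $w$ with $\supp(w) \subset \{1_G\}$, write $w(1_G) = \sum_{i=1}^d a_i v_i$ and put $z := \sum_{i=1}^d a_i z_i$; then $\supp(z) \subset E$ and, by linearity, $\sigma_s(z) = \sum_{i=1}^d a_i w_i = w$, which via the reduction above gives $x' = x + z$ with the desired properties. There is no serious obstacle here; the one step that carries the content — the analogue of the ``finitely many local completions'' step in the proof of \cite[Lemma~5.3]{phung-post-surjective} — is the passage from the a priori infinite family of configurations supported at $1_G$ to the finitely many basis configurations $w_1, \dots, w_d$. It is precisely finite-dimensionality of $V$ that makes this possible, after which linearity finishes the argument with no appeal to compactness of $V^G$ (which fails over an infinite field).
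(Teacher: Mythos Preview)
Your proof is correct and follows the same overall strategy as the paper: reduce by linearity to finding finitely-supported preimages of configurations supported at $1_G$, then use $\dim V < \infty$ to obtain a uniform bound on the support. The only difference is organizational: the paper builds an exhaustion $(E_n)$, sets $Z_n = \sigma_s(V_n)\vert_{\{1_G\}}$ for $V_n = \{x : \supp(x) \subset E_n,\ \sigma_s(x)\vert_{G\setminus\{1_G\}} = 0\}$, and uses finite-dimensionality to make the ascending chain $Z_0 \subset Z_1 \subset \cdots$ stabilize at $V$, whereas you fix a basis of $V$ up front and apply post-surjectivity once per basis vector. Your route is slightly more direct and avoids the auxiliary maps $\varphi_n, p_n, q_n$; the paper's chain argument is a touch more robust in that it never singles out a basis, but in finite dimension the two are interchangeable.
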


\begin{proof} 
Up to enlarging $M$, we can suppose that $1_G \in M$ and $M=M^{-1}$. 
Let $(E_n)_{n \in \N}$ be an exhaustion of $G$ consisting of finite subsets such that $1_G \in E_0$. For every $n \in \N$, let us define a linear subspace $V_n$ of $V^G$: 
\begin{equation}
\label{e:uniform-post-surjectivity-eq-1}
V_n = 
\{ x\in V^G \colon \sigma_s(x) \vert_{G \setminus \{1_G\}} = 0^{G \setminus \{1_G\}}, x\vert_{G \setminus E_n} = 0^{G \setminus E_n} \}. 
\end{equation}
\par 
Consider the following linear map
\begin{align*}
    \varphi_n \colon V^{E_n}\times \{0\}^{E_n M^2 \setminus E_n} \to V^{E_nM}
\end{align*}
given by the formula $\varphi_n(x)(g) = s(g)((g^{-1}x)\vert_M)$ for all $x \in V^{E_n}\times \{0\}^{E_n M^2 \setminus E_n}$ and $g \in E_nM$. 
Note that $1_G \in E_n M$ for every $n\in \N$. We denote respectively by $p_n \colon  V^{E_nM} \to V^{\{1_G\}}$ and $q_n \colon V^{E_nM} \to V^{E_nM\setminus \{1_G\}}$ the canonical projections. 
Since $x\vert_{G\setminus E_n}=\{0\}^{G\setminus E_n}$ for all $n \in \N$, we can  identify 
$V_n=\Ker q_n \circ \varphi_n$ 
 as a linear subspace of $V^{E_n}\times \{0\}^{E_n M^2 \setminus E_n}=V^{E_n}$. 
For every $n \in \N$, consider the following linear subspace of $V$: 
\begin{equation*}
    Z_n = p_n (\varphi_n (V_n))= \sigma_s(V_n)_{\{1_G\}}. 
\end{equation*} 
\par 
As $E_n \subset E_{n+1}$,  \eqref{e:uniform-post-surjectivity-eq-1} implies that $V_n \subset V_{n+1}$. Consequently,  
$Z_n \subset Z_{n+1}$ for all $n \in \N$. 
We claim that $\cup_{n \in \N}Z_n=V$. 
Indeed, let $y \in V$ and consider $c \in V^G$ where $c(g)=0$ for all $g \in G\setminus \{1_G\}$ and $c(1_G)=y$.  Since $\sigma_s$ is post-surjective and since $\sigma_s(0^G)=0^G$,  there exist $x \in V^G$ and $n \in \N$ such that $x\vert_{G \setminus E_n}=0^{G \setminus E_n}$ and $\sigma_s(x)=c$. We deduce that $\sigma_s(x)\vert_{G\setminus \{1_G\}}=0^{G\setminus \{1_G\}}$ and thus $x \in V_n$. Moreover, as $\sigma_s(x)(1_G)=y$, it follows that $y \in Z_n$. Hence, the claim 
 $\cup_{n \in \N}Z_n=V$ is proved. 
\par 
Since $V$ is a finite dimensional vector space, we deduce that $Z_N=V$ for some $N \in \N$.  We shall see in the sequel that $E = E_N$ satisfies the desired property in the conclusion of the lemma. 
Indeed, let $x , y \in V^G$ such that $y\vert_{G \setminus \{1_G\}} =\sigma_s(x)\vert_{G \setminus \{1_G\}}$. Let  $c=y-\sigma_s(x)  \in V^G$ thus $c\vert_{G\setminus \{1_G\}}=0^{G\setminus \{1_G\}}$. As $Z_N=\sigma_s(V_N)_{\{1_G\}}= V$, there exists a configuration 
$d \in V_N$ such that 
\[
\sigma_s(d)(1_G)=c(1_G)=y(1_G)-\sigma_s(x)(1_G).
\]
\par 
In particular, $d\vert_{G\setminus E_N}=0^{G\setminus E_N}$ and $\sigma_s(d)\vert_{G\setminus \{1_G\}}= 0^{G\setminus \{1_G\}}$.  
Let us define $x'= d+x \in V^G$. Then for $g \in G$, the linearity of $\sigma_s$ implies that:  
\begin{align*} 
\sigma_s(x')(g) & =  \sigma_s (d)(g) + \sigma_s(x)(g) \\
&= 
  \begin{cases}
		\sigma_s(x)(g)   & \mbox{if } g \in G\setminus \{1_G\} \\
		y(1_G) - \sigma_s(x)(1_G) + \sigma_s(x)(1_G)  & \mbox{if } g=1_G
	\end{cases}
\\ 
&= 
  \begin{cases}
		 y(g)  & \mbox{if } g \in G\setminus \{1_G\} \\
		y(1_G) & \mbox{if } g=1_G
	\end{cases}
\\ 
& = y(g). 
\end{align*}
\par 
It follows that $\sigma_s(x')=y$. Moreover,  as   $d\vert_{G\setminus E_N}=0^{G\setminus E_N}$ and $x'=d+x$, we have  $x'\vert_{G \setminus E_N}= x\vert_{G \setminus E_N}$. The proof is complete. 
\end{proof}

\section{Stably post-surjective NUCA} 
\label{s:stably-post0surjective}

As for stable injectivity, we define explicitly the stable post-surjectivity property for linear NUCA as follows.   

\begin{definition}
Let $M$ be a subset of a group $G$ and let $V$ be a vector space. 
Let $s \in S^G$ where $S = \LL(V^M,V)$.  The linear NUCA $\sigma_s$ is said to be  \emph{stably post-surjective} if for every $p \in \Sigma(s)$, the NUCA $\sigma_p$ is post-surjective.
\end{definition}
\par 
With the above notation,  suppose that $\sigma_s$ is stably post-surjective for some $s \in S^G$. Then $\sigma_p$ is also stably post-surjective for every $p \in \Sigma(s)$ since 
$\Sigma(p) \subset \Sigma(s)$ (see the proof of \cite[Lemma~5.1]{phung-tcs}).  
\par 
We have the following  fundamental uniform post-surjectivity property of stably post-surjective linear NUCA over finite alphabets. 

\begin{lemma}
[Uniform post-surjectivity] 
\label{l:uniform-post-surjectivity-all} 
Let $M$  be a finite subset of a  countable group $G$. Let $V$ be a finite dimensional vector space over a finite field and let $S = \LL(V^M, V)$. Suppose that $\sigma_s$ is stably post-surjective for some $s \in S^G$. Then there exists a finite subset $E\subset G$ such that for all $g \in G$ and $x, y\in V^G$ with $y\vert_{G \setminus \{g\}} =\sigma_s(x)\vert_{G \setminus \{g\}}$, there exists $z \in A^G$ such that $\sigma_s(z)=y$ and 
$z\vert_{G \setminus gE}= x\vert_{G \setminus gE}$.
\end{lemma}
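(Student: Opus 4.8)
The plan is to deduce this uniform statement from the single–basepoint version already established in Lemma~\ref{l:uniform-post-surjectivity-linear}, applied not just to $\sigma_s$ but to all the NUCA $\sigma_p$ with $p\in\Sigma(s)$, and then to package the finite sets produced into one common $E$ by a compactness argument. The first ingredient is the shift–conjugacy identity $\sigma_{g^{-1}s}=g^{-1}\circ\sigma_s\circ g$, which follows at once from Definition~\ref{d:most-general-def-asyn-ca} together with the definition of the Bernoulli shift action; combined with $g^{-1}s\in\Sigma(s)$, it reduces a post-surjectivity problem for $\sigma_s$ localized at $g$ to the same problem for $\sigma_{g^{-1}s}$ localized at $1_G$. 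So it suffices to produce one finite set $E$ that serves as the ``correction window'' at $1_G$ for \emph{every} $\sigma_p$, $p\in\Sigma(s)$; since $\sigma_s$ is stably post-surjective, each such $\sigma_p$ is post-surjective and Lemma~\ref{l:uniform-post-surjectivity-linear} does apply to it.

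Next I would isolate the relevant finitary invariant from the proof of Lemma~\ref{l:uniform-post-surjectivity-linear}. Fix, once and for all, an exhaustion $(E_n)_{n\in\N}$ of $G$ by finite subsets with $1_G\in E_0$, and (after enlarging $M$ so that $1_G\in M=M^{-1}$) attach to each $p\in S^G$ and $n\in\N$ the linear subspace $Z_n(p)=\sigma_p(V_n(p))_{\{1_G\}}\subset V$, where $V_n(p)=\{d\in V^G:\sigma_p(d)\vert_{G\setminus\{1_G\}}=0^{G\setminus\{1_G\}},\ d\vert_{G\setminus E_n}=0^{G\setminus E_n}\}$, exactly as in that proof. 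Two facts carry over verbatim: $Z_n(p)$ is nondecreasing in $n$, and when $\sigma_p$ is post-surjective one has $\bigcup_n Z_n(p)=V$, hence $Z_n(p)=V$ for all large $n$ because $V$ is finite-dimensional. The extra point I need is that $Z_n(p)$ depends only on $p\vert_{E_nM}$: the configurations $d$ in $V_n(p)$ are supported in $E_n$, so $\sigma_p(d)$ is determined on $E_nM$ by $p\vert_{E_nM}$ and vanishes automatically off $E_nM$.

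With these in place the compactness step is routine. For each $p\in\Sigma(s)$ pick $n(p)$ with $Z_{n(p)}(p)=V$; then the cylinder $U_p=\{q\in\Sigma(s):q\vert_{E_{n(p)}M}=p\vert_{E_{n(p)}M}\}$ is an open neighbourhood of $p$ in $\Sigma(s)$ on which $Z_{n(p)}(\cdot)=V$, hence $Z_n(\cdot)=V$ for all $n\ge n(p)$ by monotonicity. Since $V$, and therefore $S=\LL(V^M,V)$, is finite, $S^G$ is compact and $\Sigma(s)$ is a closed, hence compact, subset; taking a finite subcover $U_{p_1},\dots,U_{p_k}$ and $N=\max_i n(p_i)$ yields $Z_N(p)=V$ for every $p\in\Sigma(s)$. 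I then set $E=E_N$. Given $g\in G$ and $x,y\in V^G$ with $y\vert_{G\setminus\{g\}}=\sigma_s(x)\vert_{G\setminus\{g\}}$, put $p=g^{-1}s\in\Sigma(s)$, $\tilde x=g^{-1}x$, $\tilde y=g^{-1}y$; the conjugacy identity gives $\sigma_p(\tilde x)=g^{-1}\sigma_s(x)$, so $\tilde y\vert_{G\setminus\{1_G\}}=\sigma_p(\tilde x)\vert_{G\setminus\{1_G\}}$. Running the last paragraph of the proof of Lemma~\ref{l:uniform-post-surjectivity-linear} for $\sigma_p$ with $Z_N(p)=V$ produces $\tilde z\in V^G$ with $\sigma_p(\tilde z)=\tilde y$ and $\tilde z\vert_{G\setminus E_N}=\tilde x\vert_{G\setminus E_N}$; then $z=g\tilde z$ satisfies $\sigma_s(z)=g\,\sigma_p(\tilde z)=g\tilde y=y$ and $z\vert_{G\setminus gE_N}=x\vert_{G\setminus gE_N}$, which is what is wanted.

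The step I expect to be most delicate is the second one: making precise that the subspace chain $Z_n(p)$ only reads a finite window of $p$, so that the stabilization index $n(p)$ is locally constant on $\Sigma(s)$, and keeping straight that it is \emph{stable} post-surjectivity — post-surjectivity of every $\sigma_p$ with $p\in\Sigma(s)$, not merely of $\sigma_s$ — that guarantees $\bigcup_n Z_n(p)=V$ at each $p$; if one only knew $\sigma_s$ post-surjective the compactness packaging would have nothing to latch onto. The remaining two steps (the shift–conjugacy bookkeeping and the finite subcover) are then straightforward, provided $M$ has been symmetrized and made to contain $1_G$ at the outset.
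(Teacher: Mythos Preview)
Your argument is correct. The shift--conjugacy identity, the observation that $Z_n(p)$ depends only on $p\vert_{E_nM}$, and the compactness of $\Sigma(s)\subset S^G$ (which holds because $V$, being finite-dimensional over a \emph{finite} field, is finite, hence so is $S$) combine exactly as you describe to give a uniform $N$; the final translation back via $z=g\tilde z$ is clean.

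The paper, however, does not prove this lemma at all: it simply records it as a special case of \cite[Theorem~9.1]{phung-tcs}. Your route is therefore genuinely different in that it is self-contained within the present paper, leveraging Lemma~\ref{l:uniform-post-surjectivity-linear} (which the paper proves from scratch) together with a standard compactness packaging over $\Sigma(s)$. What this buys is independence from the external reference and a transparent explanation of \emph{why} finiteness of the alphabet is needed here whereas Lemma~\ref{l:uniform-post-surjectivity-linear} only required finite dimension: finiteness of $V$ is precisely what makes $S^G$ compact and allows the local stabilization indices $n(p)$ to be bounded uniformly. The paper's approach, by contrast, is shorter on the page but defers the actual work to \cite{phung-tcs}.
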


\begin{proof}
 The lemma is a special case of \cite[Theorem~9.1]{phung-tcs}.  
\end{proof}

\par 
The result \cite[Theorem~13.3]{phung-tcs} specialized to the case of linear NUCA over finite alphabets says that  uniform post-surjectivity is a stable property when passing to the limit of the configurations of local defining maps.

\begin{theorem} 
\label{t:stably-uniform-post-surj-aysn-main}
Let $M$  be a finite subset of a countable group $G$. Let $A$ be a finite vector space and let $s \in S^G$ where  $S=A^{A^M}$. Suppose that $\sigma_s$ is stably  post-surjective. Then there exists $E \subset G$ finite such that for all $p \in \Sigma(s)$, $g \in G$, and $x, y\in A^G$ with  $y\vert_{G \setminus \{g\}} =\sigma_p(x)\vert_{G \setminus \{g\}}$, there exists $z \in A^G$ such that $\sigma_p(z)=y$ and 
$z\vert_{G \setminus gE}= x\vert_{G \setminus gE}$. \qed 
\end{theorem}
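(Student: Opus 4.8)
The statement is already flagged in the excerpt as ``a special case of \cite[Theorem~13.3]{phung-tcs}'', so the plan is to deduce it by the standard compactness/limit argument that turns a uniform property holding for each individual $\sigma_p$ into one holding uniformly over the whole subshift $\Sigma(s)$. First I would invoke Lemma~\ref{l:uniform-post-surjectivity-all}: since $\sigma_s$ is stably post-surjective and every $p\in\Sigma(s)$ satisfies $\Sigma(p)\subset\Sigma(s)$, each $\sigma_p$ is itself stably post-surjective, hence by Lemma~\ref{l:uniform-post-surjectivity-all} (applied to $p$) there is a finite $E_p\subset G$ that works for $\sigma_p$. The issue is that a priori $E_p$ depends on $p$; the content of the theorem is that a single $E$ works for \emph{all} $p\in\Sigma(s)$ simultaneously. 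By translation-equivariance (\cite[Lemma~5.1]{phung-tcs}) it suffices to treat the case $g=1_G$ throughout, so I fix $g=1_G$ from now on.

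The core of the argument is to show that the ``failure set'' is closed and then use compactness of $\Sigma(s)$ (a closed subset of the compact space $S^G$, since $A$ is finite and $M$ is finite). Concretely, for each finite $E\subset G$ consider the property $P_E(p)$: ``for all $x,y\in A^G$ with $y\vert_{G\setminus\{1_G\}}=\sigma_p(x)\vert_{G\setminus\{1_G\}}$ there is $z\in A^G$ with $\sigma_p(z)=y$ and $z\vert_{G\setminus E}=x\vert_{G\setminus E}$.'' One checks, using the fact that $\sigma_p(x)$ restricted to any finite window depends only on $p$ restricted to a finite window (via the induced local maps $f^+_{\Omega,p\vert_\Omega}$ of Section~\ref{s:induced-local-map}) and only on $x$ restricted to a finite window, together with compactness of $A^G$, that the set $U_E=\{p\in\Sigma(s):P_E(p)\text{ holds}\}$ is open in $\Sigma(s)$: if $P_E(p)$ holds and $q$ agrees with $p$ on a large enough finite set, one can recycle the correcting configuration $z$ built for $p$ (adjusting it only on the finite set $E$ and on the finite window where the equation $\sigma_q(z)=y$ needs to be verified) to obtain one that witnesses $P_E(q)$. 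Since $E\subset E'$ implies $P_E(p)\Rightarrow P_{E'}(p)$, the family $\{U_E\}_E$ indexed by finite $E\subset G$ is an increasing (directed) open cover of $\Sigma(s)$ — increasing because each individual $p$ lies in some $U_{E_p}$ by Lemma~\ref{l:uniform-post-surjectivity-all}. By compactness of $\Sigma(s)$, finitely many $U_{E_1},\dots,U_{E_k}$ cover, and then $E\coloneqq E_1\cup\cdots\cup E_k$ satisfies $\Sigma(s)=U_E$, which is exactly the conclusion (for $g=1_G$, and hence for all $g$ by equivariance).

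The step I expect to be the main obstacle is proving that $U_E$ is open, i.e. the ``recycling'' of the witness $z$ under a small perturbation of $p$. The subtlety is that $z$ itself need not have finite difference from $x$ outside $E$ in a way that is insensitive to $p$; one must argue that the equation $\sigma_q(z)=y$ is a statement about $q$, $z$, $y$ on windows that become large but stay finite, and that modifying $p$ only far away from $E\cup ME\cup\{1_G\}$ does not disturb the finitely many local constraints that $z$ was chosen to satisfy. This is precisely the kind of bookkeeping carried out in the proof of \cite[Theorem~13.3]{phung-tcs}; since the excerpt permits citing that result, the cleanest route is simply to state that the theorem is the specialization of \cite[Theorem~13.3]{phung-tcs} to linear NUCA over a finite vector space alphabet and that Lemma~\ref{l:uniform-post-surjectivity-all} above supplies the hypothesis (uniform post-surjectivity for each $\sigma_p$, $p\in\Sigma(s)$) needed to invoke it. If a self-contained proof is wanted, one fills in the openness argument as sketched and concludes by compactness.
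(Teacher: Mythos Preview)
Your proposal is correct and matches the paper's treatment exactly: the paper gives no proof at all here (note the \texttt{\textbackslash qed} immediately after the statement) and simply records the theorem as the specialization of \cite[Theorem~13.3]{phung-tcs}, which is precisely what you identify as the ``cleanest route''. Your compactness sketch is a plausible reconstruction of how that external result is proved, but it is not present in the paper itself; one small caveat is that Lemma~\ref{l:uniform-post-surjectivity-all} as stated here is only for linear $S=\mathcal{L}(V^M,V)$, whereas the theorem is written for $S=A^{A^M}$, so in a self-contained argument you would want to invoke the general finite-alphabet version \cite[Theorem~9.1]{phung-tcs} rather than the lemma.
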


\par 
Using Lemma~\ref{l:uniform-post-surjectivity-all} instead of \cite[Lemma~1]{kari-post-surjective} and \cite[Corollary~2]{kari-post-surjective}, the same proof, \textit{mutatis mutandis}, of \cite[Theorem~1]{kari-post-surjective} shows that 
all pre-injective and stably post-surjective linear NUCA with finite memory are invertible. 

\begin{theorem}
\label{t:invertible-general-linear}
Let $G$ be a countable group and let $V$ be a finite vector space. Suppose that $\tau \colon V^G \to V^G$ is a pre-injective and stably post-surjective  linear NUCA with finite memory. Then $\tau$ is invertible.  
\end{theorem}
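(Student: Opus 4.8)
The plan is to reproduce, \emph{mutatis mutandis}, the proof of \cite[Theorem~1.1]{kari-post-surjective} that a pre-injective and post-surjective cellular automaton over a finite alphabet is reversible, with Lemma~\ref{l:uniform-post-surjectivity-all} --- and its persistence under passing to $p \in \Sigma(s)$ recorded in Theorem~\ref{t:stably-uniform-post-surj-aysn-main} --- playing the role of the uniform post-surjectivity lemma used there. Write $\tau = \sigma_s$ with $M \subset G$ finite and $s \in S^G$, $S = \LL(V^M, V)$; after enlarging $M$ we may assume $1_G \in M = M^{-1}$. Since $V$ is a finite vector space, $V^G$ is compact in the prodiscrete topology, which is exactly what lets the limiting arguments below run as in the classical case.

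First, Lemma~\ref{l:uniform-post-surjectivity-all} produces a finite symmetric set $E \subset G$ with $M \subset E$ such that for every $g \in G$ and all $x, y \in V^G$ with $y$ and $\sigma_s(x)$ agreeing off $\{g\}$ there is $z \in V^G$ with $\sigma_s(z) = y$ and $z$ agreeing with $x$ off $gE$. This is the one place where \emph{stable}, and not merely plain, post-surjectivity is used: a translate of the NUCA $\sigma_s$ is not $\sigma_s$ again but $\sigma_p$ for a suitable $p \in \Sigma(s)$, so a single finite $E$ valid at \emph{every} cell $g$ genuinely requires $\sigma_p$ post-surjective for all $p \in \Sigma(s)$. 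Iterating this one-cell correction and using linearity to superpose corrections, one obtains the analogue of \cite[Corollary~2]{kari-post-surjective}: a finite $E' \subset G$, depending only on $E$ and $M$, such that whenever $x, y \in V^G$ agree off a finite set $D$ there is $z \in V^G$ with $\sigma_s(z) = y$ agreeing with $x$ off $DE'$ --- correcting one cell of $D$ perturbs the preimage only in a translate of $E$, hence the image only in a translate of $EM$, so the cascade of newly created discrepancies stays inside $DE'$ and terminates since $D$ is finite.

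The two halves of bijectivity now follow as in \emph{loc.\ cit.} For surjectivity, given $y \in V^G$ and an exhaustion $(E_n)$ of $G$, correct the finitely many discrepancies between $0^G = \sigma_s(0^G)$ and $y$ inside $E_n$ to get $x_n$ with $\sigma_s(x_n)$ and $y$ agreeing on $E_n$; a prodiscrete limit point of $(x_n)$, which exists by compactness, maps to $y$. For injectivity, pre-injectivity enters: if $0^G \neq z \in \Ker \sigma_s$ then $z$ is not finitely supported, else it already contradicts pre-injectivity; truncating $z$ to a finite window $F$ gives $z_F$ whose image $\sigma_s(z_F)$ is supported on a finite boundary layer of $F$, and the bounded correction above yields a \emph{finitely supported} $z'$ with $\sigma_s(z') = \sigma_s(z_F)$ whose support escapes to infinity as $F$ exhausts $G$; hence $z_F - z' \neq 0$ for $F$ large while $z_F - z'$ is finitely supported and lies in $\Ker \sigma_s$, contradicting pre-injectivity. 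So $\Ker \sigma_s = 0$ and $\tau$ is bijective.

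It remains to check that $\tau^{-1}$ is a NUCA with \emph{finite} memory; by linearity this amounts to showing that $\tau^{-1}(y)(g)$ depends only on the restriction of $y$ to a fixed bounded neighborhood of $g$. Given $y_1, y_2$ agreeing on a large ball $B \ni g$, let $y_2^{(F)}$ equal $y_2$ on a finite set $F \supseteq B$ and $y_1$ off $F$; correcting the finitely many discrepancies between $y_1$ and $y_2^{(F)}$, which all lie in $F \setminus B$, yields a preimage $x_1^{(F)}$ of $y_2^{(F)}$ agreeing with $\tau^{-1}(y_1)$ off $(F \setminus B)E'$, and if $B$ was chosen to contain $g(E')^{-1}$ this set misses $g$, so $x_1^{(F)}(g) = \tau^{-1}(y_1)(g)$ for all such $F$; letting $F$ exhaust $G$ and using injectivity gives $\tau^{-1}(y_2)(g) = \tau^{-1}(y_1)(g)$, so $\tau^{-1} = \sigma_t$ for a configuration $t$ of local defining maps with finite memory $(E')^{-1}$, i.e.\ $\tau$ is invertible. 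The recurring obstacle throughout is precisely the loss of translation invariance for NUCA: essentially every step that is automatic for cellular automata must be fed uniformity by hand, and it is stable post-surjectivity --- through the uniform $E$ of Lemma~\ref{l:uniform-post-surjectivity-all} and its stability under $p \in \Sigma(s)$ --- that supplies exactly enough of it for the correction scheme, the surjectivity limit, and the bounded-memory inverse.
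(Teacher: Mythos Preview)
Your argument is correct and runs along a genuinely different track from the paper's. The paper builds the inverse explicitly: it sets $t(g)(v)$ equal to the value at $g$ of the \emph{unique} finitely supported preimage (uniqueness coming from pre-injectivity) of the pattern $v$ placed on $gE$ and zero elsewhere, checks that this value is unchanged when the pattern is embedded in any larger finitely supported configuration agreeing with it on $gE$, and from this reads off $\sigma_s\circ\sigma_t=\Id$ and then $\sigma_t\circ\sigma_s=\Id$ on finitely supported configurations, hence everywhere by finite memory. You instead decouple the problem into bijectivity (surjectivity by a compactness limit, injectivity by the boundary-truncation trick) followed by a separate bounded-memory argument for $\tau^{-1}$ via another limiting construction. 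Both routes rest on the same uniform $E$ supplied by Lemma~\ref{l:uniform-post-surjectivity-all}; the paper's is shorter and constructive, yours tracks the original CA argument of \cite{kari-post-surjective} more literally and makes the role of compactness of $V^G$ explicit. Two minor points of presentation: in your statement of the bounded-correction corollary it is $\sigma_s(x)$ and $y$, not $x$ and $y$, that must agree off $D$; and thanks to linearity no ``cascade'' is needed --- writing $y-\sigma_s(x)=\sum_{g\in D}c_g$ with each $c_g$ supported at $\{g\}$ and superposing the one-cell preimages of the $c_g$ already gives the bounded correction with $E'=E$ in a single step.
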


\begin{proof}
Since $\tau$ is a linear NUCA with finite memory, we can find a finite subset $E \subset G$ and a configuration $s \in S^G$ where $S= \LL(V^E, V)$ such that $\tau= \sigma_s$. Without loss of generality, we can suppose that $E=E^{-1}$, $1_G \in E$, and that $E$  satisfies the conclusion of Theorem~\ref{t:stably-uniform-post-surj-aysn-main}. 
\par 
We define a configuration of local defining maps $t \in S^G$ as follows. Let $g \in G$ and $v \in V^E$. Consider the configuration $x_{g,v} \in V^G$ where $x_{g,v}(h)=v(g^{-1}h)$ if $h \in gE$ and $x_{g,v}(h)=0$ if $h \in G \setminus gE$. Since $x_{g,v}$ is asymptotic to $0^G$ and $\sigma_s(0^G)=0^G$, it follows from the choice of $E$ that there exists $y_{g,v} \in V^G$ asymptotic to $0^G$ such that $\sigma_s(y_{g,v})=x_{g,v}$. Since $\sigma_s$ is pre-injective, such $y_{g,v}$ is unique and we can thus define $t(g)(v)=y_{g,v}(g)$. Note that since $\sigma_s$ is linear, it is clear that $t(g) \colon V^E \to V$ is also a linear map. 
\par
Moreover, let $g \in G$,  $v \in V^E$, and consider any $x \in V^G$ asymptotic to $0^G$ such that $x(h)=v(g^{-1}h)$ if $h \in gE$. Let $y \in V^G$ be the unique configuration asymptotic to $0^G$ such that $\sigma_s(y)=x$. We claim that $y(g)=t(g)(v)$. Indeed, as $x_{g,v}$ and $x$ are asymptotic and agree on $gE$, the set $K \subset G$ on which they disagree is finite and contained in $G \setminus gE$. Consequently, Theorem~\ref{t:stably-uniform-post-surj-aysn-main} and the choice of $E$ imply that $y_{g,v}$ and $y$ can differ at most on $KE \subset (G \setminus gE)E$. Since $E=E^{-1}$, we find that $g \notin (G \setminus gE)E$. It follows that  $y(g)=y_{g,v}(g)=t(g)(v)$ as claimed. 
\par 
Therefore, for every configuration $x \in V^G$ asymptotic to $0^G$, a direct  application of the above property  implies that $\sigma_s(\sigma_t(x))=x$. Indeed, let $y \in V^G$ be the unique configuration asymptotic to $0^G$ such that $\sigma_s(y)=x$ and 
let $F \subset G$ be a finite subset such that $x_{G \setminus F}=0^{G \setminus F}$. Then for every $g \in FE$ and $v \in V^E$ such that $x_{g,v}\vert_{gE}=x\vert_{gE}$, we deduce from the above paragraph that
\[
\sigma_t(x)(g) = t(g)(v) = y(g).
\]
\par 
For $g \in G \setminus FE$, it is clear from the choice of $F$ and $E$ that $\sigma_t(x)(g)=0=y(g)$. Hence, $\sigma_t(x)=y$ and we conclude that $\sigma_s(\sigma_t(x))=\sigma_s(y)=x$. Since $\sigma_s$ and $\sigma_t$ have finite memory and since $x \in V^G$ can be arbitrarily asymptotic to $0^G$, we deduce that $\sigma_s\circ \sigma_t = \Id$.
\par 
Now let $y \in V^G$ be a configuration asymptotic to $0^G$ and let $x=\sigma_s(y)$. Then we have seen that $\sigma_s(y)= \sigma_s(\sigma_t(x))$. As $\sigma_t(x)$ is also asymptotic to $0^G$, we infer from the pre-injectivity of $\sigma_s$ that $y= \sigma_t(x) = \sigma_t(\sigma_s(y))$. As $\sigma_s$ and $\sigma_t$ have finite memory and as $y \in V^G$ can be arbitrarily asymptotic to $0^G$, we can also conclude that $\sigma_t\circ \sigma_s = \Id$. The proof is thus complete. 
\end{proof}

\par 
As an application, we obtain the following characterization of invertible linear NUCA over finite alphabets in terms of pre-injectivity and stable post-surjectivity. 

\begin{theorem}
\label{t:characterization-invertible-linear-NUCA}
Let $G$ be a countable group and let $V$ be a finite vector space. Suppose that $\tau \colon V^G \to V^G$ is a linear NUCA with finite memory. Then $\tau$ is invertible if and only if it is pre-injective and stably post-surjective. 
\end{theorem}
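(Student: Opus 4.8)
The plan is to derive this characterization of invertibility by combining Theorem~\ref{t:invertible-general-linear} with the already-established stability results for linear NUCA over finite alphabets. The forward direction (invertibility implies pre-injectivity and stable post-surjectivity) should be the easier half: if $\tau = \sigma_s$ is invertible, then $\tau^{-1}$ is a linear NUCA with finite memory, so writing $\tau^{-1} = \sigma_t$ we have $\sigma_s \circ \sigma_t = \Id$, whence Lemma~\ref{l:direct-stably-post-surjective-nuca-linear-finite-a} gives stable post-surjectivity. Pre-injectivity is immediate from injectivity (which follows from bijectivity). The converse direction is precisely the content of Theorem~\ref{t:invertible-general-linear}, so the only work is to assemble these two implications.

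First I would state the forward direction carefully. Suppose $\tau$ is invertible. By definition of invertibility, $\tau$ is bijective and $\tau^{-1}$ is a NUCA with finite memory; since $\tau$ is linear, $\tau^{-1}$ is linear as well, so $\tau^{-1} = \sigma_t$ for some $t \in S^G$ with $S = \LL(V^E,V)$ for a suitable finite memory $E$. Then $\sigma_s \circ \sigma_t = \Id$ where $\tau = \sigma_s$, and since $V$ is a finite vector space and $G$ is countable, Lemma~\ref{l:direct-stably-post-surjective-nuca-linear-finite-a} applies to give that $\sigma_s$ is stably post-surjective. Bijectivity of $\tau$ gives injectivity, hence pre-injectivity. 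This completes the forward implication.

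For the converse, suppose $\tau$ is pre-injective and stably post-surjective. This is exactly the hypothesis of Theorem~\ref{t:invertible-general-linear}, so $\tau$ is invertible. I should also observe that stable post-surjectivity in particular implies post-surjectivity (take $p = s \in \Sigma(s)$), hence surjectivity, which will be convenient if one wants to also phrase the statement so that it visibly strengthens the classical result. No additional obstacle arises here since Theorem~\ref{t:invertible-general-linear} does all the heavy lifting.

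The main point worth flagging — not so much an obstacle as a conceptual observation — is that one must resist the temptation to weaken "stably post-surjective" to "post-surjective" in the converse: by \cite[Example~14.1]{phung-tcs} there exist injective but non-stably-injective linear NUCA with finite memory, and dualizing via Theorem~\ref{t:intro-dual-properties-linear-dynamic} produces post-surjective linear NUCA that fail to be stably post-surjective, so the stability hypothesis is genuinely necessary and cannot be dropped. Thus the proof is short, but its content is that the correct generalization of \cite[Theorem~1.1]{kari-post-surjective} to linear NUCA requires exactly the pairing of pre-injectivity with \emph{stable} post-surjectivity.
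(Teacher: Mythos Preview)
Your proposal is correct and follows essentially the same route as the paper's own proof: the converse direction is exactly an application of Theorem~\ref{t:invertible-general-linear}, and the forward direction uses invertibility to produce a finite-memory inverse, then invokes Lemma~\ref{l:direct-stably-post-surjective-nuca-linear-finite-a} for stable post-surjectivity and observes that bijectivity trivially gives pre-injectivity. Your additional remark on why stable post-surjectivity cannot be weakened is not part of the proof proper but is a correct and relevant observation.
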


\begin{proof}
   Suppose first that $\tau$ is pre-injective and stably post-surjective. Then Theorem~\ref{t:invertible-general-linear} implies that $\tau$ is invertible. Conversely, assume that $\tau$ is invertible. Then  there exists a linear NUCA $\sigma\colon V^G \to V^G$ with finite memory such that $\tau \circ \sigma= \sigma \circ \tau = \Id$. We infer from Lemma~\ref{l:direct-stably-post-surjective-nuca-linear-finite-a} that $\tau$ is stably post-surjective. Since $\tau$ is invertible, it is also injective and in particular pre-injective. The conclusion thus follows. 
\end{proof}
\par 
Consequently, 
for linear NUCA, it is the stable post-surjectivity, which is a stronger property than post-surjectivity, that correctly makes up for the pre-injectivity to ensure the invertibility. 
\par

\section{Some dual properties of linear NUCA}
\label{s:dual-preoperties-main-result}
We can now state and prove the following dual properties between linear NUCA and their dual linear NUCA. In  particular, the result generalizes the case of CA \cite[Theorem~1.4]{bartholdi-duality-2017}. 

\begin{theorem}
\label{t:dual-properties-linear-dynamic}
Let $M$ be a finite subset of a countable group $G$. Let $V$ be a finite dimensional vector space over a field and let $S = \LL(V^M, V)$.  Then for every  $s \in S^G$, the following hold: 
\begin{enumerate}[\rm (i)]
    \item $\sigma_s$ is pre-injective $\Longleftrightarrow$ $\sigma_{s^*}$ is surjective;  
    \item 
    $\sigma_s$ is injective  $\Longleftrightarrow$   $\sigma_{s^*}$ is post-surjective. 
    \item 
     $\sigma_s$ is stably injective  $\Longleftrightarrow$   $\sigma_{s^*}$ is stably post-surjective. 
\item 
$\sigma_s$ is invertible   $\Longleftrightarrow$   $\sigma_{s^*}$ is invertible. 
\end{enumerate}
\end{theorem}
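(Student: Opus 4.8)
The plan is to translate each of the four dichotomies into a purely linear‑algebraic statement about kernels and images of $\sigma_s$ and $\sigma_{s^*}$, restricted either to the finitely supported configurations or to the full shift, and then to read off the equivalences from the orthogonality relations of Lemma~\ref{l:dual-property-1} together with the nondegeneracy of the natural pairing. The one thing to be careful about is the bookkeeping of ambient spaces: $V^*G$ is paired with $V^G$, while $(V^*)^G$ is paired with $VG$, and one uses repeatedly that, in either pairing, a subspace $W$ satisfies $W^\perp = (\text{the whole partner space})$ if and only if $W = 0$, the nontrivial direction being precisely nondegeneracy. Once this is fixed, every implication below is a one‑line consequence of Lemma~\ref{l:dual-property-1}, Lemma~\ref{l:duality-family}, and Lemma~\ref{l:duality-functoriality}.

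For (i) and (ii), I would first record the reformulations. By linearity, $\sigma_s$ is pre-injective if and only if $\Ker(\sigma_s\vert VG) = 0$, and injective if and only if $\Ker(\sigma_s\vert V^G) = 0$. Dually, since $\sigma_{s^*}$ has finite memory it carries $V^*G$ into itself; $\sigma_{s^*}$ is surjective if and only if $\im(\sigma_{s^*}\vert (V^*)^G) = (V^*)^G$, and $\sigma_{s^*}$ is post-surjective if and only if $\im(\sigma_{s^*}\vert V^*G) = V^*G$ (for this last equivalence, take $x = 0$ in the definition of post-surjectivity for one direction, and for the other correct an arbitrary $x$ by a finitely supported preimage of $y - \sigma_{s^*}(x)$). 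Then (i) is immediate from Lemma~\ref{l:dual-property-1}(i), which says $\im(\sigma_{s^*}\vert (V^*)^G) = \Ker(\sigma_s\vert VG)^\perp$: the right-hand side is all of $(V^*)^G$ precisely when $\Ker(\sigma_s\vert VG) = 0$. Likewise (ii) follows from Lemma~\ref{l:dual-property-1}(ii), $\im(\sigma_{s^*}\vert V^*G) = \Ker(\sigma_s\vert V^G)^\perp$, which equals $V^*G$ precisely when $\Ker(\sigma_s\vert V^G) = 0$.

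For (iii), I would invoke the $G$-equivariant bijection $\varphi \colon \Sigma(s) \to \Sigma(s^*)$, $p \mapsto p^*$, of Lemma~\ref{l:duality-family}. For each $p \in \Sigma(s)$, the map $\sigma_p$ is again a linear NUCA with memory $M$ and, by Definition~\ref{d:dual-config-local-map}, its dual is $\sigma_{p^*}$; applying part (ii) to $p$ shows that $\sigma_p$ is injective if and only if $\sigma_{p^*}$ is post-surjective. Letting $p$ range over $\Sigma(s)$, equivalently $p^* = \varphi(p)$ range over $\Sigma(s^*)$, one concludes that $\sigma_s$ is stably injective if and only if $\sigma_{s^*}$ is stably post-surjective.

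For (iv), I would combine the functoriality $(\sigma \circ \tau)^* = \tau^* \circ \sigma^*$ of Lemma~\ref{l:duality-functoriality} with the identity $\Id^* = \Id$, which holds because Lemma~\ref{l:duality-pairing} gives $\langle \Id^*(\omega) \vert c \rangle = \langle \omega \vert c \rangle$ for all $\omega \in V^*G$ and $c \in V^G$, so nondegeneracy forces $\Id^*$ to equal $\Id$ on $V^*G$, hence everywhere since $\Id^*$ is a NUCA with finite memory. If $\sigma_s$ is invertible, then $\sigma_s^{-1}$ is linear (being the inverse of a linear bijection) and is a NUCA with finite memory, hence a linear NUCA $\sigma_t$ with finite memory; after replacing $M$ by a common symmetric memory set for $\sigma_s$ and $\sigma_t$ and applying Lemma~\ref{l:duality-functoriality} to $\sigma_s \circ \sigma_t = \sigma_t \circ \sigma_s = \Id$, one gets $\sigma_{t^*} \circ \sigma_{s^*} = \sigma_{s^*} \circ \sigma_{t^*} = \Id$, so $\sigma_{s^*}$ is invertible with inverse the finite-memory linear NUCA $\sigma_{t^*}$. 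The converse is obtained by applying this to $s^*$ and using $s^{**} = s$ from Lemma~\ref{l:dual-dual}. I do not expect any genuine obstacle here beyond the bookkeeping of the four ambient spaces flagged in the first paragraph and the elementary translations of post-surjectivity and surjectivity into image conditions; no step requires an idea beyond the cited lemmas.
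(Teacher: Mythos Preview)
Your proof is correct and follows essentially the same approach as the paper: reformulate (i) and (ii) as kernel/image conditions and read them off from Lemma~\ref{l:dual-property-1} via nondegeneracy, derive (iii) from (ii) using the bijection $\Sigma(s)\to\Sigma(s^*)$ of Lemma~\ref{l:duality-family}, and obtain (iv) from the functoriality Lemma~\ref{l:duality-functoriality}. You supply a little more detail than the paper does---spelling out the equivalence between post-surjectivity and $\im(\sigma_{s^*}\vert V^*G)=V^*G$, and justifying $\Id^*=\Id$ via the pairing---but the structure of the argument is identical.
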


\begin{proof}
Observe first that $\sigma_s$ is pre-injective if and only if $\Ker(\sigma_s\vert VG)=0$ and $\sigma_{s^*}$ is surjective if and only if $\im\left(\sigma_{s^*}\vert ( V^*)^G\right)=(V^*)^G$. Combining these characterizations with Lemma~\ref{l:dual-property-1}.(i),(iv), the identities $s^{**}=s$, $V^{**}= V$, and the non degeneration of the natural pairing $\langle \cdot \vert \cdot \rangle$,  we deduce immediately that $\sigma_s$ is pre-injective if and only if  $\sigma_{s^*}$ is surjective. Thus,  point (i) is proved. 
\par 
Similarly, for (ii),  it suffices to apply 
Lemma~\ref{l:dual-property-1}.(ii),(iii) and note that $\sigma_s$ is injective if and only if $\Ker(\sigma_s\vert V^G)=0$ and $\sigma_{s^*}$ is post-surjective if and only if $\im\left(\sigma_{s^*}\vert  V^*G\right)=V^*G$. 
\par 
For (iii), suppose first that $\sigma_s$ is stably injective and let $p \in \Sigma(s^*)$. Then we infer from Lemma~\ref{l:duality-family} that $p^* \in \Sigma(s)$. It follows that $\sigma_{p^*}$ is injective. Consequently, point (ii) implies that $\sigma_{p^{**}} = \sigma_p$ is post-surjective. Since $p \in \Sigma(s^*)$ can be arbitrary, we deduce that $\sigma_{s^*}$ is stably post-surjective. By a similar argument, we also find that if $\sigma_{s^*}$ is stably post-surjective then $\sigma_s$ is stably injective. Point (iii) is thus proved. 
\par 
For (iv), suppose that $\sigma_s$ is invertible. Then up to enlarging $M$, there exists $t \in S^G$ such that $\sigma_t\circ \sigma_s= \sigma_s \circ \sigma_t= \Id$. Hence, we infer from Lemma~\ref{l:duality-functoriality} that 
\[
\sigma_{s^*}\circ \sigma_{t^*} = (\sigma_t\circ \sigma_s)^*=\Id^*=\Id, 
\]
and similarly, 
\[
\sigma_{t^*}\circ \sigma_{s^*} = (\sigma_s\circ \sigma_t)^*=\Id^*=\Id. 
\]
\par 
It follows that the dual linear NUCA  $\sigma_{s^*}$ is also invertible. Point (iv) is proved and the proof is thus complete. 
\end{proof}

\begin{proof}[Another proof of Theorem~\ref{t:dual-properties-linear-dynamic}.(iv) when $V$ is finite] 
Suppose that $V$ is finite and $\sigma_s$ is invertible. Then up to enlarging $M$, there exists $t \in S^G$ such that $\sigma_t\circ \sigma_s= \sigma_s \circ \sigma_t= \Id$. We infer from  \cite[Theorem~11.1]{phung-tcs} and the relation $\sigma_t\circ \sigma_s$  that $\sigma_s$ is stably injective. Hence, Theorem~\ref{t:dual-properties-linear-dynamic}.(ii) tells us that $\sigma_{s^*}$ is stably post-surjective. 
On the other hand, the surjectivity of $\sigma_s$, which results from the relation $\sigma_s \circ \sigma_t= \Id$, implies that $\sigma_{s^*}$ is pre-injective by Theorem~\ref{t:dual-properties-linear-dynamic}.(i). 
 Consequently, we can apply Theorem~\ref{t:invertible-general-linear} to conclude that $\sigma_{s^*}$ is invertible. By replacing $s$ by $s^*$ and note that $s^{**}=s$, we find that if $\sigma_{s^*}$ is invertible then so is $\sigma_s$. Theorem~\ref{t:dual-properties-linear-dynamic}.(iv) is thus proved. 
\end{proof}

\section{Characterizations of stable injectivity and stable post-surjectivity for linear NUCA} 
\label{s:characterization-left-inevertible}

In this section, we will characterize the stable injectivity property and the stable post-surjectivity property respectively in terms of left invertibility and right invertibility 
in the case of linear NUCA. 

\par 
\begin{theorem}
    \label{t:characterization-left-right-invertibility}
Let $V$ be a finite  vector space and let $G$ be a countable group. Suppose that $\sigma \colon V^G \to V^G$ is a linear NUCA with finite memory. Then the following hold: 
\begin{enumerate}[\rm (i)]
    \item 
    $\sigma$ is stably injective if and only if  $\tau \circ \sigma= \Id$ for some linear NUCA $\tau \colon V^G \to V^G$ with finite memory;
    \item 
     $\sigma$ is stably post-surjective  if and only if  $\sigma \circ \tau = \Id$ for some linear NUCA $\tau \colon V^G \to V^G$ with finite memory. 
\end{enumerate}
\end{theorem}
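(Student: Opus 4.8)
The plan is to reduce all four implications to a single statement, namely that stable injectivity implies left-invertibility, and to recover everything else by duality. The implication ``$\sigma$ right-invertible $\Rightarrow$ $\sigma$ stably post-surjective'' in (ii) is already Lemma~\ref{l:direct-stably-post-surjective-nuca-linear-finite-a}. For the remaining directions, write $\sigma=\sigma_s$; by Lemma~\ref{l:duality-functoriality}, Lemma~\ref{l:dual-dual}, and $\Id^*=\Id$, a relation $\tau\circ\sigma=\Id$ dualizes to $\sigma^*\circ\tau^*=\Id$ and conversely, with $\tau^*$ again of finite memory. Since $V^*$ is again a finite vector space, Theorem~\ref{t:dual-properties-linear-dynamic}.(iii), applied to $s$ and to $s^*$, converts stable injectivity of $\sigma_s$ to stable post-surjectivity of $\sigma_{s^*}$ and back. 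So I would obtain (i)$\Leftarrow$ from Lemma~\ref{l:direct-stably-post-surjective-nuca-linear-finite-a} applied to $\sigma_{s^*}$ together with Theorem~\ref{t:dual-properties-linear-dynamic}.(iii), and (ii)$\Rightarrow$ from the statement ``stable injectivity $\Rightarrow$ left-invertibility'' applied to $\sigma_{s^*}$, dualizing the resulting left inverse back to $V^G$. This leaves only: if $\sigma_s$ is stably injective, then $\tau\circ\sigma_s=\Id$ for some finite-memory linear NUCA $\tau$.

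The heart of this is a uniform co-injectivity estimate that I would prove first: there is a finite $E\subset G$ with $1_G\in M\subseteq E$ such that for every $g\in G$ the kernel of the induced linear map $f^+_{gE,s\vert_{gE}}\colon V^{gEM}\to V^{gE}$ contains no $u$ with $u(g)\neq 0$. Here I would use that $S=\LL(V^M,V)$ is finite because $V$ is, so $\Sigma(s)$ is compact. Assume the estimate fails: fix an exhaustion $(E_n)$ of $G$ by finite sets containing $M\cup\{1_G\}$, and for each $n$ choose $g_n\in G$ and $u_n\in\Ker f^+_{g_nE_n,s\vert_{g_nE_n}}$ with $u_n(g_n)\neq 0$. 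Translating by $g_n^{-1}$, via the standard behaviour $f^+_{gF,(gt)\vert_{gF}}(gx)=g\,f^+_{F,t\vert_F}(x)$ of induced local maps under translation, I get $x_n\in\Ker f^+_{E_n,(g_n^{-1}s)\vert_{E_n}}$ with $x_n(1_G)\neq 0$. Since $S$ is finite, a diagonal extraction gives a subsequence along which $g_n^{-1}s$ converges to some $p\in\Sigma(s)$; then, by locality of induced maps, for each fixed $j$ the set $\{v\in V^{E_jM}\colon f^+_{E_j,p\vert_{E_j}}(v)=0,\ v(1_G)\neq 0\}$ is finite and nonempty, and these sets form an inverse system under restriction, so the inverse limit is nonempty by a König-type argument. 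An element of this inverse limit is a configuration $u\in V^G$ with $\sigma_p(u)=0$ (by \eqref{e:induced-local-maps-proof}) and $u(1_G)\neq 0$, so $\sigma_p$ is not injective, contradicting stable injectivity of $\sigma_s$.

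Granting the estimate, I would build $\tau$ cell by cell. For each $g\in G$ the estimate says the linear functional $\mathrm{ev}_g\colon V^{gEM}\to V$, $u\mapsto u(g)$, vanishes on $\Ker f^+_{gE,s\vert_{gE}}$, hence factors as $\lambda_g\circ f^+_{gE,s\vert_{gE}}$ for some linear map $\lambda_g$ on $\im f^+_{gE,s\vert_{gE}}\subseteq V^{gE}$; I would extend $\lambda_g$ to a linear map $V^{gE}\to V$ and set $t(g)(v):=\lambda_g(gv)$ for $v\in V^E$, so $t\in\left(\LL(V^E,V)\right)^G$. Then $\tau:=\sigma_t$ has finite memory $E$, and for all $x\in V^G$ and $g\in G$, using $\sigma_s(x)\vert_{gE}=f^+_{gE,s\vert_{gE}}(x\vert_{gEM})$, one gets $\tau(\sigma_s(x))(g)=\lambda_g\bigl(f^+_{gE,s\vert_{gE}}(x\vert_{gEM})\bigr)=\mathrm{ev}_g(x\vert_{gEM})=x(g)$, i.e.\ $\tau\circ\sigma_s=\Id$, which proves the remaining statement and hence the theorem.

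The hard part will be the uniform co-injectivity estimate: plain injectivity of $\sigma_s$ already yields, for each individual $g$, a finite $E_g$ that works, and the real difficulty is upgrading this to one $E$ valid for all $g$ simultaneously — which is exactly where the compactness of $\Sigma(s)$, i.e.\ the \emph{stable} injectivity hypothesis combined with the finiteness of $V$, is needed, via the diagonal extraction and the inverse-limit argument. The translation and restriction bookkeeping for induced local maps is routine but should be done carefully.
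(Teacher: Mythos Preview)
Your proposal is correct. The duality reductions you outline for (ii) match the paper's own argument essentially verbatim: the paper also derives (ii) from (i) via Theorem~\ref{t:dual-properties-linear-dynamic}.(iii) and Lemma~\ref{l:duality-functoriality}.

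The genuine difference lies in (i). The paper disposes of (i) in one line by invoking \cite[Theorem~A]{phung-tcs}, which already establishes the equivalence of stable injectivity and left-invertibility for arbitrary NUCA over finite alphabets, and then observes that a left inverse of a linear NUCA can be taken linear. You instead give a self-contained proof of the hard implication ``stably injective $\Rightarrow$ left-invertible'': your uniform co-injectivity estimate, proved by translating a hypothetical sequence of bad windows back to the identity, extracting a limit $p\in\Sigma(s)$ by compactness of the finite set $S^G$, and then running an inverse-limit (K\"onig) argument to produce a nonzero element of $\Ker\sigma_p$, is precisely the engine behind \cite[Theorem~A]{phung-tcs} specialised to the linear setting. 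Your subsequent construction of $t(g)$ by factoring $\mathrm{ev}_g$ through $f^+_{gE,s\vert_{gE}}$ is clean and automatically produces a \emph{linear} left inverse, which is slightly more direct than the paper's appeal to the general (nonlinear) theorem followed by a linearity remark. For (i)$\Leftarrow$ you go through duality and Lemma~\ref{l:direct-stably-post-surjective-nuca-linear-finite-a}, whereas the paper again just quotes the external reference; both routes are valid, and yours keeps everything internal to the present paper. In short: same architecture for (ii), but for (i) you reprove the cited black box rather than invoke it, at the cost of more work but with the benefit of a fully self-contained argument.
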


\begin{proof}
    By \cite[Theorem~A]{phung-tcs}, we know that $\sigma$ is stably injective if and only if it is left invertible, i.e., there exists a NUCA $\tau \colon V^G \to V^G$ with finite memory such that $\tau \circ \sigma = \Id$. Since $\sigma$ is linear, it is immediate that $\tau$ is also a linear NUCA. Assertion (i) is thus proved. 
    \par 
    For (ii), let $\sigma^*$ be the dual linear NUCA of $\sigma$. It follows from Theorem~\ref{t:dual-properties-linear-dynamic}.(ii) that $\sigma=(\sigma^*)^*$ is stably post-surjective if and only if $\sigma^*$ is stably injective. On the other hand, assertion (i) implies that $\sigma^*$ is stably injective if and only if there exists a linear NUCA $\tau\colon V^{*G} \to V^{*G}$ with finite memory such that $\tau \circ \sigma^*= \Id$, or equivalently, $\sigma \circ \tau^*=\Id$ by 
    Lemma~\ref{l:duality-functoriality}.  Since $\tau^* \colon V^G \to V^G$ is a linear NUCA with finite memory, we conclude that $\sigma$ is stably post-surjective if and only if $\sigma \circ \pi=\Id$ for some linear NUCA $\tau\colon V^{G} \to V^{G}$ with finite memory. Assertion (ii) follows and the proof is complete. 
\end{proof}

\section{Applications on the dual surjunctivity} 
\label{s:dual-surjunctivity}

As a consequence of Theorem~\ref{t:dual-properties-linear-dynamic}, we obtain the following dual result of \cite[Theorem~B]{phung-tcs} for linear NUCA.

\begin{corollary}
Let $M$ be a finite subset of a countable group $G$. Let $V$ be a finite  vector space and let $S = \LL(V^M, V)$. Let $s \in S^G$ be asymptotic to a constant configuration.  
Then $\sigma_s$ is invertible in each of the following cases: 
\begin{enumerate} [\rm (i)]
\item $G$ is amenable and $\sigma_s$ is post-surjective;
\item $G$ is residually finite and $\sigma_s$ is stably post-surjective. 
\end{enumerate}
\end{corollary}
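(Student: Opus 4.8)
The plan is to reduce the corollary to Theorem~\ref{t:dual-properties-linear-dynamic} together with the surjunctivity result \cite[Theorem~B]{phung-tcs} applied to the dual linear NUCA. First observe that if $s \in S^G$ is asymptotic to a constant configuration $c_0 \in S^G$, then the dual configuration $s^*$ is asymptotic to the constant configuration $c_0^*$: indeed, from the defining formula \eqref{e:def-dual-s} the value $s^*(g,m) = s(gm,m^{-1})^\mathsf{T}$ depends only on the restriction of $s$ to a bounded neighbourhood of $g$ (recall $m$ ranges over the finite set $M^{-1}$), so $s$ and $c_0$ agreeing outside a finite set forces $s^*$ and $c_0^*$ to agree outside a (slightly larger) finite set. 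Hence $s^*$ also satisfies the hypothesis of being asymptotic to a constant configuration, and $\sigma_{s^*}$ is a linear NUCA with finite memory over the finite vector space alphabet $V^*$.

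Next I would translate the post-surjectivity hypotheses into injectivity hypotheses on the dual via Theorem~\ref{t:dual-properties-linear-dynamic}. In case (i), $\sigma_s$ post-surjective implies in particular $\sigma_s$ surjective, hence by Theorem~\ref{t:dual-properties-linear-dynamic}.(i) the dual $\sigma_{s^*}$ is pre-injective; moreover, since $\sigma_s$ is post-surjective it is certainly surjective, and by Theorem~\ref{t:dual-properties-linear-dynamic}.(ii), $\sigma_s$ post-surjective is not quite what we want — rather we use that $\sigma_{s^*}$ being pre-injective over a finite vector space alphabet on an amenable group $G$ is enough, by \cite[Theorem~B]{phung-tcs} (surjunctivity: a pre-injective linear NUCA asymptotic to a constant configuration over an amenable group is invertible, or at least injective), to conclude $\sigma_{s^*}$ is injective, and then again by Theorem~\ref{t:dual-properties-linear-dynamic}.(ii) that $\sigma_s = \sigma_{s^{**}}$ is post-surjective — but we already knew that. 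The correct route: in case (i), $\sigma_{s^*}$ is pre-injective (from surjectivity of $\sigma_s$), and $G$ amenable plus the surjunctivity theorem \cite[Theorem~B]{phung-tcs} upgrades pre-injectivity of $\sigma_{s^*}$ to injectivity of $\sigma_{s^*}$; then Theorem~\ref{t:dual-properties-linear-dynamic}.(ii) gives that $\sigma_s$ is post-surjective (known) — so instead I should upgrade on the $\sigma_s$ side: $\sigma_s$ post-surjective $\Rightarrow$ $\sigma_{s^*}$ injective by Theorem~\ref{t:dual-properties-linear-dynamic}.(ii), and $G$ amenable $\Rightarrow$ surjunctivity $\Rightarrow$ $\sigma_{s^*}$ is \emph{invertible} by the linear NUCA surjunctivity statement applied to the injective linear NUCA $\sigma_{s^*}$ asymptotic to a constant; finally Theorem~\ref{t:dual-properties-linear-dynamic}.(iv) transports invertibility back: $\sigma_{s^*}$ invertible $\iff$ $\sigma_{s^{**}} = \sigma_s$ invertible.

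For case (ii), I would argue analogously: $\sigma_s$ stably post-surjective implies, by Theorem~\ref{t:dual-properties-linear-dynamic}.(iii), that $\sigma_{s^*}$ is stably injective; since $G$ is residually finite and $s^*$ is asymptotic to a constant configuration over the finite vector space alphabet $V^*$, the residually-finite case of the surjunctivity result \cite[Theorem~B]{phung-tcs} applies to the stably injective (a fortiori injective) linear NUCA $\sigma_{s^*}$ and yields that $\sigma_{s^*}$ is invertible; then Theorem~\ref{t:dual-properties-linear-dynamic}.(iv) gives that $\sigma_s = \sigma_{s^{**}}$ is invertible. In both cases the conclusion follows.

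The main obstacle I anticipate is purely bookkeeping rather than conceptual: making sure that \cite[Theorem~B]{phung-tcs} is invoked with exactly the right hypotheses on the dual side — namely that "asymptotic to a constant configuration" is preserved under taking duals (the easy lemma sketched above), and that the surjunctivity conclusion there is strong enough to deliver \emph{invertibility} of $\sigma_{s^*}$ and not merely injectivity or surjectivity, so that Theorem~\ref{t:dual-properties-linear-dynamic}.(iv) can be applied to pull it back to $\sigma_s$. If \cite[Theorem~B]{phung-tcs} only yields bijectivity (not invertibility with finite-memory inverse), one instead combines it with Theorem~\ref{t:characterization-invertible-linear-NUCA}: a post-surjective (resp. stably post-surjective) $\sigma_{s^*}$ that is also pre-injective (resp. stably injective) is invertible by that theorem, and pre-injectivity of $\sigma_{s^*}$ comes for free from surjectivity of $\sigma_s$ via Theorem~\ref{t:dual-properties-linear-dynamic}.(i) in case (i), while in case (ii) stable injectivity of $\sigma_{s^*}$ is exactly Theorem~\ref{t:dual-properties-linear-dynamic}.(iii).
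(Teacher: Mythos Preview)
Your proposal is correct and follows essentially the same route as the paper: translate the post-surjectivity hypotheses on $\sigma_s$ into injectivity hypotheses on $\sigma_{s^*}$ via Theorem~\ref{t:dual-properties-linear-dynamic}.(ii)--(iii), apply \cite[Theorem~B]{phung-tcs} to $\sigma_{s^*}$, and pull invertibility back with Theorem~\ref{t:dual-properties-linear-dynamic}.(iv). Your explicit check that $s^*$ is again asymptotic to a constant configuration is a useful detail the paper leaves implicit; the meandering in case~(i) should be trimmed, but the route you finally settle on is the right one.
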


\begin{proof}
By Theorem~\ref{t:dual-properties-linear-dynamic}.(ii)-(iii), we know that $\sigma_s$ is post-surjective, resp. stably post-surjective, if and only if $\sigma_{s^*}$ is injective, resp. stably injective. Moreover, Theorem~\ref{t:dual-properties-linear-dynamic}.(iv) implies that  $\sigma_s$ is invertible  if and only if so is $\sigma_{s^*}$. We deduce that the corollary is in fact equivalent to \cite[Theorem~B]{phung-tcs} specialized to the case of linear NUCA over finite alphabets. The proof is thus complete.  
\end{proof}

\par 
Similarly, the dual linear version of \cite[Theorem~C]{phung-tcs} can be stated as follows: 

\begin{corollary}
\label{t:dual-linear-singularity}
Let $M \subset \Z^d$ be a finite subset. Let $V$ be a finite vector space   and let $S=\LL(V^M,V)$. Suppose that  for some $s \in S^{\Z^d}$, the linear  NUCA  $\sigma_s$ is stably post-surjective with bounded singularity. 
Then $\sigma_s$ is invertible. \qed
\end{corollary}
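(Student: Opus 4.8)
The plan is to deduce Corollary~\ref{t:dual-linear-singularity} from the corresponding result for the \emph{primal} linear NUCA, namely \cite[Theorem~C]{phung-tcs} specialized to linear NUCA over finite alphabets, by transporting the hypotheses across the duality established in Theorem~\ref{t:dual-properties-linear-dynamic}. First I would recall what \cite[Theorem~C]{phung-tcs} asserts: a linear NUCA $\sigma_p \colon V^{\Z^d} \to V^{\Z^d}$ with finite memory over a finite vector space which is \emph{stably injective with bounded singularity} is invertible. So the strategy is to show that $\sigma_s$ being stably post-surjective with bounded singularity is equivalent, via the dual, to $\sigma_{s^*}$ being stably injective with bounded singularity.

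The key steps, in order, are as follows. (1) By Theorem~\ref{t:dual-properties-linear-dynamic}.(iii) (applied with $G = \Z^d$, which is countable), $\sigma_s$ is stably post-surjective if and only if $\sigma_{s^*}$ is stably injective; here one uses $s^{**}=s$ (Lemma~\ref{l:dual-dual}) and the fact that $V^*$ is again a finite vector space of the same dimension. (2) One checks that the ``bounded singularity'' condition on the configuration $s$ is preserved under passing to the dual configuration $s^*$: by Definition~\ref{d:dual-config-local-map}, $s^*(g,m) = s(gm,m^{-1})^\mathsf{T}$, so the set of cells $g$ at which $s^*(g)$ differs from the ``generic'' local rule is a translate-by-bounded-amount of the corresponding set for $s$; in particular $s$ has bounded singularity if and only if $s^*$ does. (3) Combining (1) and (2), the hypotheses of the corollary on $\sigma_s$ are equivalent to the hypotheses of \cite[Theorem~C]{phung-tcs} on $\sigma_{s^*}$, hence $\sigma_{s^*}$ is invertible. (4) Finally, Theorem~\ref{t:dual-properties-linear-dynamic}.(iv) (or Lemma~\ref{l:duality-functoriality} directly, as in the proof of that part) gives that $\sigma_s$ is invertible if and only if $\sigma_{s^*}$ is; since $s^{**}=s$ this is a genuine equivalence, so invertibility of $\sigma_{s^*}$ yields invertibility of $\sigma_s$.

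The main obstacle I expect is step (2): one must pin down precisely what ``bounded singularity'' means in \cite[Theorem~C]{phung-tcs} and verify carefully that it is a self-dual condition. The definition presumably says that $s$ agrees, outside of a bounded-width ``singular'' region (e.g.\ outside a finite union of cosets of a finite-index subgroup, or outside a set of bounded upper density, depending on the exact formulation), with a configuration of a more rigid type. Because the dual operation $s \mapsto s^*$ acts on the index $g$ only by the bounded shift $g \mapsto gm$ with $m \in M^{-1}$ (a finite, hence bounded, set) and transposes the matrices $s(g,m)$, it sends a ``singular set'' of $s$ to a set contained in a bounded neighborhood of itself, and it sends the rigid model configuration for $s$ to the rigid model configuration for $s^*$ (using functoriality of transposition and that constant configurations of local rules dualize to constant configurations). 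Once this bookkeeping is done, everything else is a formal transport across the duality dictionary already assembled in Sections~\ref{s:dual-linear-nuca} and~\ref{s:dual-preoperties-main-result}, exactly parallel to the proof of the previous corollary, so the corollary follows. \qed
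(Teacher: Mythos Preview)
Your proposal is correct and follows exactly the approach the paper intends: the corollary is stated with a bare \qed\ after the remark ``Similarly, the dual linear version of \cite[Theorem~C]{phung-tcs} can be stated as follows,'' so the paper's proof is the same duality transport via Theorem~\ref{t:dual-properties-linear-dynamic}.(iii),(iv) that you outline, parallel to the proof of the preceding corollary. Your step~(2) --- that bounded singularity is preserved under $s\mapsto s^*$ --- is indeed the only point requiring verification, and the paper leaves it implicit; your sketch (the dual shifts the cell index by elements of the finite set $M^{-1}$ and transposes the matrices, so any box-periodicity condition on $s$ near the boundary transfers to $s^*$ after enlarging the box by $M$) is the right idea and more explicit than what the paper provides.
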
 

Here, we say that     $\sigma_s$ has \emph{bounded singularity} if for all finite subset $E \subset \Z^d$, there exists a box $K = \prod_{j=1}^d\llbracket a_j,b_j \rrbracket^d \subset \Z^d$ where $\llbracket a_j,b_j \rrbracket = \{a_j,  \dots, b_j\}$ which contains $E$ such that  
$s(g) = s(k)$ for all $g \in KE \setminus K$ and the unique $k \in K$ with  $k \equiv g$ (mod $K$), i.e.,  
$k_j \equiv g_j$ (mod $a_j-b_j+1$) for every $j=1, \dots, d$ where $g=(g_1, \dots, g_d)$ and $k = (k_1, \dots, k_d)$.

\section{Bijectivity is not a dual property for linear NUCA}
\label{s:bijectivity-not-dual}

In this section, we show that in contrast to the case of linear CA over finite dimensional vector space alphabets, the equivalence between the bijectivity of a linear NUCA and the bijectivity of its dual linear NUCA is  no longer valid. Consequently, we infer from Theorem~\ref{t:dual-properties-linear-dynamic} that bijectivity is not equivalent to pre-injectivity and post-surjectivity combined. 
In particular, the result \cite[Theorem~1]{kari-post-surjective} for CA fails for the more general class of NUCA. 

\begin{proposition}
\label{p:bijective-non-dual}
Let $A= \Z/2\Z$. There exists a bijective non-post-surjective linear $\mathrm{NUCA}$ $\tau \colon A^\Z \to A^\Z$. Moreover, its dual linear $\mathrm{NUCA}$ $\tau^* \colon V^\Z \to V^\Z$ is pre-injective and post-surjective but is not injective.  
\end{proposition}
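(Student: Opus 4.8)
The plan is to exhibit an explicit one-dimensional linear NUCA over $A = \Z/2\Z$ that is bijective but whose inverse is \emph{not} a NUCA with finite memory, so that in particular it fails to be post-surjective (since a post-surjective bijective linear NUCA over a finite vector space is invertible by Theorem~\ref{t:characterization-invertible-linear-NUCA}). The natural candidate is a ``one-sided shift with a defect'': take memory $M = \{0,1\} \subset \Z$ and define $s \in S^{\Z}$ so that $\sigma_s(x)(n) = x(n+1)$ for all $n \neq 0$ while $\sigma_s(x)(0) = x(0)$. Concretely, $s(n)$ is the projection to the coordinate $1 \in M$ for $n \neq 0$ and $s(0)$ is the projection to the coordinate $0 \in M$. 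First I would check bijectivity by hand: given $y \in A^{\Z}$, one reconstructs $x$ by $x(n) = y(n-1)$ for $n \geq 1$, $x(0) = y(0)$, and $x(n) = y(n-1)$ for $n \leq -1$; injectivity and surjectivity are then immediate from this formula, which shows $\tau = \sigma_s$ is bijective.

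Next I would argue that $\tau$ is \emph{not} post-surjective. The inverse map sends $y$ to the configuration whose value at $n$ for $n \le -1$ is $y(n-1)$ — i.e. to the \emph{left}, the inverse shifts coordinates by $-1$ with no bound on how far information propagates; equivalently, $\tau^{-1}$ has memory $\{-1,0\}$ on the negative side but there is no single finite memory because the ``seam'' at $0$ forces $x(n)$ for large negative $n$ to depend on $y(n-1)$, which is fine, but $x(n)$ for $n \ge 1$ depends on $y(n-1)$ as well — wait, this inverse actually does have finite memory. I must pick the example more carefully: the correct classical construction (cf.\ the non-reversible bijective NUCA of \cite[\S13]{phung-tcs}) is to make $\sigma_s$ act as the shift $x \mapsto (n \mapsto x(n+1))$ on all of $\Z$, which is bijective with inverse the shift in the other direction, but then perturb it at a single cell so that the inverse must ``read'' an unbounded half-line. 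The cleanest route is: $\sigma_s(x)(n) = x(n) + x(n+1)$ for $n \ge 1$, $\sigma_s(x)(n) = x(n+1)$ for $n \le 0$; then solving for $x$ from $y$ gives $x(n) = \sum_{k \ge n} y(k)$ for $n \ge 1$ (a well-defined finite sum only if $y$ has finite support to the right, but on the full shift this is not locally computable), showing $\tau^{-1}$ is not a NUCA with finite memory while $\tau$ remains a bijection on $A^{\Z}$ by the same back-substitution. By Theorem~\ref{t:characterization-invertible-linear-NUCA} (or Lemma~\ref{l:direct-stably-post-surjective-nuca-linear-finite-a} together with the fact that an invertible NUCA is injective), a bijective post-surjective linear NUCA over a finite vector space would be invertible; since $\tau$ is not invertible, $\tau$ is not post-surjective.

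For the dual, I would compute $s^*$ directly from Definition~\ref{d:dual-config-local-map}: transposition of a coordinate projection on $A = A^* = \Z/2\Z$ is again a coordinate projection, and the index shift $s^*(g,m) = s(gm, m^{-1})^{\mathsf T}$ turns the ``seam at $0$'' of $\tau$ into a seam at a translated location for $\tau^*$, with the local rule now reading coordinates $\{-1,0\}$ (resp.\ $\{0\}$) instead of $\{0,1\}$ — i.e.\ $\tau^*$ is the analogous perturbed shift in the opposite direction. Then by Theorem~\ref{t:dual-properties-linear-dynamic}.(i), $\tau^*$ is surjective because $\tau$ is pre-injective (indeed injective); by Theorem~\ref{t:dual-properties-linear-dynamic}.(ii), $\tau^*$ is \emph{not} injective because $\tau$ is not post-surjective; and I would verify pre-injectivity of $\tau^*$ by hand — two finitely-supported-difference configurations mapping to the same image must agree, which follows from the same back-substitution used for bijectivity of $\tau$, since on configurations asymptotic to $0$ the relevant sums $\sum_{k \ge n} (\cdot)$ are finite. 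Post-surjectivity of $\tau^*$ then follows from Theorem~\ref{t:dual-properties-linear-dynamic}.(i) applied to $\tau^{**} = \tau$: $\tau^*$ post-surjective $\iff$ $\tau^{**} = \tau$ injective, which holds; alternatively, one observes $\tau^*$ restricted to $\bigoplus_{\Z} A$ is bijective with NUCA inverse on that subspace. The main obstacle I anticipate is choosing the perturbation so that all four claimed properties hold \emph{simultaneously} and transparently — in particular making the failure of finite-memory invertibility of $\tau$ genuinely visible (the inverse must integrate an unbounded half-line) while keeping $\tau$ honestly bijective on the full shift and keeping $\tau^*$ pre-injective; once the right rule is fixed, every verification reduces to the elementary back-substitution above plus invocations of Theorem~\ref{t:dual-properties-linear-dynamic} and Theorem~\ref{t:characterization-invertible-linear-NUCA}.
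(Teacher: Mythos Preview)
Your overall strategy and your (second) example are essentially the paper's own: build a one-dimensional linear NUCA that acts as the identity/shift on one half-line and as $x(n)+x(n\pm 1)$ on the other, check bijectivity by back-substitution, and then read off the properties of the dual from Theorem~\ref{t:dual-properties-linear-dynamic}. The paper uses $M=\{-1,0\}$ with $\sigma_s(x)(n)=x(n)$ for $n\le 0$ and $\sigma_s(x)(n)=x(n-1)+x(n)$ for $n\ge 1$; your variant with $M=\{0,1\}$ is equivalent up to reflection.

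There is, however, a genuine gap in your deduction that $\tau$ is \emph{not} post-surjective. You invoke Theorem~\ref{t:characterization-invertible-linear-NUCA} to say ``a bijective post-surjective linear NUCA over a finite vector space would be invertible'', but that theorem characterizes invertibility via \emph{stable} post-surjectivity, not post-surjectivity. The distinction is precisely the point of this section: for NUCA, post-surjectivity is strictly weaker than stable post-surjectivity, and there is no reason bijectivity should upgrade one to the other. Your alternative citation of Lemma~\ref{l:direct-stably-post-surjective-nuca-linear-finite-a} goes in the wrong direction (right-invertible $\Rightarrow$ stably post-surjective). The paper closes this gap by a direct computation: take $y$ differing from $\sigma_s(x)$ only at one cell and show that the unique $x'$ with $\sigma_s(x')=y$ differs from $x$ on an entire half-line. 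For your $\tau$, the same works: with $x=0$ and $y=\delta_1$, back-substitution forces $x'(n)=1$ for all $n\ge 2$, so $x'$ is not asymptotic to $0$. You should argue this way.

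Two smaller points. First, your formula $x(n)=\sum_{k\ge n} y(k)$ is wrong: from $y(n)=x(n)+x(n+1)$ and $x(1)=y(0)$ one gets $x(n)=\sum_{k=0}^{n-1} y(k)$, which is always a finite sum but depends on unboundedly many coordinates as $n\to\infty$; the conclusion (no finite memory for $\tau^{-1}$) is correct. Second, you can skip the ``by hand'' verification of pre-injectivity of $\tau^*$: Theorem~\ref{t:dual-properties-linear-dynamic}.(i) applied to $s^*$ (using $s^{**}=s$) gives $\tau^*$ pre-injective $\iff$ $\tau$ surjective, which you already know. Similarly, $\tau^*$ post-surjective follows from part (ii), not (i), of that theorem.
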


Proposition~\ref{p:bijective-non-dual} will follow immediately from the next example.  We consider the following linear NUCA  with finite memory described in  \cite[Example~14.2]{phung-tcs} where it was shown 
to be bijective non-reversible and non-stably injective. 

\begin{example} 
\label{ex:1}
Let $G= \Z$ and let $M=\{-1,0\}$. Consider the linear maps $f,g \colon A^M \to A$ defined for all $(u,v)\in A^M$ by: 
\begin{equation}
    \label{e:counter-1-2-3}
f(u,v) = v,  \quad g(u,v) =  u+v \quad (\text{mod }2). 
\end{equation} 
\par 
Let $S=A^{A^M}$. For $k \in \Z\cup \{\pm \infty\}$, we  define $s_k \in S^\Z$  by setting $s_k(n)=f$ if $n \leq k$ and $s_k(n)=g$ if $n \geq k+1$.  
Let $s=s_0$ then  $\sigma_s\colon A^\Z \to A^\Z$ is a linear NUCA with finite memory which is bijective but not reversible and thus not stably injective (see \cite[Example~14.2]{phung-tcs}).  
\par 
Let $x \in A^G$ and  $y=\sigma_s(x)$. Then \cite[Example~14.2]{phung-tcs} shows that $x(n)=y(n)$ for $n \leq 0$ and if $n \geq 1$, we have:  
\[
x(n)= 
y(n) - y(n-1) + \dots + (-1)^{n} y(0) \text{ (mod }2).  
\]
\par 
Consequently, if $y' \in A^G$ is the configuration defined by $y'(0)=y(0)+1$ and $y'(n)=y(n)$ for all $n \neq 0$, then for the unique $x' \in A^G$ such that $\sigma_s(x')=y'$, we have $x'(n)=x(n)+1$ for all $n \geq 1$. In particular, $x'$ and $x$ are not asymptotic while $y'$ and $y$ are. This shows that $\sigma_s$ is surjective but not post-surjective. 
\par 
Therefore, Theorem~\ref{t:dual-properties-linear-dynamic}.(i) and (ii) respectively imply that the dual linear  NUCA $\sigma_{s^*}$ is pre-injective and post-surjective. However,   we infer from Theorem~\ref{t:dual-properties-linear-dynamic}.(iii) that $\sigma_{s^*}$ is not injective and thus not bijective. 
\par 
To illustrate, we shall reprove the above properties by working  directly with $\sigma_{s^*}$. Let $N=\{0,1\}$ and define the linear maps $f^*, g^*\colon V^N \to V$ by:
\[
f^*(u,v) = u, \quad g^*(u,v) = u+v \quad \text{(mod 2)}, \quad (u,v) \in V^N.
\]
\par 
Then by Definition~\ref{d:dual-config-local-map}, $s^*(n)=f^*$ if $n \leq -1$ and $s^*(n)=g^*$ if $n \geq 0$. Let $c \in V^G\setminus \{0^G\}$ be the configuration defined by $c(n)=0$ if $n \leq -1$ and $c(n)=1$ if $n \geq 0$. It is clear that $\sigma_{s^*}(c)=0^G$ and thus  $\sigma_{s^*}$ is not injective. 
\par 
In fact, a straightforward computation shows that $x \in \Ker(\sigma_{s^*})$ if and only if $x=0^G$ or $x=c$, depending on whether $x(0)=0$ or $x(0)=1$. In particular, $\sigma_{s^*}$ is pre-injective. To check that $\sigma_{s^*}$ is post-surjective, let $x \in V^G$ be asymptotic to $0^G$. Let $n_0 \geq 1$ be large enough such that $x(k)=0$ whenever $\vert k \vert \geq n_0$. Consider $y \in V^G$ where $y(k)=0$ for $\vert k \vert \geq n_0$, $y(k)=x(k)$ if $n_0< k \leq -1$, and for $0 \leq k <n_0$, we define: 
\[
y(k) = x(k) - x(k-1) + \dots +(-1)^{n_0-k+1}x(n_0-1). 
\]
\par 
Then a direct computation implies that $\sigma_{s^*}(y)=x$. Since $y$ is asymptotic to $0^G$, we conclude that $\sigma_{s^*}$ is post-surjective. 
\end{example}

\section{Column factorizations} 
\label{s:kurka-construction} 
We recall the notion of \emph{column factorizations} or \emph{canonical factors} of Kurka \cite{kurka-97} and  Blanchard-Maass \cite{blanchard-maass-97}) which was later generalized in \cite{phung-shadowing}.  
Let $G$ be a countable group and let $A$ be a set. 
Suppose that $\tau_1, \dots, \tau_r\colon A^G \to A^G$  
are pairwise commuting NUCA. We define $\tau_\alpha = \tau_1^{a_1}   \dots    \tau_r^{a_r}$ 
as the composition $\tau_1^{a_1} \circ   \dots \circ    \tau_r^{a_r}$ for  $\alpha= (\alpha_1, \dots, \alpha_r) \in \N^r$. 
Let $E \subset G$ be a finite subset. 
We construct a map $\Psi_E \colon A^G \to  (A^E)^{\N^r} $ by setting:     
\begin{equation} 
\label{e:psi-e}
\Psi_E(x) (\alpha)  \coloneqq  \left( \tau_\alpha (x) \right)\vert_E, \quad \text{ for every }x \in A^G \text{ and } \alpha \in \N^r. 
\end{equation}
\par 
\begin{definition} 
The \emph{column factorization} associated with the data $E$ and $\tau_1, \dots, \tau_r$ is defined by: 
\begin{equation}
\label{e:column-facto}
\Lambda(E ;  \tau_1, \dots, \tau_r) \coloneqq \Psi_E(A^G) \subset (A^E)^{\N^r}. 
\end{equation}
\end{definition}
\par 
For every set $X$, the shift action of the additive monoid $(\N^G, +)$ on the full shift $X^{\N^r}$ is given by $(\alpha x)(\beta) = x(\alpha + \beta) $ for all $x \in X^{\N^r}$ and $\alpha, \beta \in \N^r$. 
\par 
We have the following crucial property of column factorizations whose proof follows closely the proof of \cite[Theorem~7.2]{phung-shadowing}.

\begin{theorem} 
\label{l:closed-kurka-sft}
Let $E$ be a finite subset of a countable group $G$.  Let $A$ be a finite dimensional vector space and   let $\tau_1, \dots, \tau_r \colon A^G \to A^G$ be pairwise commuting linear NUCA with finite memory. 
Then $\Lambda(E ;  \tau_1, \dots, \tau_r) \subset (A^E)^{\N^r}$  is  a subshift of finite type, i.e., there exists a finite subset $
\Omega \subset \N^r$ such that $\Lambda(E ;  \tau_1, \dots, \tau_r) = \{x \in (A^{ E})^{\N^r}\colon (\alpha x)\vert_{\Omega} \in \Lambda_{\Omega} \text{ for all } \alpha \in \N^r\}$.  
\end{theorem}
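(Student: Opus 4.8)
The plan is to show that membership in $\Lambda(E;\tau_1,\dots,\tau_r)$ can be detected by a finite window of constraints, using the closed image property (Theorem~\ref{t:closed-image-linear}) together with a dimension‑stabilization argument for the nested family of ``column spaces'' indexed by $\N^r$. First I would fix finite memory sets for all the $\tau_i$, choose a symmetric finite $F\subset G$ containing $1_G$ which is a common memory set, and for a finite $D\subset\N^r$ let $E_D=\bigcup_{\alpha\in D}\supp(\tau_\alpha)\cdot E$ (a finite subset of $G$) so that $\Psi_E(x)\vert_D$ depends only on $x\vert_{E_D}$; this gives, for every finite $D$, an induced linear map $\Psi_{E,D}\colon A^{E_D}\to (A^E)^D$ whose image is $\Lambda_D:=\Lambda(E;\tau_1,\dots,\tau_r)_D$. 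Since $A$ is a finite dimensional vector space and everything in sight is linear, each $\Lambda_D$ is a finite dimensional vector space.

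The core of the argument is a stabilization step. For $\alpha\in\N^r$ and a finite $D$, consider the linear subspace $W_{\alpha,D}\subseteq \Lambda_{\{\alpha\}}\cong A^E$ of those values at coordinate $\alpha$ that extend, within $\Lambda$, to a prescribed value on $\alpha+D$; more usefully, track the ``deviation kernels'' $K_{\alpha,n}=\{x\in\Lambda : x\vert_{\alpha}=0,\ x\vert_{\alpha+B_n}=0\}$-type spaces, or rather the finite dimensional quotients $Q_n(\alpha)$ of germs at $\alpha$ modulo what is forced by the block of radius $n$ around $\alpha$. Because $\dim A^E<\infty$, these quotients form a descending chain of subspaces of a fixed finite dimensional space as $n$ grows, and by shift‑equivariance the relevant dimensions are bounded uniformly in $\alpha$; hence there is a single finite $\Omega\subset\N^r$ past which no further constraint is gained. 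Concretely, I would argue that if $x\in(A^E)^{\N^r}$ satisfies $(\alpha x)\vert_\Omega\in\Lambda_\Omega$ for all $\alpha$, then for every finite $D$ one has $x\vert_D\in\Lambda_D$: build an inverse system of nonempty affine preimages $(\Psi_{E,D})^{-1}(x\vert_D)\subset A^{E_D}$ over an exhaustion of $\N^r$ by finite sets, use the local compatibility granted by the window $\Omega$ to see each is nonempty, and invoke the Mittag‑Leffler/inverse‑limit argument for nonempty affine spaces (as in \cite[Lemma~3.1]{cscp-jpaa}, exactly as in the proof of Theorem~\ref{t:closed-image-linear}) to produce $c\in A^G$ with $\Psi_E(c)=x$.

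The two technical points I expect to be the main obstacles are: (a) making precise the claim that the family of finite dimensional ``constraint spaces'' stabilizes after a window of bounded size, which requires combining shift‑invariance of the $\tau_i$ with the finite dimensionality of $A^E$ and a careful bookkeeping of how constraints at different $\alpha$ interact (this is where the proof of \cite[Theorem~7.2]{phung-shadowing} is followed closely, and the monoid $\N^r$ rather than a group forces attention to the ``one‑sided'' geometry); and (b) verifying that the locally consistent data indexed by $\Omega$‑translates genuinely glue to a global configuration, for which the closed image property of linear NUCA with finite memory over finite dimensional vector space alphabets (Theorem~\ref{t:closed-image-linear}) is the essential input, since it guarantees $\Lambda$ itself is closed and the affine inverse‑limit is nonempty. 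Once both are in hand, the inclusion $\Lambda(E;\tau_1,\dots,\tau_r)\subseteq\{x:(\alpha x)\vert_\Omega\in\Lambda_\Omega\ \forall\alpha\}$ is immediate from the definitions and the reverse inclusion is the content of the gluing argument, so $\Lambda(E;\tau_1,\dots,\tau_r)$ is a subshift of finite type with defining window $\Omega$.
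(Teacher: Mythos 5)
Your overall architecture matches the paper's in one half and diverges in the other. Your gluing step (b) --- realizing a locally admissible $x \in (A^E)^{\N^r}$ as $\Psi_E(c)$ for some $c\in A^G$ by taking an inverse limit of nonempty affine preimages over an exhaustion of $\N^r$ and invoking \cite[Lemma~3.1]{cscp-jpaa} --- is essentially how the paper proves that $\Lambda$ is \emph{closed} in $(A^E)^{\N^r}$ (note, though, that Theorem~\ref{t:closed-image-linear} is not literally the input there: the paper reruns the same inverse-limit argument for the induced maps $f_n=\tau^+_{n,E}\circ p_n$, not for a single $\sigma_s$). Having shown that $\Lambda$ is a closed linear subshift, the paper then \emph{stops} and obtains the finite-type property by citing \cite[Theorem~6.5]{phung-shadowing}. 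You instead try to manufacture the window $\Omega$ by hand, and that is where the gap lies.

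The gap is your step (a), and it is not just a matter of ``making precise'' a correct idea. First, the chains you propose to stabilize live in the single-site space $A^{E}$ (values at a coordinate $\alpha$ forced by a radius-$n$ block around it). Even if all such chains stabilize uniformly, this does not yield $\Sigma_\Omega\coloneqq\{x : (\alpha x)\vert_\Omega\in\Lambda_\Omega\ \text{for all }\alpha\}\subseteq\Lambda$; that inclusion is exactly the assertion ``local admissibility implies global admissibility'', i.e.\ that $(\Psi_{E,D})^{-1}(x\vert_D)\neq\varnothing$ for \emph{every} finite $D$. But this is precisely what you then assume when you say ``use the local compatibility granted by the window $\Omega$ to see each is nonempty'': the nonemptiness of the preimages over large $D$ is what the finite window is supposed to deliver, and you have not derived it from the $\Omega$-constraints, so the argument as written is circular. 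Second, the appeal to shift-equivariance to get uniformity in $\alpha$ is delicate over the one-sided monoid $\N^r$: the shift maps $\Lambda$ into itself but need not be onto, so the pattern spaces $\Lambda_{\alpha+D}$ may strictly decrease as $\alpha$ grows. The mechanism that actually works (and is the content of the cited \cite[Theorem~6.5]{phung-shadowing}) is a Noetherianity/descending-chain argument on the pattern spaces $\Lambda_{F_n}$ themselves --- equivalently on the $k[x_1,\dots,x_r]$-submodule of $\bigoplus_{\alpha\in\N^r}(A^E)^*$ orthogonal to $\Lambda$ --- not a dimension count inside the fixed finite-dimensional space $A^E$; already for $r=2$ the latter does not substitute for the former. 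Either cite that theorem (as the paper does, after which only linearity, shift-invariance and closedness need to be checked) or supply its proof; your sketch currently does neither.
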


\begin{proof}
We claim that 
$\Lambda \coloneqq \Lambda(E ;  \tau_1, \dots, \tau_r) = \Psi_E(A^G)$ is a closed linear subshift of $(A^E)^{\N^r}$. 
First, note that $\Lambda$ is a linear subspace of $(A^E)^{\N^r}$ as    $\tau_\alpha$ is linear for all $\alpha \in \N^r$.  
Let $x \in A^G$,  $y= \Psi_E(x) \in \Lambda$, and $\alpha, \beta \in \N^r$. 
As $\tau_1, \dots, \tau_r$ are pairwise commuting, 
one has: 
\begin{align*} 
(\beta  y)( \alpha ) & = y ( \alpha + \beta ) 
= \Psi_E( x ) ( \alpha + \beta )  = \tau_{\alpha + \beta}(x)\vert_E 
=  \tau_\alpha( \tau_\beta  (x) )\vert_E \nonumber  
\\ &= \Psi_E(\tau_\beta(x))(\alpha) \nonumber
\end{align*} 
\par 
It follows that $\beta y = \Psi_E(\tau_\beta(x)) \in \Lambda$ and thus 
$\Lambda$ is a subshift of $(A^E)^{\N^r}$. 
\par 
For every finite subset $F \subset \N^r$, we choose a large enough finite subset $M_F \subset G$
such that $M_F$ is memory set of  $\tau_\alpha$ for all  
$\alpha \in F$. 
Consider the linear map $\tau_F \coloneqq \left(  \tau_{\alpha}\right)_{\alpha \in F} \colon A^G \to (A^F)^G$ defined by: 
\begin{equation} 
\label{e:tau-F-monoid-main}
\tau_F (x) (g) \coloneqq (\tau_\alpha(x)(g))_{\alpha \in F}, \quad \mbox{for all $x \in A^G$ and $g \in G$}. 
\end{equation} 
\par 
Let $z \in (A^E)^{\N^r}$ that belongs to the closure of $\Lambda$ 
in $(A^E)^{\N^r}$. 
 Let $(F_n)_{n \geq 1}$ be  an exhaustion of $\N^r$ consisting of finite subsets. Since $G$ is countable, we can suppose that  $(M_{F_n})_{n \geq 1}$ also forms 
 an exhaustion of $G$.    
 \par    
 For every $n \geq 1$, we define  
$\tau_{n, E}^{+} \colon A^{EM_{F_n}} \to (A^{F_n})^{E}$ by 
$\tau_{n, E}^{+}(c) \coloneqq \tau_{F_n}(x)\vert_E$ for all $c \in A^{EM_{F_n}}$ and $x \in A^G$ such that $x \vert_{EM_{F_n}}=c$. 
\par 
Let $(E_n)_{n \geq 1}$ be an exhaustion by finite subsets of $G$ such that 
$EM_{F_n} \subset E_n$ for all $n \geq 1$. Thus, we obtain the linear  projections 
 $p_n \colon A^{E_n} \to A^{M_{F_n}E}$. 
 Let us denote $f_n \coloneqq \tau_{n, E}^{+} \circ p_n\colon A^{E_n}  \to (A^{F_n})^{E}$ for every $n\geq 1$ and 
consider: 
 \begin{align} 
 \label{e:x-n-z} 
 X_{n} (z) \coloneqq \{ x \in A^{E_n} \colon  
  f_n (x) = z\vert_{F_n}   \} = f_n^{-1} (z\vert_{F_n}) \subset A^{E_n}. 
 \end{align}
 \par 
 As $z$ belongs to the closure of $\Lambda$ in   $(A^E)^{\N^r}$, 
 it is immediate from the definition of $\Lambda$ and \eqref{e:tau-F-monoid-main}, \eqref{e:x-n-z} 
 that $X_{n}(z)$ is nonempty for all $n \geq 1$.
 \par 
 Since  $p_n$ and  $\tau^+_{n,E}$ are  linear, so is  $f_n=\tau_{n, E}^{+} \circ p_n$. It follows from \eqref{e:x-n-z} that   
 $X_n(z)$ is an affine subspace of $A^{E_n}$ and $A^{E_n}$ 
 for all $n \geq 1$. 
We thus obtain an inverse system $(X_n(z))_{n \geq 1}$ 
whose transition maps $X_m(z) \to X_n(z)$ ($m \geq n \geq 1$)  
are the restrictions of the linear projections $\pi_{m,n} \colon A^{E_m} \to A^{E_n}$. 
We infer from \cite[Lemma~3.1]{cscp-jpaa} that  
there exists 
\[
x \in \varprojlim_{n \geq 1} X_{n}(z) \subset \varprojlim_{n\geq 1} A^{E_n} = A^G. 
\] 
\par 
By construction,  $ z= \Psi_{E}(x)\in \Lambda$. Thus,  $\Lambda$ is closed in $(A^E)^{\N^r}$ with respect to 
the prodiscrete topology. 
We conclude that $\Lambda$ is a closed linear subshift of $(A^E)^{\N^r}$. Hence,  $\Lambda$   is a subshift of finite type by \cite[Theorem~6.5]{phung-shadowing}.    
\end{proof}

\section{Shadowing property for linear NUCA} 
\label{s:main-result}

\begin{definition}
\label{d:pseudo-orbit} 
Let $X$ be a set. 
Let $S$ be a finitely generating set of a monoid $\Gamma$. 
Let $T$ be an action of $\Gamma$ on $X$  
and let $d$ be a metric on $X$. 
\begin{enumerate} [\rm (i)]
\item 
for $\delta >0$, a sequence  $(x_\tau)_{\tau\in \Gamma}$ in $X$ is called an \textit{$(S,d,\delta)$-pseudo-orbit} of $T$  if $d(T(\sigma,x_\tau), x_{\sigma \tau}) < \delta$ for all $\sigma \in S$ and $\tau\in \Gamma$.
 \item 
the action $T$ has the \textit{$(S,d)$-shadowing property}   
if for every $\varepsilon>0$, there exists $\delta>0$ such that every $(S,d,\delta)$-pseudo-orbit  $\{x_\tau\}_{\tau\in \Gamma}$ 
of $T$ is \emph{$\varepsilon$-shadowed} by some point $x$ of $X$, i.e., 
$d(T(\tau,x), x_\tau) < \varepsilon$ for all $\tau\in \Gamma$.
\end{enumerate} 
\end{definition}

We have the following intrinsic notion of shadowing property for actions of a finitely generated monoid on the full shift (cf. \cite[Definition~3.3]{phung-shadowing}). 

\begin{definition}
\label{d:shadow-general} 
Let $A$ be a set and let $G$ be a countable group. 
Let $\Gamma$ be a finitely generated monoid. 
An action $T$ of $\Gamma$ on $A^G$  has the \textit{shadowing property} 
if $T$ has the $(S,d)$-shadowing property for 
 every finitely generating set $S$ of $\Gamma$ 
and every standard metric $d$ on $A^G$. 
\end{definition}
\par 
Here,  a metric $d$ on $A^G$ is \emph{standard} if
there exists an exhaustion $(E_n)_{n \geq 0}$  of $G$ by finite subsets such that 
for every $x, y \in A^G$, 
\begin{align}
\label{e:metric-main}
d(x,y) \coloneqq 2^{-n}, \mbox{ where }  n \coloneqq \sup \{ k \in \N \colon x\vert_{E_k} =y\vert_{E_k} \}. 
\end{align}
\par

We establish the following  result  whose proof is,  \emph{mutatis mutandis}, the same as the proof of \cite[Theorem~8.1]{phung-shadowing}. Note that linear NUCA with finite memory are Lipschitz. Hence, \cite[Lemma~4.2]{phung-shadowing}, which is essential in the proof of \cite[Theorem~8.1]{phung-shadowing}, also applies  for linear NUCA. 

\begin{theorem} 
\label{t:main-shadow}
Let $G$ be a countable group. Let $A$ be a finite dimensional vector space.  
Let $\Gamma$ be a finitely generated abelian monoid of linear NUCA $A^G \to A^G$ with finite memory. Then the natural action of $\Gamma$ on $A^G$ has the shadowing property.   
\end{theorem}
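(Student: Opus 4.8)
The plan is to reduce the shadowing property for the abelian monoid $\Gamma$ to the subshift-of-finite-type property of the associated column factorizations established in Theorem~\ref{l:closed-kurka-sft}, following the strategy of \cite[Theorem~8.1]{phung-shadowing}. First I would fix a finite generating set $S = \{\tau_1, \dots, \tau_r\}$ of $\Gamma$ (using that $\Gamma$ is abelian, so every element is of the form $\tau_\alpha = \tau_1^{\alpha_1}\cdots\tau_r^{\alpha_r}$ for $\alpha \in \N^r$), and a standard metric $d$ on $A^G$ associated to an exhaustion $(E_n)_{n\geq 0}$. Given $\varepsilon > 0$, I would choose $n$ with $2^{-n} < \varepsilon$ and set $E = E_n$; then an $\varepsilon$-shadowing amounts to finding $x \in A^G$ with $\tau_\alpha(x)\vert_E = x_{\tau_\alpha}\vert_E$ for all $\alpha \in \N^r$, i.e., with $\Psi_E(x) = (x_{\tau_\alpha}\vert_E)_{\alpha \in \N^r}$ in the column factorization $\Lambda(E; \tau_1, \dots, \tau_r)$.

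The key point is that, since $\Lambda \coloneqq \Lambda(E; \tau_1, \dots, \tau_r)$ is a subshift of finite type in $(A^E)^{\N^r}$ by Theorem~\ref{l:closed-kurka-sft}, there is a finite window $\Omega \subset \N^r$ such that membership in $\Lambda$ is detected by checking all $\Omega$-translates. So the next step is to produce, from the pseudo-orbit $(x_\tau)_{\tau \in \Gamma}$, a point $z \in (A^E)^{\N^r}$ given by $z(\alpha) = x_{\tau_\alpha}\vert_E$ and show that if $\delta$ is chosen small enough (depending on $\varepsilon$, on $\Omega$, and on the common Lipschitz constant and memory sets of the finitely many generators $\tau_1, \dots, \tau_r$), then $z$ lies in $\Lambda$; equivalently, $z$ satisfies all the local constraints defining the SFT. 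Here I would invoke the linear/Lipschitz analogue of \cite[Lemma~4.2]{phung-shadowing} (noted in the statement to apply to linear NUCA with finite memory): a pseudo-orbit that is $\delta$-close under each generator remains uniformly close under every $\tau_\alpha$ on any fixed finite region, provided $\delta$ is small relative to the region, because the generators commute and are Lipschitz. This forces $z\vert_{\alpha + \Omega}$ to agree, on $E$, with the corresponding patch of a genuine orbit for every $\alpha$, hence $z \in \Lambda$ by the SFT characterization.

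Once $z \in \Lambda = \Psi_E(A^G)$, by definition there exists $x \in A^G$ with $\Psi_E(x) = z$, i.e., $\tau_\alpha(x)\vert_E = x_{\tau_\alpha}\vert_E$ for all $\alpha$; this gives $d(\tau_\alpha(x), x_{\tau_\alpha}) \leq 2^{-n} < \varepsilon$ for every $\alpha \in \N^r$, which is exactly the required $\varepsilon$-shadowing. Since $S$ and $d$ were arbitrary, the natural action of $\Gamma$ on $A^G$ has the shadowing property in the sense of Definition~\ref{d:shadow-general}.

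The main obstacle, and the step that requires genuine care rather than bookkeeping, is the passage from ``$\delta$-pseudo-orbit under the generators'' to ``$z$ satisfies the SFT constraints of $\Lambda$''. One must quantify, uniformly in $\alpha$, how an error of size $\delta$ under each $\tau_i$ propagates through compositions $\tau_\alpha$ when restricted to the finite region governing the SFT window $\Omega$: this is where commutativity of $\Gamma$ and the finiteness of the memory sets (so that only a bounded neighbourhood of $E$ ever matters for the constraints in $\alpha + \Omega$) are essential, and it is precisely the content of the linear version of \cite[Lemma~4.2]{phung-shadowing}. Everything else — choosing $E$ from $\varepsilon$, recognising $\varepsilon$-shadowing as membership of $z$ in $\Lambda$, and extracting $x$ from $\Psi_E$ — is formal, and the finite dimensionality of $A$ enters only through Theorem~\ref{l:closed-kurka-sft} (closedness of $\Lambda$ via the Mittag-Leffler argument) which we are entitled to assume.
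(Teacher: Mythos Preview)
Your proposal is correct and follows essentially the same approach as the paper: fix $E=E_{n_0}$ from $\varepsilon$, use Theorem~\ref{l:closed-kurka-sft} to get that $\Lambda(E;\tau_1,\dots,\tau_r)$ is an SFT with some finite window, choose $\delta$ via the Lipschitz constant of the $\tau_\alpha$ over that window (this is exactly what \cite[Lemma~4.2]{phung-shadowing} provides) together with a telescoping triangle-inequality estimate, verify that $z(\alpha)=x_{\tau_\alpha}\vert_E$ lies in $\Lambda$, and pull back via $\Psi_E$. The only minor imprecision is your description of \cite[Lemma~4.2]{phung-shadowing}: it gives the common Lipschitz constant for the finitely many $\tau_\alpha$ with $\alpha$ in the SFT window (not just the generators), and the actual error propagation from generators to arbitrary $\tau_\alpha$ in that window is a separate telescoping computation, but you clearly have the right picture.
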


\begin{proof} 
Suppose that $S= \{ \tau_1, \dots, \tau_r \}$ is a finite generating set 
of $\Gamma$.  
For every $n \geq 1$, let $F_n \coloneqq  \{(a_1, \dots, a_r) \in \N^r \colon a_1, \dots,a_r \leq n \}$ then $F_n \subset F_{n+1}$ and $\N^r  = \bigcup_{n \geq 1} F_n$. 
As $G$ is countable, it admits an exhaustion $(E_n)_{n \geq 0}$ of $G$ by  finite subsets such that   
$E_0= \varnothing$, $1_G \in E_1$, 
$E_n^2  \subset E_{n+1}$ for every $n \geq 0$.  
Thus, 
$E_n \subset E_{n+1}$ for every $n \geq 0$. Moreover, 
for every $M \subset G$ finite, there exists $n_0 \geq 1$ such that 
$E_n M \subset E_{n+1}$ for all $n \geq n_0$ (cf.~\cite[Definition~4.1]{phung-shadowing}).  
Let  $d$ be the standard metric on $A^G$ associated with $(E_n)_{n \geq 0}$.  
Let us fix $\varepsilon >0$ and choose  $n_0 >0$ such that 
$2^{-n_0} < \varepsilon$.   
\par 
The column factorization (cf.~Section~\ref{s:kurka-construction}) 
$\Lambda \coloneqq \Lambda(E ;  \tau_1, \dots, \tau_r)$ is 
a linear subshift of finite type of $ (A^{E_{n_0}})^{\N^r}$
by Theorem~\ref{l:closed-kurka-sft}. 
Thus, there exists  $N \geq 1$ such that 
$\Lambda = \{x \in (A^{E_{n_0}})^{\N^r}\colon (\alpha x)\vert_{F_N} \in \Lambda_{F_N} \text{ for all } \alpha \in \N^r\}$.   
\par 
Since $F_N$ is finite, \cite[Lemma~4.2]{phung-shadowing}  implies that there exists a finite constant $C \geq 1$ such that every 
$\tau_\alpha $, $\alpha  \in F_N$, 
 is $C$-Lipschitz, i.e.,
\begin{equation} 
\label{e:c-lipschitz-commun}
d(\tau_\alpha(x), \tau_\alpha(y)) \leq C d(x,y), \quad \text{ for every }x, y \in A^G.   
\end{equation}
\par 
Denote $\delta  \coloneqq  \frac{1}{2^{n_0}CNr}$.  
Let $(x_\tau)_{\tau \in \Gamma}$ be an $(S, d ,\delta)$-pseudo-orbit of the natural action $T$ of $\Gamma$ on $A^G$ where $T(\tau, x) \coloneqq \tau(x)$ for every $\tau \in \Gamma$ and $x \in A^G$.   
Then by definition, we have  for all $\sigma \in S$ and $\tau \in \Gamma$ that 
\begin{equation} 
\label{e:delta-1}
d(T(\sigma ,x_\tau), x_{\sigma \tau}) < \delta.  
\end{equation}
\par 
Let $\alpha= (a_1, \dots, a_r)  \in F_N$ and $\tau \in \Gamma$. 
We infer from the triangle inequality, the $C$-Lipschitz continuity of $\tau_{\beta}$ for every $\beta \in F_N$ (cf.~\eqref{e:c-lipschitz-commun}),  
and   the choice of $\delta$ that (see the proof of \cite[Theorem~8.1]{phung-shadowing}): 
 \begin{align} 
\label{e:delta-epsilon-shadowing} 
d ( T ( \tau_{\alpha} ,x_\tau), x_{\tau_\alpha \tau} )  
& \leq C \sum_{k=0}^{a_1-1}  d (   \tau_1(x_{\tau_1^{a_1-k-1}   \dots    \tau_r^{a_r} \tau }), x_{\tau_1^{a_1- k}   \dots    \tau_r^{a_r} \tau })  \,\,  + 
  \\
&  + C \sum_{k=0}^{a_2-1}  d ( \tau_2(x_{\tau_1^{a_2-k-1}   \dots    \tau_r^{a_r} \tau }), x_{\tau_2^{a_2- k}   \dots    \tau_r^{a_r} \tau })   \,\, +   \nonumber \\
&   \dots  && \nonumber \\
&   + C \sum_{k=0}^{a_r-1}  d (\tau_r(x_{\tau_r^{a_r-k-1} \tau }),  x_{\tau_r^{a_r-k} \tau }) \nonumber  \\ 
& \leq C (a_1 + \dots + a_r) \delta  \nonumber\\
& \leq CNr \frac{1}{2^{n_0}CNr} = 2^{-n_0}.  \nonumber
\end{align}  
\par 
Consider  $z \in (A^{E_{n_0}})^{\N^r}$ where 
$z(\alpha) \coloneqq x_{\tau_\alpha}\vert_{E_{n_0}}$ 
for every $\alpha \in \N^r$. 
We claim that $z \in \Lambda$. Indeed, let $\beta \in \N^r$. Then by \eqref{e:delta-epsilon-shadowing} and by the choice of $n_0$, 
we have for every $\alpha \in F_N$ that 
\begin{align} 
  (\tau_{\alpha} (x_{\tau_\beta}))\vert_{E_{n_0}} =  x_{\tau_\alpha \tau_\beta} \vert_{E_{n_0}}=x_{\tau_{\alpha + \beta}}\vert_{E_{n_0}}. 
\end{align} 
\par 
Thus, \eqref{e:psi-e} implies that 
$z\vert_{\beta + F_N} \in \Lambda_{\beta + F_N}$. Hence $z \in \Lambda$ and  
there exists $x \in A^G$ 
such that $ \Psi_{E_{n_0}} (x) = z$. 
We deduce from the definitions of $\Psi_{E_{n_0}}$ and $z$ 
that for all $\alpha \in \N^r$, we have 
$(\tau_\alpha(x))\vert_{E_{n_0}}= z(\alpha) = x_{\tau_\alpha}\vert_{E_{n_0}}$.  
\par 
Let $\tau\in \Gamma$, then $\tau = \tau_\alpha$ 
for some $\alpha \in \N^r$ and: 
\[
d(T(\tau,x), x_\tau) = d(\tau_\alpha(x), x_{\tau_\alpha}) \leq 2^{-n_0} < \varepsilon. 
\]
\par 
Thus,   
$x$  $\varepsilon$-shadows the $(S, d,\delta)$-pseudo-orbit $(x_\tau)_{\tau \in \Gamma}$ 
with respect to the standard metric $d$ (cf.~\eqref{e:metric-main}).  
The proof is complete.  
\end{proof}

We thus obtain the following direct consequence. 
 
\begin{corollary}
\label{c:shift}
Let $G$ be a finitely generated abelian group. Let $V$ be a finite dimensional vector space. 
Then the shift action of $G$ on $A^G$ has the shadowing property. 
\end{corollary}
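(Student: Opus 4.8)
The plan is to deduce Corollary~\ref{c:shift} from Theorem~\ref{t:main-shadow} by realizing the Bernoulli shift action of $G$ on $V^G$ as the natural action of a finitely generated abelian monoid of linear NUCA with finite memory.

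First, since $G$ is finitely generated and abelian, I would fix a finite \emph{symmetric} generating set $\{g_1, \dots, g_k\}$ of $G$ (closed under inversion). For each $h \in G$ let $T_h \colon V^G \to V^G$ be the shift $x \mapsto hx$. I would verify that $T_h$ is a linear NUCA with finite memory in the sense of Definition~\ref{d:most-general-def-asyn-ca}: take $M = \{h^{-1}\}$ and the constant configuration $s \in S^G$, $S = \LL(V^M, V)$, whose value $s(g)$ is the evaluation functional $v \mapsto v(h^{-1})$ on $V^M$; then for $x \in V^G$ and $g \in G$ one computes $\sigma_s(x)(g) = (g^{-1}x)(h^{-1}) = x(gh^{-1}) = x(h^{-1}g) = (T_h x)(g)$, where the third equality uses that $G$ is abelian. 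Since evaluation is linear, $T_h = \sigma_s$ is indeed a linear NUCA with finite memory. Moreover $T_h \circ T_{h'} = T_{hh'} = T_{h'h} = T_{h'} \circ T_h$, so these maps commute pairwise.

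Next, let $\Gamma$ be the submonoid of the monoid of self-maps of $V^G$ generated by $T_{g_1}, \dots, T_{g_k}$. Because the chosen generating set is symmetric, $\Gamma$ contains $T_h$ for every $h \in G$, and the assignment $h \mapsto T_h$ is a group isomorphism from $G$ onto $\Gamma$ (injectivity: $T_h = T_{h'}$ forces $h = h'$ by evaluating at a suitable configuration). Hence $\Gamma$ is a finitely generated abelian monoid of linear NUCA with finite memory, and under the identification $G \cong \Gamma$ the shift action of $G$ on $V^G$ is exactly the natural action of $\Gamma$ on $V^G$. Applying Theorem~\ref{t:main-shadow} to $\Gamma$, its natural action on $V^G$ has the shadowing property, i.e.\ the $(S,d)$-shadowing property for every finite generating set $S$ of $\Gamma$ and every standard metric $d$ on $V^G$. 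Since the isomorphism $G \cong \Gamma$ carries finite monoid generating sets of $G$ to finite monoid generating sets of $\Gamma$ and does not affect the standard metrics on $V^G$, this transports to the statement that the shift action of $G$ on $V^G$ has the shadowing property, which is Corollary~\ref{c:shift}.

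There is no serious analytic obstacle here, as the real work is already contained in Theorem~\ref{t:main-shadow} (and Theorem~\ref{l:closed-kurka-sft}). The only points requiring a little care are: checking that each individual shift $T_h$ literally matches Definition~\ref{d:most-general-def-asyn-ca} for a linear NUCA with finite memory — this is where commutativity of $G$ enters, to pass from $gh^{-1}$ to $h^{-1}g$ — and the bookkeeping needed because Theorem~\ref{t:main-shadow} is phrased for monoids of NUCA whereas $G$ is a group; choosing a symmetric generating set guarantees that $\Gamma$ is the full group of shifts, so the identification $G \cong \Gamma$ and the matching of generating sets and metrics is immediate.
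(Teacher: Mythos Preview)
Your proposal is correct and is precisely the intended derivation: the paper records Corollary~\ref{c:shift} as an immediate consequence of Theorem~\ref{t:main-shadow} without further argument, and your proof supplies exactly the missing verification that each shift $T_h$ is a linear NUCA with finite memory (using commutativity of $G$) and that a symmetric generating set makes the resulting monoid $\Gamma$ isomorphic to $G$, so that Theorem~\ref{t:main-shadow} transports to the shift action.
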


\bibliographystyle{siam}

\end{document}